\documentclass[12pt,reqno]{amsart}
\usepackage{mathtools}
\usepackage{empheq}
\usepackage{colonequals}

\usepackage{amsmath,amssymb,amsthm,microtype,geometry,mathscinet,enumitem,url,amsrefs}
\usepackage{parskip}
\usepackage{stackengine}
\usepackage{colonequals}
\usepackage{xcolor}
\usepackage[colorlinks=true,linkcolor=black,citecolor=black,urlcolor=black]{hyperref}

\usepackage[utf8]{inputenc}
\newcommand{\FS}{\operatorname{FS}}
\newcommand{\e}{\mathrm e}

\makeatletter
\NewCommandCopy\@@pmod\pmod
\DeclareRobustCommand{\pmod}{\@ifstar\@pmods\@@pmod}
\def\@pmods#1{\mkern4mu({\operator@font mod}\mkern 6mu#1)}
\makeatother

\numberwithin{equation}{section}

\DeclareMathAlphabet{\curly}{U}{rsfs}{m}{n}
\newtheorem{thm}{Theorem}[section]
\newtheorem{cor}[thm]{Corollary}
\newtheorem{lem}[thm]{Lemma}
\newtheorem{prop}[thm]{Proposition}
\theoremstyle{remark}
\newtheorem{rmk}{Remark}[section]

\usepackage{enumitem}

\def\Cc{\mathcal{C}}

\def\Bb{\mathcal{B}}

\def\Ff{\mathcal{F}}

\def\N{\mathbb{N}}
\def\Q{\mathbb{Q}}

\def\Pp{\mathcal{P}}
\def\Qq{\mathcal{Q}}

\def\E{\mathbb{E}}

\def\Z{\mathbb{Z}}

\def\PP{\mathbb{P}}
\def\R{\mathbb{R}}

\def\T{\mathbb{T}}

\renewcommand\subset\subseteq

\title[Strongly complete sets and a conjecture of Erd\H{o}s]{Strongly Complete sets and a conjecture of Erd\H{o}s}
\author{Steve Fan}
\address{Department of Mathematics\\ University of Georgia\\ Athens, GA 30602}
\email{Steve.Fan@uga.edu}

\subjclass[2020]{Primary 11B13; Secondary: 11B75, 11J71}
\keywords{Subset sums, complete sets, Diophantine approximation}

\begin{document}

\begin{abstract}
A set $A\subseteq\N$ is called \textit{complete} if every sufficiently large
integer can be written as a sum of distinct elements of $A$, and \textit{strongly complete} if it remains complete after finitely many elements are deleted. Building on recent work of Bergelson and Simmons and that of Griesmer, we establish a new strong-completeness criterion exploiting a three-component partition of a given set. As an application, we show that $A$ is strongly complete whenever
\[
  \big|A\cap(2^k,2^{k+1}]\big|\ge5
\]
for every sufficiently large $k\in\N$, and 
\[
  \sum_{a\in A}\|a\theta\|=\infty,
  \quad\forall\theta\in\R\setminus\Z.
\]
In particular, this resolves a 1961 conjecture of Erd\H{o}s. The new strong-completeness criterion also enables us to make progress on a 1996 problem of Burr, Erd\H{o}s, Graham, and Li concerning strong completeness of mixed power sets by refining a previous result of Bergelson and Simmons. Besides, we study the polynomially perturbed ray set
\[
 \{\lfloor t\alpha^n\rfloor,\lfloor t\alpha^n\rfloor+P(n):n\in\N\},
\]
which combines the polynomial set $\{P(n):n\in\N\}$ and the single-ray set $\{\lfloor t\alpha^n\rfloor:n\in\N\}$ both previously considered by Graham, and show that it is strongly complete for any $t>0$ and $\alpha\in(0,2)$ and any primitive integer-valued polynomial $P$. The machinery developed for the proof of this result also yields other interesting applications.
\end{abstract}

\maketitle
\tableofcontents

\section{Introduction}\label{S:Intro}

A classic problem in additive number theory concerns representing every sufficiently large integer as a sum of distinct elements of a given subset of integers. Following Burr and Erd\H{o}s \cite{BurrErdos81}, we call a set $A\subseteq\N$ \emph{complete} if every sufficiently large positive integer can be represented as a sum of distinct elements of $A$. Thus, a complete set is a particularly rigid form of additive basis of which each element may be used as a summand at most once. As in \cite[Definition 1.1]{BergelsonSimmons17} we define 
\begin{equation}\label{eq:FS(A)}
\FS(A):=\left\{\sum_{a\in F}a:\emptyset\ne F\subseteq A\text{ finite}\right\},    
\end{equation}
which records all the nonempty subset sums of $A$. Then $A$ is complete if $\N\setminus\FS(A)$ is finite. Familiar examples of a complete set include 
\begin{enumerate}
    \item binary basis $\{2^k:k\ge 0\}$; \label{item:b-adic}
    \item $d\N\cup\{1,\ldots,d-1\}$, where $d\in\N_{\ge2}$;\label{item:dN}
    \item integer-valued polynomials $\{|P(n)|:P(n)\ne0\}$, where $P:\Z\to\Z$ is a nonconstant polynomial such that $|\{n\in\Z/p\Z:P(n)\equiv 0\pmod* p\}|<p$ for every prime $p$ \cite{Graham64a};
    \item odd primes \cite{BergelsonSimmons17};
    \item mixed bases $\{a^kb^{\ell}:k,\ell\in\Z_{\ge0}\}$, where $a,b\in\N_{\ge2}$ are coprime \cite{Birch59}.
\end{enumerate}
More recent examples and results can be found in \cite{BergelsonSimmons17}. We also say that $A$ is \emph{strongly complete} if $A\setminus B$ is complete for every finite subset $B\subseteq A$. Thus, a strongly complete set is automatically complete. On the other hand, a simple modular arithmetic argument shows that the sets in \eqref{item:b-adic} and \eqref{item:dN} are not strongly complete. 

The notion of strong completeness is also stronger in the sense that it automatically grants abundance of representations. Given $A\subseteq\N$, denote by $q_A(n)$ the number of representations of $n$ as sums of distinct elements of $A$. Then $A$ is strongly complete if and only if for every finite $F\subseteq A$, 
\begin{equation}\label{eq:q_A}
 \lim_{n\to\infty}q_{A\setminus F}(n)=\infty.
\end{equation}
The sufficiency part is trivial. For the necessity part, fix $k\in\N$ and an arbitrary $k$-element subset $B\subseteq A\setminus F$. Completeness of $C=A\setminus(F\cup B)$ implies that if $n$ is sufficiently large, then
\[
 n-\sum_{b\in E}b\in\FS(C)
\]
for each $E\subseteq B$. There are $2^k$ choices for $E$, so $q_{A\setminus F}(n)\ge 2^k$. Since $k$ is arbitrary,
\eqref{eq:q_A} follows. As every representation of an integer $n\le x$ uses only elements of $A\cap[1,x]$, this argument also gives
\[2^{|A\cap[1,x]|}\ge\sum_{n\le x}q_{A}(n)\ge 2^kx +O_k(1),\]
whence
\[\lim_{x\to\infty}\left(|A\cap[1,x]|-\log_2 x\right)=\infty,\]
where $\log_c x:=\log x/\log c$ for any $c,x>0$ with $c\ne1$. The asymptotic analysis of $q_A(n)$ for general sets $A$, pioneered by Roth and Szekeres \cite{RothSzekeres54}, is a classical topic in the theory of integer partitions. 

In 1960 Cassels \cite{Cassels60} established a general completeness criterion combining local abundance in dyadic intervals with divergence of a Diophantine series. He proved that a set $A\subseteq\N$ is complete if we have both the dyadic growth condition
\[\lim_{x\to\infty}\frac{|A\cap(x,2x]|}{\log\log x}=\infty\]
and the $L^2$-divergence 
\begin{equation}\label{eq:L2-divergence}
\sum_{a\in A}\|a\theta\|^2=\infty,
  \quad\forall\theta\in\T\setminus\{0\}.
\end{equation}
Here for every $x\in\R$, $\|x\|$ denotes the distance from $x$ to the nearest integer, or equivalently, the distance from $x$ to $0$ when $x$ is viewed as a point on the torus $\T=\R/\Z$. Cassels obtained his result by applying the Hardy--Littlewood circle method to the generating function of $q_A(n)$. He also remarked that it would be possible to weaken his hypotheses by refining the integral estimates in his proof, but he did not provide details. More recently, Bergelson and Simmons \cite{BergelsonSimmons17} proved an interesting completeness result which is close to being a generalization of Cassels's theorem.

Erd\H{o}s \cite[p. 231]{Erdos61} conjectured in 1961 that Cassels's hypotheses can be relaxed to the arbitrary dyadic growth condition
\begin{equation}\label{eq:erdos-growth}
  \lim_{x\to\infty}\big|A\cap(x,2x]\big|=\infty,
\end{equation}
combined with the $L^1$-divergence 
\begin{equation}\label{eq:L1-divergence}
  \sum_{a\in A}\|a\theta\|=\infty,
  \quad\forall\theta\in\T\setminus\{0\}.
\end{equation}
This is listed as Problem \#254 in Bloom’s online Erd\H{o}s problem list \cite{Bloom254}. It is not hard to see that every strongly complete set $A\subseteq\N$ satisfies \eqref{eq:L1-divergence}. Indeed, if there is some $\theta\in\T\setminus\{0\}$ such that the corresponding series converges, then we can find $N_0\in\N$ such that
\[\sum_{a\in A\cap(N_0,\infty)}\|a\theta\|<\frac{\|\theta\|}{2}.\]
Since $A$ is strongly complete, every sufficiently large integer belongs to $\FS(A\cap(N_0,\infty))$. By the triangle inequality,
\[
 \|\theta\|\le\|n\theta\|+\|(n+1)\theta\|<\frac{\|\theta\|}{2}+\frac{\|\theta\|}{2}=\|\theta\|
\]
whenever $n\in\N$ is sufficiently large, which is impossible.

Our first main result provides a strong-completeness criterion which only requires $A$ to have a few elements locally.
\begin{thm}\label{thm:main}
Let $\rho>1$ and define
\begin{equation}\label{eq:constants}
  u_\rho=\left\lceil\rho(\rho-1)\right\rceil,
  \qquad
  v_\rho:=\left\lceil\frac{\rho^3}{\rho+1}\right\rceil,
 \qquad 
 M_\rho:=\min\left\{2u_\rho+1,2v_\rho\right\}.
\end{equation}
Let $M\ge M_{\rho}$ be an integer, and suppose that $A\subseteq\N$ satisfies \eqref{eq:L1-divergence} and the inequality 
\begin{equation}\label{eq:growth}
\big|A\cap(\rho^k,\rho^{k+1}]\big|\ge M    
\end{equation}
for every sufficiently large integer $k$. Then
\[
\lim_{n\to\infty} \frac{q_{A\setminus F}(n)}
 {n^{(M-M_\rho)\log_\rho2}}=\infty
\]
for every finite subset $F\subseteq A$. In particular, $A$ is strongly complete.
\end{thm}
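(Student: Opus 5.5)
This is a plan only. The idea is to split $A\setminus F$ into three disjoint pieces $A=B\cup C\cup D$. Here $D$ is a sparse infinite subset that still carries the $L^1$-divergence, $C$ is a ``locally dense'' piece taking $u_\rho$ or $v_\rho$ elements per $\rho$-adic block, and $B$ holds the remaining $M-M_\rho$ elements per block. The piece $B$ serves only to multiply representations: every subset sum of $B\cap[1,\epsilon n]$ can be subtracted from $n$. Since $|B\cap[1,x]|\ge (M-M_\rho)\log_\rho x+O(1)$, this produces at least $c\,n^{(M-M_\rho)\log_\rho 2}$ shifts. So it suffices to show that $C\cup D$ is strongly complete with $q_{C\cup D}(m)\to\infty$, uniformly for $m$ in a range $[n/2,n]$; the limit statement then follows after letting $\epsilon$ and the number of discarded elements vary. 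Deleting the finite set $F$ only shifts the threshold $k$ in \eqref{eq:growth}, so we may assume $F=\emptyset$ throughout.

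\textbf{Step 1 (the dense piece $C$).} From each block $(\rho^k,\rho^{k+1}]$ choose $m$ elements, with $m=u_\rho$ or $m=v_\rho$ chosen according to which one realizes $M_\rho$. The constants are tuned so that $\FS(C\cap(\rho^{K},\rho^{L}])$ is ``$\rho$-dense at scale'' in the following sense. Every interval of length comparable to $\rho^{K}$ inside the range of these subset sums contains a sum, and the sums form a chain with gaps at most the largest element below. This follows by a greedy induction: the sum of the elements in the blocks below $k$ is at least $m\rho^{k-1}\cdot\frac{1}{1-\rho^{-1}}$ roughly, and this must be at least the gap $\rho^{k+1}$ to the next element. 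The conditions $m\ge\rho(\rho-1)$ and, in the alternative ``two groups'' arrangement, $2m\ge 2\rho^3/(\rho+1)$ are exactly what make this Brown-type criterion ($a_{j+1}\le 1+\sum_{i\le j}a_i$ up to a bounded error) hold in the appropriate scaled form. The conclusion of this step is that $\FS(C)$ is syndetic, with gaps bounded by $O(\rho^{K})$ once the small elements up to $\rho^K$ are discarded, and that it meets every residue pattern at bounded cost.

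\textbf{Step 2 (the Diophantine piece $D$).} I would invoke the Bergelson--Simmons/Griesmer framework. The $L^1$-divergence \eqref{eq:L1-divergence} for $A$ passes to some part of $A$; thinning $A$ by taking one element per block keeps divergence for a suitable choice, by a pigeonhole over the $M$ elements since the sum splits into $M$ subseries. By the Bergelson--Simmons result, $L^1$-divergence implies that $\FS(D')$ is dense in every Bohr-type sense: for every finite $D'$-tail, $\FS$ hits every residue class modulo every $q$ and has no Bohr obstruction. Combined with Step 1, $\FS(D)+\FS(C)$ covers all large integers. The interval-filling from $C$ handles archimedean gaps up to $O(\rho^K)$, and $D$ supplies sums hitting each short window of bounded length, i.e.\ it fills the residual bounded gaps. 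Griesmer's lemma, that a set with $\FS$ syndetic in every Bohr set plus divergence is thick, is the tool I would quote here.

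\textbf{Main obstacle.} The hardest step is to make Steps 1 and 2 compatible without double-using elements and while keeping the bound uniform in $n$. Concretely, one must show that $\FS(D)$ contains, inside every window $[x,x+H]$ with $H$ fixed (depending on the gap size from Step 1), elements using only $D\cap[1,\delta x]$. I would attack this first via Griesmer's Bohr-recurrence argument applied to finite truncations, extracting a finite $H$ by compactness of $\T^d$, and only then carry out the greedy construction for $C$ at scales above $\delta x$.
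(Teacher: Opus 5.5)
There is a genuine gap in the covering step of Step 2. In what follows, your $C$ plays the role of the paper's $B_1$ and your $D$ that of the paper's $C$.

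A single piece $C$ with $\Delta(C)<\infty$, together with a Diophantine piece $D$, does not give cofiniteness. The Griesmer machinery (Lemma \ref{Lem:thick-sumset}, fed with the Hartman uniformly distributed measures on $\FS(D)$ that Lemma \ref{lem:Hartman-H_1-H_2} supplies when $H_1(D)=\{0\}$ and $H_2(D)$ is countable) only shows that $\FS(C)+\FS(D)$ is \emph{thick}. To reach cofiniteness you must add a further \emph{syndetic} set $\FS(B_2)$, built from elements disjoint from $C\cup D$. Your mechanism, in which $D$ ``hits every short window'', cannot replace this, because syndetic plus syndetic need not be cofinite ($3\N+3\N$). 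Your $D$ cannot even do that much. With one element per block, $|\FS(D)\cap[1,x]|\le 2^{\log_\rho x+O(1)}=O(x^{\log_\rho 2})=o(x)$ for $\rho>2$. Also $\FS(D\cap[1,\delta x])\subseteq[1,O(\delta x)]$ never meets $[x,x+H]$.

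This missing second reservoir is exactly what $M_\rho$ pays for. The paper partitions $A=B_1\cup B_2\cup C$ with $\Delta(B_1),\Delta(B_2)<\infty$ (Theorem \ref{thm:strong-3set}). The value $2u_\rho+1$ is $u_\rho$ elements per block for each of $B_1,B_2$ plus one for $C$. The value $2v_\rho$ comes from giving $B_1,B_2$ alternately $v_\rho$ and $v_\rho-1$ elements, so that $v_\rho+(v_\rho-1)\rho\ge\rho(\rho^2-1)$ and Lemma \ref{lem:Delta(S)} applies with $r=2$. This, not a single $C$ with $v_\rho$ elements, is the real ``two groups'' arrangement. Your allocation (about $u_\rho+1$ per block) would prove the theorem with $u_\rho+1$ in place of $M_\rho$, e.g.\ $3$ instead of $5$ for $\rho=2$. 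That is precisely the two-component upgrade the paper says it cannot reach; with that budget one only gets that $\FS(A\setminus F)$ is thick and syndetic (Proposition \ref{prop:strong-precomplete}).

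The construction of $D$ is also unjustified. For each fixed $\theta\neq0$, one of the $M$ subseries diverges, but which one depends on $\theta$, and there are uncountably many $\theta$. Moreover, if $C$ is chosen first, the leftover elements may carry no divergence at all. The paper first reduces to countably many frequencies and then chooses the reservoir elements \emph{so as to protect} divergence in the Diophantine piece:
\begin{itemize}
\item An arbitrary one-per-block selection from the odd blocks has bounded ratios, hence countable $H_1$ (Lemma \ref{lem:bounded-ratio}).
\item Lemma \ref{lem:countable-deletion} then deletes $M_\rho-1$ elements from each even block while keeping divergence at those countably many $\theta$.
\item The now countable $H_1(C_E)$ is handled the same way on the odd blocks.
\end{itemize}
This yields $C$ with $H_1(C)=\{0\}$ and, by bounded ratios, countable $H_2(C)$. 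The deleted elements are then split between $B_1$ and $B_2$.

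Your final counting step with the extra $M-M_\rho$ elements per block is fine and matches Lemma \ref{lem:q_A}. However, it presupposes strong completeness of the $M_\rho$-part, which your plan does not establish.
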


Theorem \ref{thm:main} will be proved in Section \ref{S:Pf-main} as an application of a new three-component strong-completeness criterion (Theorem \ref{thm:strong-3set}) developed in Section \ref{S:strong-3set}. Our argument is flexible enough to yield an extension of this theorem by isolating the role played by the $\rho$-adic intervals in the proof. See Theorem \ref{thm:ordered-blocks}. 

It is worth noting that $M_{\rho}$ attains its minimum $2$ for every $1<\rho\le \rho_0$, where
\[
\rho_0:=\sqrt[3]{\frac{1}{2}+\frac{\sqrt{69}}{18}}+\sqrt[3]{\frac{1}{2}-\frac{\sqrt{69}}{18}}=1.324717957\ldots
\]
is the unique real solution to $\rho^3=\rho+1$. Taking $\rho=2$ in Theorem \ref{thm:main} confirms Erd\H{o}s's conjecture above in a considerably strong way. 

\begin{cor}\label{cor:erdos61}
Every set $A\subseteq\N$ satisfying \eqref{eq:L1-divergence} and the inequality 
\[\big|A\cap(2^k,2^{k+1}]\big|\ge5\] 
for every sufficiently large integer $k$ is strongly complete.
\end{cor}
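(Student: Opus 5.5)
This follows directly from Theorem \ref{thm:main} with $\rho=2$, so the plan is to compute the constants in \eqref{eq:constants} at $\rho=2$ and check that the hypotheses of the corollary match those of the theorem with $M=5$.

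First, $u_2=\lceil 2\cdot 1\rceil=2$ and $v_2=\lceil 8/3\rceil=3$. Hence
\[
M_2=\min\{2u_2+1,\,2v_2\}=\min\{5,6\}=5.
\]
Taking $M=5$, we have $M\ge M_2$. The growth hypothesis \eqref{eq:growth} with $\rho=2$ reads $|A\cap(2^k,2^{k+1}]|\ge 5$ for all sufficiently large $k$, which is exactly what the corollary assumes. The divergence hypothesis \eqref{eq:L1-divergence} is assumed verbatim.

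Theorem \ref{thm:main} therefore gives
\[
\lim_{n\to\infty}\frac{q_{A\setminus F}(n)}{n^{0}}=\lim_{n\to\infty}q_{A\setminus F}(n)=\infty
\]
for every finite $F\subseteq A$. By the equivalence discussed around \eqref{eq:q_A}, this is the same as saying that $A$ is strongly complete. The theorem also states strong completeness directly.

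There is no real obstacle here. The only step needing care is the value $v_2=\lceil 8/3\rceil=3$, which makes the second term $2v_2=6$ and so leaves $2u_2+1=5$ as the minimum. The substantive work lies in Theorem \ref{thm:main} itself, which we take as given.
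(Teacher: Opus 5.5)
Your proposal is correct and is exactly how the paper obtains this corollary: it specializes Theorem \ref{thm:main} to $\rho=2$, where $u_2=2$ and $v_2=\lceil 8/3\rceil=3$, so $M_2=\min\{5,6\}=5$ and the hypotheses coincide with $M=5$. Your arithmetic and your reading of the conclusion (exponent $0$, hence $q_{A\setminus F}(n)\to\infty$, equivalently strong completeness) are both right.
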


Denote by $M_{\rho}^{\ast}$ the least positive integer such that every set $A\subseteq\N$ satisfying \eqref{eq:L1-divergence} and 
\begin{equation}\label{eq:M_rho*}
 \big|A\cap(\rho^k,\rho^{k+1}]\big|\ge M_{\rho}^{\ast}   
\end{equation}
for every sufficiently large integer $k$ is strongly complete. The exact value of $M_2^{\ast}$ is connected with a question of Graham \cite{Graham71}, also recorded in \cite[p. 58]{ErdosGraham80}, which asks whether 
\begin{equation}\label{eq:A_{alpha,beta}}
A_{\alpha,\beta}:=\{\lfloor 2^k\alpha\rfloor,\lfloor 2^k\beta\rfloor:k\ge0\}\setminus\{0\}    
\end{equation}
is complete with $\alpha,\beta>0$ such that $\alpha/\beta\notin\Q$; see also Bloom's Problem \#354 \cite{Bloom354}. For any $\alpha,\beta\in\R\setminus\{0\}$, we write $\alpha\sim\beta$ if $\alpha$ and $\beta$ are \textit{dyadically equivalent}, meaning $\alpha/\beta=2^n$ for some $n\in\Z$. We call $\alpha$ a \textit{dyadic rational} if $\alpha\sim n$ for some $n\in\Z\setminus\{0\}$. With small modifications, Hegyv\'ari's argument \cite{Hegyvari89} shows that for any pairwise dyadically nonequivalent $\alpha,\beta,\gamma>0$, the set
\[A_{\alpha,\beta,\gamma}:=\{\lfloor 2^n\alpha\rfloor,\lfloor 2^n\beta\rfloor,\lfloor 2^n\gamma\rfloor:n\ge0\}\setminus\{0\}\]
is strongly complete if and only if at least one of $\alpha,\beta,\gamma>0$ is not a dyadic rational. In the same spirit, he conjectured that $A_{\alpha,\beta}$ is complete if $\alpha\not\sim\beta$ and at least one of $\alpha,\beta$ is not a dyadic rational and confirmed the case where exactly one of $\alpha,\beta$ is a dyadic rational. It turns out that the optimal scenario $M_2^{\ast}=2$ would imply the full Hegyv\'ari's conjecture. We include a short proof of this in Remark \ref{rmk:M_2*}. More recently, Hegyv\'ari \cite{Hegyvari24} proved that 
\[
 \FS_0(\lfloor 2^k\alpha\rfloor)+\FS_0(\lfloor 2^k\alpha\rfloor)=\Z_{\ge0}
\]
for every $\alpha\in(0,2)$, where $\FS_0(S):=\FS(S)\cup\{0\}$.

% In Section \ref{S:3dyadic-rays}, we prove the following theorem which asserts that the triple-dyadic-ray set
% \[A_{\alpha,\beta,\gamma}:=\{\lfloor 2^n\alpha\rfloor,\lfloor 2^n\beta\rfloor,\lfloor 2^n\gamma\rfloor:n\ge0\}\setminus\{0\}\]
% is strongly complete under the Hegyv\'{a}ri-type condition.

% \begin{thm}\label{thm:Hegyvari-3}
% Let $\alpha,\beta,\gamma>0$ be pairwise dyadically nonequivalent. Then $A_{\alpha,\beta,\gamma}$ is strongly complete if and only if at least one of $\alpha,\beta,\gamma$ is not a dyadic rational. 
% \end{thm}

% When one of the three parameters $\alpha,\beta,\gamma$ is a dyadic rational and one of them is not, the conclusion of Theorem \ref{thm:Hegyvari-3} already follows from Hegyv\'{a}ri’s mixed two-ray theorem [19]; see also [20, p. 116]. The new content of Theorem \ref{thm:Hegyvari-3} is therefore the case in which none of $\alpha,\beta,\gamma$ is a dyadic rational.

As an application of Theorem \ref{thm:main}, our next result, which will be proved in Section \ref{S:Pf-main} as well, shows that a set $A\subseteq\N$ satisfies the Erd\H{o}s conditions \eqref{eq:erdos-growth} and \eqref{eq:L1-divergence} if and only if it admits a partition into countably many strongly complete sets with prescribed local growth rates. This may be compared with the results of Erdős and Nathanson in \cite{ErdosNathanson88} on partitioning a set into countably many asymptotic bases of a fixed order and with the recent work of Conlon, Fox, and Pham \cite{ConlonFoxPham21} on Ramsey-completeness and coloring results.

\begin{thm}\label{thm:complete-decomposition}
Fix $\rho>1$, let $A\subseteq\N$, and put
\[
 X_k:=A\cap(\rho^k,\rho^{k+1}] \qquad\text{and}\qquad m_k:=|X_k|
\]
for every $k\ge0$. Then the following are equivalent.
\begin{enumerate}
 \item\label{item:erdos-properties}
 $A$ satisfies \eqref{eq:L1-divergence} and $m_k\to\infty$ as $k\to\infty$.
 \item\label{item:every-vector}
 For every sequence $\{\pi_j\}_{j\ge1}$ of positive numbers with $\sum_j\pi_j=1$, there is a partition
 \begin{equation}\label{eq:complete-partition}
 A=\bigcup_{j\ge1}A_j    
 \end{equation}
 into strongly complete sets such that for every $j\ge1$ and $k\ge0$,
 \begin{equation}\label{eq:A_j-growth}
   \big||A_j\cap X_k|-\pi_jm_k\big|<1.
 \end{equation}
 \item\label{item:some-vector}
 There are a sequence $\{\pi_j\}_{j\ge1}$ of positive numbers with $\sum_j\pi_j=1$ and a partition of $A$ into strongly complete sets of the form \eqref{eq:complete-partition} such that \eqref{eq:A_j-growth} holds.
\end{enumerate}
\end{thm}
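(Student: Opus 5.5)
We prove \eqref{item:erdos-properties}$\Rightarrow$\eqref{item:every-vector}$\Rightarrow$\eqref{item:some-vector}$\Rightarrow$\eqref{item:erdos-properties}. The middle implication is trivial. The last implication is short. If $A=\bigcup_j A_j$ with each $A_j$ strongly complete, then $A_1$ satisfies \eqref{eq:L1-divergence}, as shown in the introduction. Since $A_1\subseteq A$, the series $\sum_{a\in A}\|a\theta\|$ dominates $\sum_{a\in A_1}\|a\theta\|$, so $A$ satisfies \eqref{eq:L1-divergence} too. For the growth condition, fix any $J$. Each $A_j$ with $j\le J$ is strongly complete, so $|A_j\cap(\rho^k,\rho^{k+1}]|\ge1$ for all large $k$. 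Otherwise there would be arbitrarily long $\rho$-adic gaps, and deleting finitely many elements would leave a set too sparse to be complete. Concretely, if $A_j\cap(\rho^k,\rho^{k+1}]=\emptyset$ for infinitely many $k$, delete all elements up to some fixed bound; the integers just above $\rho^k$ then cannot be formed from elements $\le\rho^k$ once the sum of the remaining small elements is too small. Equivalently, and more simply, \eqref{eq:A_j-growth} gives $m_k>(1-1)/\pi_j$ trivially, so the direct route is this: $m_k\ge |A_j\cap X_k|$ and the $A_j$ are disjoint, so $m_k\ge\sum_{j\le J}|A_j\cap X_k|$. To conclude $m_k\to\infty$, I would use the lower bound $|A\cap[1,x]|-\log_2x\to\infty$ for each strongly complete $A_j$ and sum over $j\le J$. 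This gives $|A\cap[1,x]|\ge J\log_2 x - O_J(1)$. That alone bounds averages, not each $m_k$, so I would combine it with \eqref{eq:A_j-growth}: $|A_j\cap X_k|<\pi_jm_k+1$. If $m_k\le C$ for infinitely many $k$, then for $j$ with $\pi_j C<1$ we get $|A_j\cap X_k|\le 1$. I expect the cleanest fix is different: take $\pi_j<1/C$, so that $|A_j\cap X_k|\in\{0,1\}$ for all large $k$. Then $A_j$ has at most one element per $\rho$-adic block on the bad $k$'s. Passing to a further $j$ with $\pi_j m_k<1$ shows most $A_j$ have at most one element in each bad block. Summing $J\log_2x$ over many such sparse $A_j$ contradicts $m_k\le C$ only on average, so I would argue by contradiction more carefully: if $m_k\le C$ infinitely often, pick $J>C$. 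Among $A_1,\dots,A_J$, some $A_j$ misses $X_k$ for infinitely many $k$. Completeness of $A_j$ minus its elements below a large bound then fails near $\rho^{k}$ by a counting argument using $q_{A_j\setminus F}(n)\to\infty$ together with the dyadic-type size bounds. I flag this last step as needing care when $\rho<2$.

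The main work is \eqref{item:erdos-properties}$\Rightarrow$\eqref{item:every-vector}. The plan is to first partition each block $X_k$ by rounding, so that \eqref{eq:A_j-growth} holds. Since $\sum_j\pi_jm_k=m_k$, a standard rounding assigns integers $n_{j,k}\in\{\lfloor\pi_jm_k\rfloor,\lceil\pi_jm_k\rceil\}$ summing to $m_k$, with only finitely many nonzero. The real content is choosing which elements go to which $A_j$ so that every $A_j$ satisfies \eqref{eq:L1-divergence}. Then, since $\pi_jm_k\to\infty$ gives $|A_j\cap X_k|\ge M_\rho$ eventually, Theorem \ref{thm:main} yields strong completeness of $A_j$.

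The obstacle is distributing $L^1$-divergence over all $\theta$ simultaneously. I would attempt a randomized assignment: inside each $X_k$, assign the elements to classes by a uniformly random arrangement with the prescribed sizes $n_{j,k}$. Each element lands in $A_j$ with probability about $\pi_j$, with independence across blocks. Then $\E\sum_{a\in A_j}\|a\theta\|\approx\pi_j\sum_{a\in A}\|a\theta\|=\infty$, but we need this almost surely for all uncountably many $\theta$ at once. For that, I would use a deterministic reduction instead. Split $A$'s divergence into a greedy structure: enumerate a countable dense set is insufficient, so instead interleave. Process the blocks in order and use a diagonal scheme over $j$. For stage $s$, with $j$ ranging over a sequence visiting each index infinitely often, give $A_j$ a finite chunk of consecutive blocks $X_k$, $K_s\le k<K_{s+1}$, in bulk (respecting quotas up to error absorbed by swaps within blocks). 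The chunk is chosen so that $\sum_{a}\|a\theta\|\ge1$ for all $\theta$ with $\|\theta\|\ge\delta_s$. This is possible by compactness: the tails of \eqref{eq:L1-divergence} diverge pointwise, and the partial sums are continuous. For $\theta$ near $0$, divergence comes cheaply, since $\|a\theta\|\ge$ a fixed fraction on blocks with $a\approx1/\|\theta\|$, and each $A_j$ has many elements in every block. However, the bulk assignment conflicts with the quota \eqref{eq:A_j-growth}. I would resolve this by requiring only that, within each block, a prescribed $\pi_j$-proportion goes to $A_j$, chosen to capture at least a $\pi_j/2$ fraction of $\sum_{a\in X_k}\|a\theta\|$ for a finite $\delta$-net of $\theta$'s. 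A discrepancy or averaging argument over random balanced assignments gives such a choice, with Lipschitz control extending from the net to all $\theta$ with $\|\theta\|\ge\delta_s$. I expect this balancing step to be the hardest.
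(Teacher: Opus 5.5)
\paragraph{The step in (3)$\Rightarrow$(1).} This direction has a gap that cannot be closed, because the fact you lean on is false. A strongly complete set may miss infinitely many $\rho$-adic blocks, since the elements below $\rho^k$ can have total sum far exceeding $\rho^{k+1}$.

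Take $\rho=2$ and let $A$ consist of all integers in $\bigcup_{m\ge1}(4^m,2\cdot4^m]$. Then $X_k=\emptyset$ for every odd $k$. Given any $\{\pi_j\}$, put $\Pi_j=\pi_1+\cdots+\pi_j$, and cut $X_{2m}$ into consecutive runs of lengths $n_{j,2m}=\lceil4^m\Pi_j\rceil-\lceil4^m\Pi_{j-1}\rceil$; these satisfy $|n_{j,2m}-\pi_j4^m|<1$.

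The $r$-element subset sums of $\{a+1,\ldots,a+L\}$ fill $[ra+\binom{r+1}{2},\,r(a+L)-\binom{r}{2}]$, and consecutive such ranges meet once $a\le r(L-r-1)$. Hence the run of $A_j$ in $X_{2m}$ has subset sums covering an interval $[c_j4^m,c_j'16^m]$ for all large $m$. These intervals overlap for consecutive $m$, so every $A_j$ is strongly complete. Moreover \eqref{eq:A_j-growth} holds, trivially when $m_k=0$.

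So (2) and (3) hold while $m_k\not\to\infty$. Only the $L^1$-divergence half of (1) follows from (3), and your argument for that half is fine. The paper's own proof of this direction has the same hole: it asserts $A_j\cap X_k\ne\emptyset$ for large $k$ and writes $|A_j\cap X_k|=(\pi_j+o_j(1))m_k$, which presupposes $m_k\to\infty$.

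\paragraph{The balancing step in (1)$\Rightarrow$(2).} Your outline matches the paper's: round to $n_{j,k}$, secure $H_1(A_j)=\{0\}$, then apply Theorem \ref{thm:main} because $n_{j,k}\to\infty$. You also identify the right obstacle. However, the net-plus-balancing step is unsupported, and the natural way of carrying it out fails.

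Within a single block, no $n_{j,k}$-subset need capture a fixed fraction of $\sum_{a\in X_k}\|a\theta\|$ for several $\theta$ at once, because the mass can concentrate on different elements for different $\theta$. So the balancing must be aggregated over many blocks. But $\theta\mapsto\sum_{a\in X_k}\|a\theta\|$ has Lipschitz constant about $m_k\rho^k$, so a net controlling blocks up to $K$ has $\gg\rho^K$ points. A union bound over that net needs the chunk sums to be $\gg K$ at every net point. \eqref{eq:L1-divergence} does not provide this, since it gives only divergence, possibly at an arbitrarily slow and non-uniform rate.

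\paragraph{The missing idea.} It is the reduction to countably many frequencies already used for Theorem \ref{thm:main}. Choose $A_{j,k}\subseteq X_k$ as a uniformly random $n_{j,k}$-subset, independently in $k$, and reserve a random singleton $C_{j,k}\subseteq A_{j,k}$. Then $C_j=\bigcup_kC_{j,k}$ has bounded ratios, so $H_1(C_j)$ is countable by Lemma \ref{lem:bounded-ratio}. Since $H_1(B_j\cup C_j)=H_1(B_j)\cap H_1(C_j)$, it suffices that $B_j=\bigcup_k(A_{j,k}\setminus C_{j,k})$ diverges at each $\theta\in H_1(C_j)\setminus\{0\}$.

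Conditionally on $C_j$, each $B_{j,k}$ is a uniform $(n_{j,k}-1)$-subset of $X_k\setminus C_{j,k}$, and for these $\theta$ one has $\sum_k\sum_{a\in X_k\setminus C_{j,k}}\|a\theta\|=\infty$. Maclaurin's inequality gives, for an absolute constant $c>0$,
\[
\E\bigl[\exp(-\textstyle\sum_{a\in B_{j,k}}\|a\theta\|)\bigr]\le\exp\bigl(-c\,\tfrac{n_{j,k}-1}{m_k-1}\sum_{a\in X_k\setminus C_{j,k}}\|a\theta\|\bigr).
\]
By independence and Markov's inequality, divergence then holds almost surely for each such $\theta$. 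A countable union bound over $\theta$ and $j$ finishes the argument.
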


Another delicate problem of Graham \cite{Graham71}, which was later repeated by Erd\H{o}s and Graham \cite[p. 57]{ErdosGraham80} (see also Bloom's Problem \#349 \cite{Bloom349}), asks for all values of $t,\alpha>0$ such that 
\[
S(t,\alpha):=\{\lfloor t\alpha^n\rfloor:n\in\N\}\setminus\{0\}
\] 
is complete. Graham \cite[Theorem 6]{Graham64b} provided a full characterization of those pairs $(t,\alpha)\in(0,1)\times(1,2)$ for which $S(t,\alpha)$ is complete. He also showed \cite[Theorem 2]{Graham64b} that $S(t,\alpha)$ is complete for $(t,\alpha)\in(0,1)\times(1,\sqrt[3]{5}]$ and that it is incomplete when $t>1$ and $\alpha\ge\max\{2/t,\varphi\}$, where $\varphi=(1+\sqrt{5})/2$ is the golden ratio. In addition, he conjectured that $S(t,\alpha)$ is complete for every $(t,\alpha)\in(0,\infty)\times(1,\varphi)$, but Dubickas \cite{Dubickas06} has shown that if $\alpha_0=1.2527759\dots$ is the larger of the two real roots of the polynomial
\[
P(x)=x^{18}-x^{12}-x^{11}-x^{10}-x^{9}-x^{8}-x^{7}-x^{6}+1,
\]
then there is some $t>0$ such that every member of $S(t,\alpha_0)$ is even, hence disproving Graham's conjecture. More recently, van Doorn \cite{Doorn26} closed the case $\alpha\ge\varphi$ and proved several partial results concerning the complementary range $1<\alpha<\varphi$. 

Closely related to this problem, our next result concerns strong completeness of polynomially perturbed single-ray sets of the form
\[\N\cap\{\lfloor t\alpha^n\rfloor+P_i(n):n\in\N,1\le i\le k\},\]
where $P_1,\ldots,P_k$ are pairwise distinct integer-valued polynomials (with rational coefficients). A special case of it asserts that 
\[
 \N\cap\{\lfloor t\alpha^n\rfloor,\lfloor t\alpha^n\rfloor+P(n):n\in\N\}
\]
is strongly complete for any $t>0$, $\alpha\in(0,2)$, and
primitive integer-valued polynomial $P$. To state our result in a clean manner, we write $\operatorname{Int}(\Z)$ for the set of integer-valued polynomials. For every nonempty $S\subseteq\Z$ containing a nonzero element, define
\[\gcd(S):=\max\{d\in\N: d\mid s\text{ for every }s\in S\}.\]

\begin{thm}\label{thm:poly-talpha^n}
Let $k\ge 2$, $t>0$, and $1<\alpha<\alpha_k$, where 
\[
 \alpha_k:=\frac{k-1+\sqrt{(k-1)^2+8}}{2}
\]
Let
$\Pp=\{P_1,\ldots,P_k\}\subseteq\operatorname{Int}(\Z)$, and put 
\[
 A_{t,\alpha,\Pp}:=\N\cap\{\lfloor t\alpha^n\rfloor+P(n):n\in\N,P\in\Pp\}
\]
and 
\[
 d_{\Pp}:=\gcd\left(\{P_i(n)-P_j(n):n\in\Z,1\le i<j\le k\}\right).
\]
Then the following are equivalent:
\begin{enumerate}
 \item $A_{t,\alpha,\Pp}$ is strongly complete;
 \item \eqref{eq:L1-divergence} holds for $A=A_{t,\alpha,\Pp}$;   
 \item for every prime $p\mid d_{\Pp}$, $p\nmid\lfloor t\alpha^n\rfloor+P_1(n)$ for infinitely many $n$.
\end{enumerate}
In particular, $A_{t,\alpha,\Pp}$ is strongly complete if $d_{\Pp}=1$.
\end{thm}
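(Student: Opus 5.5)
We prove (1)$\Rightarrow$(2)$\Rightarrow$(3)$\Rightarrow$(1). The implication (1)$\Rightarrow$(2) is the general observation from the introduction: every strongly complete set satisfies \eqref{eq:L1-divergence}.

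For (2)$\Rightarrow$(3), argue by contraposition. Suppose some prime $p\mid d_{\Pp}$ divides $\lfloor t\alpha^n\rfloor+P_1(n)$ for all large $n$. Since $p\mid P_i(n)-P_1(n)$ for every $n$ and $i$, every element $\lfloor t\alpha^n\rfloor+P_i(n)$ with $n$ large is divisible by $p$. Then $\|a/p\|=0$ for all but finitely many $a\in A_{t,\alpha,\Pp}$, so the series in \eqref{eq:L1-divergence} converges at $\theta=1/p$.

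The substantive direction is (3)$\Rightarrow$(1). The plan is to apply Theorem \ref{thm:main}, or rather the more flexible block version Theorem \ref{thm:ordered-blocks}, with $\rho=\alpha$. This requires two ingredients.

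\textbf{Local density.} Since the $P_i$ have polynomial growth while $t\alpha^n$ grows exponentially, for large $n$ the $k$ numbers $\lfloor t\alpha^n\rfloor+P_i(n)$ are distinct (pairwise distinct polynomials differ for large $n$). Their ratio to $t\alpha^n$ tends to $1$, so they lie in a window of multiplicative width $1+o(1)$ around $t\alpha^n$. Consequently each interval $(\alpha^j,\alpha^{j+1}]$ contains, for all large $j$, the $k$ elements coming from a single index $n$ (up to edge effects). Grouping consecutive indices into blocks raises the count: taking $\rho=\alpha^r$ gives about $rk$ elements per interval.

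The numerical condition $\alpha<\alpha_k$ is equivalent to $\alpha^2-(k-1)\alpha-2<0$, that is, $\alpha(\alpha-(k-1))<2$. This looks exactly like the condition needed to beat $u_\rho=\lceil\rho(\rho-1)\rceil$ in the constant $M_\rho$, with the count per block being $k$ rather than $\rho$. So I expect to use the three-component criterion, Theorem \ref{thm:strong-3set}, directly. The construction would be:
\begin{itemize}
\item take one component to be the ``base'' ray $\{\lfloor t\alpha^n\rfloor+P_1(n)\}$, which has gaps controlled because $\alpha<2$;
\item let the remaining $k-1$ perturbed copies supply local flexibility, with at least $2u+1$ elements in a suitable ordered block.
\end{itemize}
I would verify the block hypotheses of Theorem \ref{thm:ordered-blocks} with blocks indexed by $n$, and check that the inequality $\alpha(\alpha-k+1)<2$ is precisely what that theorem's size requirement becomes.

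\textbf{The $L^1$-divergence.} Fix $\theta\notin\Z$ and suppose $\sum\|a\theta\|<\infty$. Then for each $i\ne j$ we have $\|(P_i(n)-P_j(n))\theta\|\le\|a_{n,i}\theta\|+\|a_{n,j}\theta\|\to0$, with summable error. A polynomial sequence $Q(n)\theta$ with $\sum_n\|Q(n)\theta\|<\infty$ forces $Q(n)\theta\in\Z$ for all large $n$. This can be seen by taking finite differences: the summable $\|\cdot\|$ passes to differences, so the leading difference, which is a constant times $\theta$, lies in $\Z$; then induct, or use Weyl equidistribution if some coefficient of $Q\theta$ is irrational. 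Hence $\theta d_{\Pp}\in\Z$, i.e.\ $\theta=b/d$ with $d\mid d_{\Pp}$; reducing to a prime $p\mid d$, we may take $\theta$ of order $p$.

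Now we need $\sum_n\|(\lfloor t\alpha^n\rfloor+P_1(n))b/p\|=\infty$. By (3), $p\nmid\lfloor t\alpha^n\rfloor+P_1(n)$ for infinitely many $n$, and each such $n$ contributes at least $1/p$. So the series diverges, contradicting the assumption.

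\textbf{Main obstacle.} I expect the hard step to be matching the exact threshold $\alpha_k$ with the block criterion, i.e.\ choosing the block decomposition so that the criterion's inequality reduces to $\alpha^2-(k-1)\alpha<2$. I would first try blocks consisting of one index $n$ (size $k$) with ratio $\rho=\alpha$, and check the $u$-type condition from Theorem \ref{thm:strong-3set}.
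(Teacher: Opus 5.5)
\textbf{Where the proposal fails: (3)$\Rightarrow$(1).} Your arguments for (1)$\Rightarrow$(2) and for (2)$\Leftrightarrow$(3) are fine and match the paper. In the last step, if $\theta=a/q$ in lowest terms with $q>1$, pick a prime $p\mid q$. Each of the infinitely many $n$ with $p\nmid\lfloor t\alpha^n\rfloor+P_1(n)$ then contributes at least $1/q$.

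The gap is in (3)$\Rightarrow$(1). Going through Theorem \ref{thm:main}, Theorem \ref{thm:ordered-blocks}, or more generally Theorem \ref{thm:strong-3set}, cannot reach $\alpha_k$. Take $k=2$, so $\alpha_2=2$. Theorem \ref{thm:main} with $\rho=\alpha$ needs $M_\alpha\le 2$, which means $\alpha\le\rho_0\approx1.3247$. Grouping indices ($\rho=\alpha^r$) only gives about $rk$ points per interval, while $M_\rho$ grows like $2\rho^2$.

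More decisively, when $k=2$ and $\frac{3+\sqrt{17}}{4}<\alpha<2$, no subset of $A_{t,\alpha,\Pp}$ can be partitioned into infinite $B_1,B_2,C$ with $\Delta(B_1),\Delta(B_2)<\infty$. To see this:
\begin{itemize}
\item Take a large index $m$ at which $C$ has an element, and let $b_i$ be the least element of $B_i$ at an index $>m$.
\item Since $\Delta(B_i)<\infty$, the elements of $B_i$ at indices $\le m$ sum to at least $b_i-O(1)\ge t\alpha^{m+1}-o(\alpha^m)$. So $B_1\cup B_2$ needs mass about $2t\alpha^{m+1}$ from indices $\le m$.
\item Once $C$'s element at index $m$ is removed, only $\bigl(\frac{2\alpha}{\alpha-1}-1\bigr)t\alpha^m+o(\alpha^m)$ is available.
\item This is less than $2t\alpha^{m+1}$ exactly when $2\alpha^2-3\alpha-1>0$.
\end{itemize}
For general $k$ the same count caps any two-reservoir architecture at $2\alpha^2-(k+1)\alpha-1\le 0$, which is strictly below $\alpha_k$. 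So your remark that $\alpha<\alpha_k$ resembles ``beating $u_\rho$'' does not hold. No choice of blocks in Theorem \ref{thm:ordered-blocks} will close this.

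\textbf{The missing idea: use a single reservoir.} The paper uses Lemma \ref{lem:paired-complete}.
\begin{itemize}
\item Set $Q=P_1-P_2$ and fix a large modulus $M$ and a residue $r$. Let $C$ consist of the pairs $x_n<y_n$ formed by $\lfloor t\alpha^n\rfloor+P_1(n)$ and $\lfloor t\alpha^n\rfloor+P_2(n)$ for $n\equiv r\pmod{M}$. Put everything else into $B$.
\item Graham's theorem on complete polynomial sequences shows that tail subset sums of the differences $|Q(n)|$, $n\equiv r\pmod{M}$, contain all large multiples of $h=\gcd\{Q(n):n\equiv r\pmod{M}\}$.
\item Switching $x_n\leftrightarrow y_n$ inside pairs makes $\FS(E)+\FS(C)$ thick, where $E\subseteq B$ is finite and covers all residues mod $h$ (Proposition \ref{prop:modular}). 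Syndeticity of $\FS(B\setminus E)$ then finishes the argument.
\item $B$ loses only two elements every $M$ indices. So $\Delta(B)<\infty$ reduces to $\frac{k}{\alpha-1}-\frac{2}{\alpha(1-\alpha^{-M})}>1$. As $M\to\infty$ this becomes $\alpha^2-(k-1)\alpha-2<0$, which is exactly $\alpha<\alpha_k$.
\item Hypothesis (3) enters through the choice of $r$. It is chosen to avoid the residue classes containing infinitely many indices where a prime $p\mid\gcd_n Q(n)$ fails to divide the corresponding element. Then $B$ keeps the coprimality condition (2) of the lemma.
\end{itemize}
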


Theorem \ref{thm:poly-talpha^n} will be proved in Section \ref{S:poly-talpha^n}. The key tool (Lemma \ref{lem:paired-complete}) used to prove this theorem also yields an improvement of Theorem \ref{thm:main} when the elements of $A$ have bounded gaps infinitely often; see Corollary \ref{cor:3-bounded-gaps}. The choice of $P_1$ in condition (3) is immaterial, since $p\mid d_{\Pp}$ implies the same divisibility for $\lfloor t\alpha^n\rfloor+P_i(n)$ for every $1\le i\le k$. Moreover, it can be shown that condition (2) may be replaced by
\[
\sum_{a\in A_{t,\alpha,\Pp}}\|a\theta\|^q=\infty,
  \quad\forall\theta\in\T\setminus\{0\},
\]
for some (or all) $q>0$; see Remark \ref{rmk:H_q(A_{t,alpha,P})}.

% We expect our results to find many interesting applications. The following corollary provides such an example. For every nonempty $S\subseteq\Z$ containing a nonzero element, define
% \[\gcd(S):=\max\{d\in\N: d\mid s\text{ for every }s\in S\}.\]
% This notation will be used throughout the paper.

% \begin{cor}\label{cor:primitive}
% Fix $\rho>1$ and $k_0\in\N$. Let $S\subseteq\Z$ be finite and satisfy $|S|\ge M_{\rho}$ and $\gcd(S-S)=1$, where $M_{\rho}$ is defined as in \eqref{eq:constants}. For every $k\ge k_0$, let $b_k$ be an integer such that $b_k+S\subseteq(\rho^k,\rho^{k+1}]$. Then
% \[
%   A=\bigcup_{k\ge k_0}(b_k+S)
% \]
% is strongly complete.
% \end{cor}
% For instance, if $b_k\in(2^k,2^{k+1}-4]\cap\N$ for every $k\ge k_0$, then
% \[
%   A=\bigcup_{k\ge k_0}\{b_k,b_k+1,\ldots,b_k+5\}
% \]
% is strongly complete, regardless of how the $b_k$ are chosen. Such sets are logarithmically sparse:
% \[\big|A\cap[1,x]\big|=\frac{6}{\log 2}\log x+O(1).\]
% In particular, they have bounded share in each dyadic interval, so they do
% not satisfy Cassels's growth condition.

Finally, we conclude with an application of our machinery to a conjecture of Burr, Erdős, Graham, and Li \cite{BurrErdosGrahamLi96}; see also Erdős Problem 124 \cite{Bloom124}. A slightly stronger set-theoretic formulation of the conjecture appearing in \cite{BergelsonSimmons17} states that if $D\subseteq\N_{\ge2}$ is nonempty and has no two elements which are powers of the same integer, then the set
\[\Pp(D) :=\bigcup_{d\in D}\{d^n: n\in\N\}\]
is strongly complete if and only if $\gcd(D)=1$ and 
\begin{equation}\label{eq:sum-1/(d-1)}
\sum_{d\in D}\frac{1}{d-1}\ge1.   
\end{equation}
It is evident that completeness of $\Pp(D)$ forces $\gcd(D)=1$. Pomerance observed that \eqref{eq:sum-1/(d-1)} is also necessary when $D$ is finite\footnote{Tao has sketched a reconstruction of Pomerance's argument in the comments on \cite{Bloom124}. In greater detail, if $d_1<\cdots<d_k$ are the elements of $D$, where $k\ge2$, and if the sum in \eqref{eq:sum-1/(d-1)} equals $\alpha$, which may be assumed to lie in $(1/2,1)$ without loss of generality, then given $\epsilon>0$, Dirichlet's simultaneous approximation theorem applied to 
\[\frac{\log d_1}{\log d_2},\ldots,\frac{\log d_1}{\log d_k}\notin\Q,\]
yields infinitely many $(m_1,..,m_k)\in\N^k$ such that $|m_i\log d_i-m_jd_j|<2\epsilon$ for $1\le i<j\le k$. For any $\beta\in(1,1/\alpha)$, take $\epsilon$ sufficiently small in terms of $\beta,d_k$, and set $N:=\min_{1\le i\le k}d_i^{m_i}$, which can be arbitrarily large, so that $d_1^{m_1},\ldots,d_k^{m_k}\in[N,\beta N)$. Since each $m_i$ is the greatest integer such that $d_i^{m_i-1}<N$, the sum of all elements of $\Pp(D)\cap[1,N)$ is easily seen to be less than $\alpha\beta N<N$, so $N\notin\FS(\Pp(D))$. Consequently, $\Pp(D)$ is incomplete.}, while Melfi \cite{Melfi04} constructed, for every $\epsilon>0$, an infinite set $D$ for which the sum in \eqref{eq:sum-1/(d-1)} is less than $\epsilon$ while 
\[\Pp_k(D) :=\bigcup_{d\in D}\{d^n: n\ge k\}\]
is complete for every $k\in\N$. Burr, Erdős, Graham, and Li \cite[Theorem 1]{BurrErdosGrahamLi96} proved that $\Pp(D)$ is strongly complete if $\gcd(D)=1$ and $D$ has positive upper density. Bergelson and Simmons \cite[Theorem 1.23]{BergelsonSimmons17} generalized this result, showing that if $D$ admits a partition $D=D_1\cup D_2\cup D_3\cup C$ with $\gcd(C)=1$ and \eqref{eq:sum-1/(d-1)} holds with $D_i$ in place of $D$ for every $i=1,2,3$, then $\Pp(D)$ is strongly complete. In the final section, we obtain the following improvement with one fewer $D_i$ needed. 

\begin{thm}\label{thm:BS-3-sets}
Let $D\subseteq\N_{\ge2}$ be a nonempty finite subset with no two elements being powers of the same integer. Suppose that $D=D_1\cup D_2\cup C$ is a partition of $D$ into nonempty subsets such that $\gcd(C)=1$ and
\begin{equation}\label{eq:sum-D_i}
 \sum_{d\in D_i}\frac1{d-1}\ge1
\end{equation}
for every $i=1,2$. Then $\Pp(D)$ is strongly complete.
\end{thm}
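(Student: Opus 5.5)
We would derive Theorem \ref{thm:BS-3-sets} from the three-component strong-completeness criterion, Theorem \ref{thm:strong-3set}, applied to the partition
\[
\Pp(D)=\Pp(D_1)\cup\Pp(D_2)\cup\Pp(C)
\]
(after discarding finitely many elements). In Bergelson--Simmons, each $D_i$ serves as a ``filling'' component: the inequality \eqref{eq:sum-1/(d-1)} for $D_i$ gives a Cassels--Graham type \emph{gap condition}. Listing $\Pp(D_i)\cap(N_0,\infty)$ increasingly as $b_1<b_2<\cdots$, this condition reads
\[
b_{n+1}\le 1+\sum_{m\le n}b_m
\]
for all large $n$, at least up to a bounded error absorbed by finite adjustments. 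Such a condition makes $\FS$ of the component contain long runs of consecutive sums, or at least be syndetic with controlled gaps on each scale. Their third copy $D_3$ was needed only to absorb the errors produced when the arithmetic component $C$ is combined with the other two. We expect the new criterion to replace that third copy: one component supplies residues and the other two supply a ``doubly dense'' set of subset sums.

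First, since $D$ is finite, show that for each $i$ and each $N_0$ the set $\FS(\Pp(D_i)\cap(N_0,\infty))$ has bounded relative gaps. Concretely, every interval $[x,(1+\epsilon_i)x]$ with $x$ large meets it, or more precisely it satisfies the hypothesis on the first two components of Theorem \ref{thm:strong-3set}. The argument is greedy, using $\sum_{d\in D_i}1/(d-1)\ge1$. The elements of $\Pp(D_i)$ below $N$ sum to roughly $\sum_{d}N\cdot d/(d-1)\cdot(\text{fractional factor})$. Equality cases, where the fractional parts $\{\log_d N\}$ align badly, have to be handled through the multiplicative independence of the $d\in D_i$, which rules out exact alignment persistently.

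Second, verify the hypothesis on the third component: $\gcd(C)=1$ implies $\Pp(C)$ has no common obstruction modulo any integer $q$. Indeed, for each prime $p$ some $c\in C$ is coprime to $p$, so the powers $c^n$ are units modulo $p^k$. Hence $\FS(\Pp(C)\cap(N_0,\infty))$ covers every residue class modulo any fixed $q$ using elements from a bounded range. It also gives the divergence \eqref{eq:L1-divergence} restricted to the relevant $\theta$: for $\theta$ irrational the powers $c^n$ are not eventually trapped near $0$, and for rational $\theta=a/q$ one uses the unit argument.

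Third, plug the two components into Theorem \ref{thm:strong-3set} and conclude completeness of $\Pp(D)\setminus F$ for each finite $F$. The main obstacle is the first step. Under only $\sum 1/(d-1)\ge1$, with equality allowed, the subset sums of $\Pp(D_i)$ can have gaps of size comparable to the scale, not bounded gaps. We would therefore need the criterion to tolerate gaps proportional to the next element, compensated by the second filling set $D_{3-i}$ interleaving with it. I would first try to check that the criterion's hypothesis is exactly such a ``two sets whose gaps are covered by each other'' condition, and prove it by a greedy descent using Pomerance-type bookkeeping on the partial sums.
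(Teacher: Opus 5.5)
Your overall architecture matches the paper's: apply Theorem \ref{thm:strong-3set} to $\Pp_k(D)=\Pp_k(D_1)\cup\Pp_k(D_2)\cup\Pp_k(C)$, which is a partition because no two elements of $D$ are powers of a common integer. The verification of the hypotheses, however, has genuine gaps.

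\textbf{The two filling components.} The criterion needs exactly $\Delta(\Pp_k(D_i))<\infty$. This is your own condition $b_{n+1}\le\sum_{m\le n}b_m+O(1)$. Your Step 1 instead targets ``bounded relative gaps'' (meeting every $[x,(1+\epsilon)x]$), which is far too weak to feed into the criterion. You then call this the main obstacle, claim that equality in \eqref{eq:sum-D_i} can produce gaps comparable to the scale, and propose replacing the criterion by an interlocking ``gaps covered by each other'' condition that it does not state. That claim is false; the bound is one line. For $b\in\Pp_k(D_i)$ and $d\in D_i$, let $m$ be maximal with $d^m<b$. Then $\sum_{n=k}^{m}d^n=(d^{m+1}-d^k)/(d-1)\ge(b-d^k)/(d-1)$, and this holds trivially when the sum is empty. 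Summing over $d\in D_i$ and using \eqref{eq:sum-D_i} gives
\[
 b-\sum_{\substack{b'\in\Pp_k(D_i)\\ b'<b}}b'\le\sum_{d\in D_i}\frac{d^k}{d-1}
\]
uniformly in $b$. So equality in \eqref{eq:sum-D_i} is harmless, and no alignment of fractional parts or multiplicative independence is involved.

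\textbf{The third component.} You only address $H_1(\Pp_k(C))=\{0\}$, plus residue covering, which is not a separate hypothesis of the criterion (it follows from $H_1=\{0\}$ by Proposition \ref{prop:modular}). Theorem \ref{thm:strong-3set} also demands either $\Delta(\Pp_k(C))<\infty$ or $H_2(\Pp_k(C))$ countable, and the first alternative can fail. For $C=\{5,6\}$ we have $\frac14+\frac15<1$, and $\Delta(\Pp_k(C))=\infty$.

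The missing step is that consecutive elements of $\Pp_k(C)$ have ratio at most $\min C$: the first power of $\min C$ exceeding $c$ is at most $(\min C)\,c$. Hence Lemma \ref{lem:bounded-ratio} makes $H_2(\Pp_k(C))$ countable. Through Lemma \ref{lem:Hartman-H_1-H_2} and Griesmer's thick-sumset lemma, this is precisely what removes the need for Bergelson--Simmons's $D_3$. Your heuristic of one component ``supplying residues'' does not capture this mechanism.

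\textbf{The irrational case.} Your assertion that $\|c^n\theta\|\not\to0$ for irrational $\theta$ also needs an argument. Suppose $\|c^n\theta\|=|c^n\theta-r_n|\to0$ with $r_n\in\Z$. Then $|r_{n+1}-cr_n|<1$ eventually, so $r_{n+1}=cr_n$ and $\|c^{n+1}\theta\|=c\|c^n\theta\|$. This forces $c^N\theta\in\Z$ for some $N$. Applying this to every $c\in C$ and using $\gcd(C)=1$ gives $\theta=0$ in $\T$.
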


\section{A three-component strong-completeness criterion}\label{S:strong-3set}
Adopting further terminology in \cite{BergelsonSimmons17}, we say that a set $S\subseteq\N$ is \emph{syndetic} if it has bounded gaps, in the sense that there is some $k\in \N$ such that
\[\N\subseteq\bigcup_{j=0}^{k}(S-j),\]
and is \emph{thick} if it contains arbitrarily long intervals. It is an easy fact \cite[Lemma 1]{Graham64a} that if $S$ is syndetic and $T$ is thick, then $S+T$ is \textit{cofinite}, meaning that it covers all but finitely many $n\in\N$. 

For any $S\subseteq\N$, we define
\[\Delta(S):=\sup_{s\in S}\Bigg(s-\sum_{\substack{t\in S\\t<s}}t\Bigg).\]
Observe that $\FS(S)$ is not syndetic if $\Delta(S)=\infty$. Indeed, if $\Delta(S)=\infty$, then $s-\sum_{\substack{t\in S\\t<s}}t$ is bigger than any given number for infinitely many $s\in S$. For every such $s$ there is no subset sum in $\FS(S)$ lying strictly between $\sum_{\substack{t\in S\\t<s}}t$ and $s$. Thus $\FS(S)$ has unbounded gaps. 

We record the following lemma of Burr and Erd\H{o}s
\cite[Lemma 3.2]{BurrErdos81}; see also
\cite[Lemma 2.11]{BergelsonSimmons17}.
% \begin{lem}\label{lem:burr-erdos3.23.1}
% Every $S\subseteq\N$ containing two disjoint subsets $T_1,T_2$, where $T_1$ is syndetic and $T_2$ is thick, must be complete.
% \end{lem}

\begin{lem}\label{lem:burr-erdos3.2}
Let $S\subseteq\N$ be infinite. If $\Delta(S)<\infty$, then $\FS(S)$ is syndetic.
\end{lem}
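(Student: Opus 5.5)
We argue by induction, listing the elements of $S$ in increasing order as $s_1<s_2<\cdots$ (with multiplicity irrelevant since $S$ is a set) and writing $\sigma_n:=s_1+\cdots+s_n$ and $\Delta:=\Delta(S)<\infty$. Note that $\Delta\ge s_1\ge1$. We aim to prove the following claim for every $n\ge1$:
\[
\text{every integer } m\in[0,\sigma_n] \text{ lies within distance } \Delta \text{ of } \FS(\{s_1,\dots,s_n\})\cup\{0\}.
\]
Since $\sigma_n\to\infty$ (because $S$ is infinite), this shows that $\FS(S)\cup\{0\}$ has gaps of length at most $2\Delta+1$ on all of $\Z_{\ge0}$. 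Hence $\FS(S)$ is syndetic with $k=2\Delta+1$ in the definition, after adjusting for the finitely many small integers near $0$.

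\textbf{Base case.} For $n=1$ the claim holds because $s_1\le\Delta$: every $m\in[0,s_1]$ is within distance $\Delta$ of $0$.

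\textbf{Inductive step.} Assume the claim for $n-1$, and let $m\in[0,\sigma_n]$. There are two cases.
\begin{enumerate}
\item If $m\le\sigma_{n-1}$, the inductive hypothesis applies directly.
\item If $m>\sigma_{n-1}$, we use the hypothesis $s_n-\sigma_{n-1}\le\Delta$, which comes straight from the definition of $\Delta(S)$. Write $m=s_n+m'$, so that $m'\in[\sigma_{n-1}-s_n,\ \sigma_{n-1}]$.
 \begin{itemize}
 \item[(a)] If $m'\ge0$, the inductive hypothesis gives some $f\in\FS(\{s_1,\dots,s_{n-1}\})\cup\{0\}$ with $|m'-f|\le\Delta$. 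Then $s_n+f\in\FS(\{s_1,\dots,s_n\})$ is within $\Delta$ of $m$.
 \item[(b)] If $m'<0$, then $m$ lies in $(\sigma_{n-1},s_n)$, an interval of length at most $\Delta$. Thus $m$ is within $\Delta$ of $s_n$ itself.
 \end{itemize}
\end{enumerate}
This completes the induction.

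The only point requiring care is the bookkeeping: checking that the ranges cover $[0,\sigma_n]$ and that the element $0$ (the empty sum) may be used only near the origin. This is harmless, since any $m$ within $\Delta$ of $0$ is at most $\Delta$, a bounded set. So once $m>\Delta$, every witness $f$ is a genuine element of $\FS(S)$. I do not expect a real obstacle.
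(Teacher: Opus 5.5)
Your induction is correct. Each case of the covering claim checks out; in (b) you even get $s_n-m<s_n-\sigma_{n-1}\le\Delta$. Since $\sigma_n\to\infty$, the claim gives syndeticity.

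The paper does not prove this lemma itself. It quotes it from Burr and Erd\H{o}s (Lemma 3.2), see also Bergelson and Simmons (Lemma 2.11), so there is no in-paper argument to compare against. Your proof is the standard Brown-criterion-type induction and is a perfectly good self-contained replacement.

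Two small remarks. First, the ``adjustment near $0$'' is unnecessary. For any $n\ge1$, apply your claim to $m=n+\Delta$: this yields a witness $f\in[n,n+2\Delta]$, and $f\ge1$ forces $f\in\FS(S)$, so $k=2\Delta$ works outright in the paper's definition of syndetic. Second, you can halve the constant by tracking gaps instead of covering radius. Put $F_n:=\FS(\{s_1,\dots,s_n\})\cup\{0\}$, so that $F_n=F_{n-1}\cup(F_{n-1}+s_n)$. The only possibly new jump in the union is from $\sigma_{n-1}$ to $s_n$, which is at most $\Delta$. Hence consecutive elements of $\FS(S)\cup\{0\}$ differ by at most $\Delta$. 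Either version suffices for the lemma.
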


Thus for an infinite set $S\subseteq\N$, $\FS(S)$ is syndetic if and only if $\Delta(S)<\infty$. It is also easy to see that if $T\subseteq \N$ is any finite subset, then
\begin{equation}\label{eq:Delta(S-T)}
  \Delta(S\setminus T)\le\Delta(S)+\sum_{t\in T\cap S}t.
\end{equation}
In particular, if $\FS(S)$ is syndetic and $T$ is finite, then $\FS(S\setminus T)$ remains syndetic.

Bergelson and Simmons \cite[Theorem 2.1]{BergelsonSimmons17} showed that one can establish completeness of a set $A$ by checking certain conditions for individual components of a partition of $A$. More precisely, suppose that $A=B_1\cup B_2\cup B_3\cup C$ is a set partition of $A$, with $B_1,B_2,B_3,C$ infinite, such that:
\begin{enumerate}[label=(\Roman*)]
    \item for all $1\le i\le 3$, $\Delta(B_i)<\infty$;
    \item for every irrational $\theta\in\R$,
\begin{equation}\label{eq:C-irrational-L1divergence}
  \sum_{c\in C}\|c\theta\|=\infty;
\end{equation}
    \item for every $q\in\N$,
    \begin{equation}\label{eq:C-modular}
  \FS(C)+q\Z=\Z.
\end{equation}
\end{enumerate}
Then $A$ is complete. Compared with \eqref{eq:L1-divergence}, \eqref{eq:C-irrational-L1divergence} only requires divergence of the series at irrational $\theta$, but the modular-completeness condition \eqref{eq:C-modular} compensates it with the local modular information $\FS(C)\pmod* q =\Z/q\Z$. It is easy to prove that 
\begin{equation}\label{eq:rational-theta}
\sum_{c\in C}\|c\theta\|=\infty,\quad\forall\theta\in\Q\setminus\Z, 
\end{equation}
if and only if 
\begin{equation}\label{eq:gcd-tail}
\gcd\big(C\cap[N,\infty)\big)=1,\quad \forall N\in\N.
\end{equation}
Indeed, if $\theta=r/q\in\Q\setminus\Z$ is written in lowest terms, then  $\|c\theta\|=0$ if and only if $q\mid c$, and every nonzero value of $\|c\theta\|$ is at least $1/q$. Consequently, the series diverges precisely when infinitely many $c\in C$ are not divisible by $q$. Requiring this for every $q\ge2$ is equivalent to \eqref{eq:gcd-tail}. Furthermore, it can be shown that \eqref{eq:C-modular} is a consequence of \eqref{eq:rational-theta}, or equivalently, of \eqref{eq:gcd-tail}. Although this implication is already contained in \cite[Lemma 3]{Graham64a} and \cite[Remarks 2.8--2.10]{BergelsonSimmons17}, we include a slightly more general formulation and a short direct proof of it in Appendix; see Proposition \ref{prop:modular}.

The theorem of Bergelson and Simmons \cite[Theorem 2.1]{BergelsonSimmons17} turns out to be sufficient for our approach to yield Theorem \ref{thm:main} with $3u_{\rho}+\max\{u_{\rho},2\}$ in place of $M_{\rho}$. In order to reduce this constant, we need to build a partition of $A$ with fewer components.

Given any $S\subseteq \Z$ and $q>0$, set
\[H_q(S):=\left\{\theta\in\T:\sum_{s\in S}\|s\theta\|^q<\infty\right\},\]
which we call the $H_q$-\textit{spectrum} of $S$. The conditions \eqref{eq:L1-divergence} and \eqref{eq:L2-divergence} can be rephrased as $H_1(A)=\{0\}$ and $H_2(A)=\{0\}$, respectively. With this definition, our three-component strong-completeness criterion can be summarized as the following theorem.

\begin{thm}\label{thm:strong-3set}
Let $B_1,B_2,C\subseteq\N$ be pairwise disjoint infinite sets with $\Delta(B_1),\Delta(B_2)<\infty$ and $H_1(C)=\{0\}$. Assume further that either $\Delta(C)<\infty$ or $H_2(C)$ is countable. Then $A=B_1\cup B_2\cup C$ is strongly complete.
\end{thm}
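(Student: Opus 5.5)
First I reduce strong completeness to completeness. Deleting a finite set $F$ from $A$ gives $B_i\setminus F$ and $C\setminus F$, which still satisfy every hypothesis. By \eqref{eq:Delta(S-T)}, $\Delta(B_i\setminus F)<\infty$, and likewise $\Delta(C\setminus F)<\infty$ if $\Delta(C)<\infty$. The $H_q$-spectra are unchanged by finite deletions. So it suffices to prove that $A=B_1\cup B_2\cup C$ is complete. By Lemma \ref{lem:burr-erdos3.2}, $\FS(B_1)$ is syndetic, so it is enough to show that $\FS(B_2\cup C)$ is thick: a syndetic set plus a thick set is cofinite, and sums from disjoint sets are sums of distinct elements of $A$.

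To prove thickness, fix $L$. I want to find $n$ such that $n+j\in\FS(B_2\cup C)$ for all $0\le j<L$. Following Bergelson--Simmons, I use $C$ to produce every small residue pattern and $B_2$ to fill in the large scale. The precise claim is this: for every $L$ there are a finite $F\subseteq C$ and an integer $m$ such that $m+\{0,\dots,L-1\}\subseteq \FS_0(F)+\FS_0(B_2\cap(N,\infty))$ for arbitrarily large $N$. I would argue by contradiction via a Fourier/compactness argument on $\T$. Write $\mu_F$ for the measure given by the distribution of random subset sums of $F$ (each element included with probability $1/2$). Its Fourier coefficient at $\theta$ is $\prod_{c\in F}|\cos(\pi c\theta)|$, which is bounded by $\exp(-c_0\sum_{c\in F}\|c\theta\|^2)$. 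The condition $H_1(C)=\{0\}$ controls the irrational and rational $\theta$ (at rational $\theta$ it gives the modular surjectivity of Proposition \ref{prop:modular}). The point is to show that subset sums of long finite pieces of $C$ equidistribute well enough, modulo the gap structure of $\FS(B_2)$, to cover every window of length $L$. Since $\FS(B_2)$ is syndetic with gap bound $g$, it suffices to show that $\FS(C')$ contains $g+L$ consecutive integers for some tail $C'$. Equivalently, I must show that $\FS(C)$ is itself thick after discarding any finite set, because $\FS(B_2)+$(thick) is thick.

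So the core step is to prove that $\FS(C)$ is thick under the hypotheses. There are two cases. If $\Delta(C)<\infty$, then $\FS(C)$ is syndetic. Combining this with the modular surjectivity \eqref{eq:C-modular} and the irrational $L^1$-divergence, a Bergelson--Simmons-type argument upgrades syndeticity to thickness. Concretely, if $\FS(C)$ were syndetic but not thick, one constructs a limit point in a Bohr-type compactification and obtains a character $\theta\neq0$ along which the sums concentrate, which contradicts $H_1(C)=\{0\}$. If instead $H_2(C)$ is countable, I would use the circle method directly. Take a long block $F\subseteq C$ and count representations of $n+j$ by $\int_\T \prod_{c\in F}(1+e(c\theta))e(-(n+j)\theta)\,d\theta$. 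Away from $H_2(C)$ the product is tiny relative to $2^{|F|}$. The countable exceptional set is handled by choosing $n$ in a suitable set of positive measure: after averaging over $n$, a countable set of frequencies contributes only through $\|c\theta\|$, which $H_1$-divergence forces to be small in mass.

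The main obstacle is this second case. The $L^1$ divergence must exclude concentration at the countably many $\theta$ where the $L^2$ sum converges. I would first try to show that any $\theta\in H_2(C)\setminus H_1(C)$ contributes a factor $\prod|\cos\pi c\theta|$ that is not too small, and then compensate using $B_2$. Since $\FS(B_2)$ is syndetic, the term $\sum_{b}\ldots$ can be averaged to kill any single nonzero frequency. Countability then allows a diagonal choice of blocks handling all exceptional frequencies simultaneously.
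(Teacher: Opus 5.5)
\paragraph{Gap in the core step.} Your reduction to completeness and the closing ``syndetic plus thick'' step are fine. The problem is the step you call the core: that $\FS(C')$ is thick for every cofinite $C'\subseteq C$. It is not justified, and it is far stronger than anything the hypotheses are known to give.

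Once you have it, $B_2$ is irrelevant. $\FS(B_1')$ syndetic together with $\FS(C')$ thick would already make $B_1\cup C$ strongly complete, i.e.\ a two-component criterion. Fed into the proof of Proposition~\ref{prop:strong-precomplete}, it would give $M_2^*\le 3$, which is precisely the two-component upgrade the paper flags as not available.

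Neither of your sketches delivers this claim:
\begin{itemize}
\item In the case $\Delta(C)<\infty$: a syndetic, non-thick set need not concentrate along any nonzero character, so there is no contradiction with $H_1(C)=\{0\}$ to extract. The set of $n\ge0$ with even binary digit sum is syndetic and contains no three consecutive integers. Yet its uniform measures on $[0,2^k)$ have Fourier transform tending to $0$ at every $\theta\ne0$, since the Thue--Morse part is $\prod_{j<k}|\sin(\pi2^j\theta)|$.
\item In the case where $H_2(C)$ is countable, the same example shows that Fourier decay of measures carried by a set does not make that set thick.
\item Your circle-method sketch has no uniform minor-arc bound. Here $C$ may have only $O(\log n)$ elements up to $n$, and pointwise smallness of $\prod|\cos\pi c\theta|$ off $H_2(C)$ does not control the integral.
\end{itemize}

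\paragraph{The missing idea.} Thickness has to come from adding $C$ to a set of positive upper Banach density, not from $C$ alone.

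When $H_2(C)$ is countable, the paper builds a Hartman uniformly distributed sequence of measures on $\FS(C')$:
\begin{itemize}
\item random subset sums of a long block kill every $\theta\notin H_2(C)$;
\item for each of the countably many $\theta_j\in H_2(C)\setminus\{0\}$, the facts $\|c\theta_j\|\to0$ and $\sum\|c\theta_j\|=\infty$ produce a finite block of $C$ whose sum $s$ satisfies $\|s\theta_j-\tfrac12\|<\epsilon$, so the factor $(\delta_0+\delta_s)/2$ annihilates $\theta_j$;
\item a diagonal choice handles all $j$.
\end{itemize}
So the compensation comes from blocks of $C$, not from $B_2$. Also, you want the factor at $\theta_j$ to be small, not ``not too small''. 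Griesmer's theorem then makes $\FS(B_1')+\FS(C')$ thick, and $\FS(B_2')$ is the syndetic partner.

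When $\Delta(C)<\infty$, the roles are different:
\begin{itemize}
\item $\FS(B_1)+\FS(B_2)$, a sum of two syndetic sets, contains a piecewise Bohr set;
\item finitely many translates by $\FS(E)$, with $E\subseteq C$ finite, cover the relevant compact group, so $\FS(B_1\cup B_2\cup E)$ is thick;
\item $\FS(C\setminus E)$ is the syndetic partner.
\end{itemize}
Both $B_i$ are needed for thickness there, since a single syndetic set need not contain a piecewise Bohr set. For the same reason, the Bergelson--Simmons argument does not support your first ``precise claim'', which uses $B_2$ alone.
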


As we shall see in Section \ref{S:Pf-main}, the improved constant $M_{\rho}$ arises from a three-component partition as described in Theorem \ref{thm:strong-3set} with countability of $H_2(C)$ fulfilled. To prove this theorem, we start by examining the proof of \cite[Theorem 2.1]{BergelsonSimmons17} to extract the following thickness result.

% Next, we extract the following useful result by examining the proof of \cite[Theorem 2.1]{BergelsonSimmons17}.

\begin{lem}\label{lem:BS2017}
Let $B_1,B_2,C\subseteq\N$ be pairwise disjoint. Suppose that
$\FS(B_1)$ and $\FS(B_2)$ are syndetic and that both \eqref{eq:C-irrational-L1divergence} and \eqref{eq:C-modular} hold for $C$. Then there exists a finite set $E\subseteq C$ such that
\[
  \FS(B_1\cup B_2\cup E)
\]
is thick.
\end{lem}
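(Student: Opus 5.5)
Since $\FS(B_1)$ and $\FS(B_2)$ are syndetic, the set $\FS(B_1)+\FS(B_2)$ is "syndetic squared". The obstruction to thickness is that a syndetic set can be confined to a union of residue classes or a Bohr set. The plan is to show that adding finitely many elements of $C$ destroys every such obstruction. I would follow the Bergelson--Simmons strategy via a dichotomy for sumsets of syndetic sets. For each $N$, let $S_N:=\FS(B_1\cap[N,\infty))+\FS(B_2\cap[N,\infty))$. It is still a sum of two syndetic sets, since by \eqref{eq:Delta(S-T)} removing finitely many elements preserves $\Delta<\infty$, and Lemma \ref{lem:burr-erdos3.2} applies. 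The structural input I would use is the known fact, a Bogolyubov--F{\o}lner type statement as in \cite{BergelsonSimmons17}, that the sum of two syndetic sets $S_1+S_2$ is \emph{piecewise Bohr}. Concretely, there are a thick set $T$, finitely many frequencies $\theta_1,\dots,\theta_r\in\T$, and $\varepsilon>0$ such that $S_1+S_2\supseteq T\cap(m+\mathrm{Bohr}(\theta_1,\dots,\theta_r;\varepsilon))$ for some shift $m$. Here $\mathrm{Bohr}(\theta;\varepsilon)=\{n:\|n\theta_i\|<\varepsilon\ \forall i\}$.

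Given this, the key step is to find a finite $E\subseteq C$ whose subset sums $\FS(E)\cup\{0\}$ meet every translate of the Bohr set, modulo a bounded error. I would split the frequencies $\theta_i$ into rational and irrational ones. For the rational part, with common denominator $q$, I use \eqref{eq:C-modular} to pick finitely many elements of $C$ whose subset sums hit every residue class mod $q$. For the irrational part, I use \eqref{eq:C-irrational-L1divergence}. Divergence of $\sum_{c}\|c\theta\|$ implies that the subset sums of a finite $E\subseteq C$ are $\varepsilon$-dense in the closed subgroup of $\T^r$ generated by the directions involved. The mechanism is that $\{\sum_{c\in F}c\boldsymbol\theta:F\subseteq E\}$ behaves like a random walk with many non-negligible steps. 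To handle the joint torus $\T^r$, I would apply the divergence to every nonzero integer combination $\sum k_i\theta_i$, using Fourier analysis of the measure $\prod_{c\in E}\frac12(\delta_0+\delta_{c\boldsymbol\theta})$. Its Fourier coefficient at a character $\mathbf k$ has modulus $\prod_{c}|\cos(\pi c\,\mathbf k\cdot\boldsymbol\theta)|\le\exp(-c_0\sum_c\|c\,\mathbf k\cdot\boldsymbol\theta\|^2)$.

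The main obstacle is exactly here. The hypothesis only gives $L^1$ divergence, while the Fourier decay naturally wants $L^2$ divergence. I would circumvent this as Bergelson--Simmons do, without proving equidistribution. Iteratively choose elements $c\in C$ with $\|c\,\mathbf k\cdot\boldsymbol\theta\|$ not too small, and use a greedy covering argument: since the steps have divergent total length, the partial subset sums can walk through an $\varepsilon$-net of the relevant subtorus, so every target is approximated. This combinatorial covering is where the $L^1$ condition suffices, because one only needs coverage, not uniformity.

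Finally I combine the pieces. Take $E$ so large that $\FS(E)\cup\{0\}$ meets $n-m+\mathrm{Bohr}$ for every $n$, with elements bounded by $\max E$. Then for each $n$ in a long interval $I\subseteq T$ shifted by $\max E$, some $e\in\FS(E)\cup\{0\}$ has $n-e\in T\cap(m+\mathrm{Bohr})\subseteq S_1+S_2$. Hence $\FS(B_1\cup B_2\cup E)$ contains all such $n$, and since $T$ has arbitrarily long intervals, it is thick. Disjointness of $B_1,B_2,E$ ensures these sums really are sums of distinct elements.
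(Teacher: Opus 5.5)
Your outer frame matches the paper's: take the piecewise Bohr structure $\FS(B_1)+\FS(B_2)\supseteq J\cap\{n:n\boldsymbol\theta\in U\}$ from \cite{BergelsonSimmons17}, find finitely many subset sums of $C$ whose translates of $U$ cover the orbit closure, and shift the thick set $J$. The gap is the middle step, which is the real content of the lemma. You claim that \eqref{eq:C-irrational-L1divergence} makes the subset sums of a finite $E\subseteq C$ $\varepsilon$-dense in the closed subgroup generated by $\boldsymbol\theta$, but the greedy walk you describe does not prove this. That walk is a one-dimensional device: it needs steps tending to $0$ in a single direction with divergent total length. In $\T^r$ you only have divergence character by character, and the steps $c\boldsymbol\theta$ need not be small, so nothing forces the sums through an $\varepsilon$-net. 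Splitting by rational and irrational coordinates is also the wrong decomposition, and divergence alone does not give density in $\overline{\Z\boldsymbol\theta}$. For example, take $\boldsymbol\theta=(\sqrt2,\sqrt2+\frac12)$ and $C=2\N$. Both coordinates are irrational and \eqref{eq:C-irrational-L1divergence} holds, yet every $c\boldsymbol\theta$ lies on the diagonal, a subgroup of index $2$ in $\overline{\Z\boldsymbol\theta}$. The obstruction sits in a character $\mathbf k$ (here $(-1,1)$) with $\mathbf k\cdot\boldsymbol\theta\in\Q$, which is where \eqref{eq:C-modular} must act. Moreover, the elements you choose to fix residues move all coordinates at once, so the two parts cannot be handled separately.

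The missing idea is soft rather than quantitative; it is Claims 2.13--2.14 of \cite{BergelsonSimmons17}, which the paper invokes. Put $G=\bigcap_N\overline{\{n\boldsymbol\theta:n\in\FS(C\cap[N,\infty))\}}$. Concatenating sums from disjoint tails shows that $G$ is a nonempty closed subsemigroup of $\T^d$, hence a compact subgroup. Suppose $\mathbf k$ annihilates $G$ but $\phi=\mathbf k\cdot\boldsymbol\theta\notin\Q$. By compactness, the analogous set for $\phi$ is the image of $G$, namely $\{0\}$, and this forces $\|c\phi\|\to0$. Only now does a one-dimensional walk apply (small steps, divergent sum), and it produces tail sums near $\frac12$, a contradiction. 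So every $\mathbf k$ annihilating $G$ has $\mathbf k\cdot\boldsymbol\theta\in\Q$. Since the annihilator is finitely generated, duality gives $q\boldsymbol\theta\in G$ for some $q$. Then \eqref{eq:C-modular} yields $\{n\boldsymbol\theta:n\in\FS(C)\}+G=H:=G+\{0,\ldots,q-1\}\boldsymbol\theta\supseteq\N\boldsymbol\theta$. Appending tail sums shows that $\{n\boldsymbol\theta:n\in\FS(C)\}$ is dense in $H$, and compactness of $H$ gives a finite $D\subseteq\FS(C)$ with $H\subseteq\bigcup_{m\in D}(m\boldsymbol\theta+U)$. Take $E$ to be the set of summands used for $D$; your last step then goes through, with the shift $\max D$ rather than $\max E$. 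The $L^1$-versus-$L^2$ problem you flagged is genuine for Fourier methods; the paper uses that route only in Lemma \ref{lem:Hartman-H_1-H_2}, under the extra hypothesis that the $H_2$-spectrum is countable.
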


\begin{proof}
We follow the proof of \cite[Theorem 2.1]{BergelsonSimmons17}. It is proved there that one can find
$d\in\N$, $\boldsymbol\theta\in\T^d$, a nonempty open set
$U\subseteq\T^d$, and a thick set $J\subseteq\N$ such that
\begin{equation}\label{eq:piecewise-bohr}
  \FS(B_1\cup B_2)\supseteq\FS(B_1)+\FS(B_2)
  \supseteq
  J\cap\{n\in\N:n\boldsymbol\theta\in U\}
  \ne\emptyset.
\end{equation}
Define
% \begin{equation}\label{eq:G}
%  G:=\bigcap_{N\in\N}
%   \overline{
%     \{n\boldsymbol\theta:
%       n\in\FS(C\cap[N,\infty))\}
%   }
%   \subseteq\T^d.
% \end{equation}
\[G:=\bigcap_{N\in\N}
  \overline{
    \{n\boldsymbol\theta:
      n\in\FS(C\cap[N,\infty))\}
  }
  \subseteq\T^d.\]
Claims 2.13 and 2.14 in the proof of \cite[Theorem 2.1]{BergelsonSimmons17} show that $G$ is a compact subgroup of $\T^d$ and that there exists $q\ge1$ such that $q\boldsymbol\theta\in G$. These are derived only from \eqref{eq:C-irrational-L1divergence} and the assumption that $\FS(B_1)$ and $\FS(B_2)$ are syndetic. Let
\[
  H:=G+\{0,1,\ldots,q-1\}\boldsymbol\theta\supseteq\N\boldsymbol\theta.
\]
Then $H$ is a compact subgroup of $\T^d$. By \eqref{eq:C-modular} we have
\[ \{n\boldsymbol\theta:n\in\FS(C)\}+G=H.\]
Continuing the proof of \cite[Theorem 2.1]{BergelsonSimmons17} yields a finite set $D\subseteq\FS(C)$ such that
\begin{equation}\label{eq:D}
H\subseteq\bigcup_{n\in D}(n\boldsymbol\theta+U).  
\end{equation}
Let 
\[T:=\{n>\max D:n-m\in J\text{ for every }0\le m\le \max D\}.\]
As $J$ is thick, we know that $T$ is also thick. So far, the argument has been essentially the same as in \cite[Theorem 2.1]{BergelsonSimmons17}.

For each $m\in D$, choose a representation of $m$ as a nonempty sum of distinct elements of $C$, and let $E\subseteq C$ be the set of all the summands that appear in these representations. Then $E$ is finite and $D\subseteq\FS(E)$. Fix $n\in T$.  Since
$n\boldsymbol\theta\in H$, \eqref{eq:D} produces some
$m\in D$ such that $(n-m)\boldsymbol\theta\in U$.  Since
$0\le m\le \max D$, we also have $n-m\in J$.  Thus
\eqref{eq:piecewise-bohr} gives
\[
  n-m\in\FS(B_1)+\FS(B_2),
\]
while $m\in\FS(E)$.  The sets $B_1,B_2,E$ are pairwise disjoint, so
\[n\in\FS(B_1)+\FS(B_2)+\FS(E)\subseteq\FS(B_1\cup B_2\cup E).\]  
Hence, $\FS(B_1\cup B_2\cup E)$ contains the thick set $T$ and is therefore also thick.
\end{proof}

Lemma \ref{lem:BS2017} alone yields immediately the following completeness criterion based on a three-component partition of $A$.
\begin{cor}\label{cor:3set}
Let $B_1,B_2,C\subseteq\N$ be pairwise disjoint infinite sets.  Suppose that 
\[\Delta(B_1),\ \Delta(B_2),\ \Delta(C)<\infty,\] 
and that both \eqref{eq:C-irrational-L1divergence} and \eqref{eq:C-modular} hold.
Then $A=B_1\cup B_2\cup C$ is complete.
\end{cor}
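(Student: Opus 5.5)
The plan is to combine Lemma \ref{lem:BS2017} with Lemma \ref{lem:burr-erdos3.2} and the fact that a syndetic set plus a thick set is cofinite. First we check that the lemma applies. Since $\Delta(B_1),\Delta(B_2)<\infty$ and $B_1,B_2$ are infinite, Lemma \ref{lem:burr-erdos3.2} shows that $\FS(B_1)$ and $\FS(B_2)$ are syndetic. Next we need \eqref{eq:C-irrational-L1divergence} and \eqref{eq:C-modular} for $C$. The first is immediate, since $\Delta(C)<\infty$ is not needed for it: the hypothesis in Corollary \ref{cor:3set} is stated directly, so \eqref{eq:C-irrational-L1divergence} holds as given. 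The second follows from \eqref{eq:C-irrational-L1divergence} together with \eqref{eq:gcd-tail}, via Proposition \ref{prop:modular}; indeed \eqref{eq:C-modular} is also assumed directly here, so both conditions are available.

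Applying Lemma \ref{lem:BS2017}, we obtain a finite set $E\subseteq C$ such that $\FS(B_1\cup B_2\cup E)$ is thick. Put $C':=C\setminus E$. The set $C'$ is infinite because $C$ is infinite and $E$ is finite. By \eqref{eq:Delta(S-T)},
\[
\Delta(C')\le\Delta(C)+\sum_{e\in E}e<\infty,
\]
so Lemma \ref{lem:burr-erdos3.2} shows that $\FS(C')$ is syndetic.

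The sets $B_1\cup B_2\cup E$ and $C'$ are disjoint. Hence
\[
\FS(A)\supseteq\FS(B_1\cup B_2\cup E)+\FS(C'),
\]
and the right-hand side is a thick set plus a syndetic set. By \cite[Lemma 1]{Graham64a} it is cofinite, so $A$ is complete.

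There is no real obstacle here; the only point needing care is removing the finite set $E$ from $C$. It is handled by \eqref{eq:Delta(S-T)}, which keeps $\FS(C\setminus E)$ syndetic while $E$ is used to make the other component thick.
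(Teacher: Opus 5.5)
Your proof is correct and follows the paper's argument exactly: syndeticity of $\FS(B_1),\FS(B_2)$ via Lemma \ref{lem:burr-erdos3.2}, a finite $E\subseteq C$ from Lemma \ref{lem:BS2017}, syndeticity of $\FS(C\setminus E)$ via \eqref{eq:Delta(S-T)}, and then the fact that a thick set plus a syndetic set is cofinite, with disjointness giving the inclusion into $\FS(A)$. Your aside deriving \eqref{eq:C-modular} from \eqref{eq:C-irrational-L1divergence} and \eqref{eq:gcd-tail} should be dropped, since \eqref{eq:gcd-tail} is not among the hypotheses; it is harmless only because \eqref{eq:C-modular} is assumed outright.
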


\begin{proof}
By Lemma \ref{lem:burr-erdos3.2}, the sets $\FS(B_1)$ and $\FS(B_2)$ are
syndetic. Lemma \ref{lem:BS2017} therefore supplies a finite set
$E\subseteq C$ such that $\FS(B_1\cup B_2\cup E)$ is thick. By \eqref{eq:Delta(S-T)} and Lemma \ref{lem:burr-erdos3.2}, the set $\FS(C\setminus E)$ is syndetic. So $\FS(B_1\cup B_2\cup E)+\FS(C\setminus E)$ is cofinite. Moreover, disjointness implies
\[
  \FS(B_1\cup B_2\cup E)+\FS(C\setminus E)
  \subseteq
  \FS(A).
\]
Thus $A$ is complete.
\end{proof}

Together with Proposition \ref{prop:modular} in Appendix, Corollary \ref{cor:3set} enables us to handle the case $\Delta(C)<\infty$ of Theorem \ref{thm:strong-3set}. We now proceed to develop the necessary tools for the treatment of the case when countability of $H_2(C)$ is assumed instead. To this end, we recall some definitions and facts.

For any $X\subseteq\Z$, we write
\[
 d^*(X):=\limsup_{N\to\infty}\sup_{m\in\Z}
 \frac{|X\cap[m+1,m+N]|}{N}
\]
for its upper Banach density. It is easy to see that $d^*(X)>0$ for any syndetic set $X\subseteq\N$.

For a probability measure $\mu$ on $\Z$, denote its support by
write
\[
 \operatorname{supp}\mu:=\{m\in\Z:\mu(\{m\})>0\},
\]
and define its Fourier transform by
\[\widehat\mu(\theta):=\int_{\Z}\e^{2\pi i m\theta}\,d\mu(m)=\sum_{m\in\Z}\mu(m)\e^{2\pi i m\theta},\quad\forall\theta\in\T.\]
Given probability measures $\mu_1$ and $\mu_2$ on $\Z$, the convolution $\mu_1*\mu_2$ is a probability measure on $\Z$ defined by 
\[(\mu_1*\mu_2)(n):=\sum_{m\in\Z}\mu_1(m)\mu_2(n-m),\quad\forall n\in\Z.\]
A standard fact in Fourier analysis states that $\widehat{\mu_1*\mu_2}=\widehat\mu_1\cdot\widehat\mu_2$.

Following Hartman \cite{Hartman68} and, for the measure formulation, Griesmer \cite[Definition 10.1]{Griesmer26}, a sequence $\{\mu_n\}_{n\ge1}$ of probability measures on $\Z$ is called \textit{Hartman uniformly distributed} if 
\[
 \lim_{n\to\infty}\widehat\mu_n(\theta)=0
\]
for every $\theta\in\T\setminus\{0\}$. The following result, due to Griesmer \cite{Griesmer12}, provides a recipe for producing a thick sumset.

\begin{lem}\label{Lem:thick-sumset}
Let $X,Y\subseteq\Z$ with $d^*(X)>0$. Suppose that there is a
Hartman uniformly distributed sequence of probability measures $\{\mu_n\}_{n\ge1}$ on $\Z$, each supported on $Y$. Then $X+Y$ is thick.
\end{lem}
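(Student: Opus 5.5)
The plan is to combine Furstenberg's correspondence principle with a mean ergodic theorem for the averaging operators attached to $\mu_n$. Fix $L\in\N$. It suffices to find $m\in\Z$ with $m+j\in X+Y$ for every $0\le j<L$.

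\textbf{Step 1 (correspondence).} Choose intervals $I_N$ with $|I_N|\to\infty$ and $|X\cap I_N|/|I_N|\to d^*(X)>0$. Let $\Omega=\{0,1\}^{\Z}$ with the shift $T$, and let $\omega_0=1_X$. Take a weak$^*$ limit point $\nu$ of the empirical measures $|I_N|^{-1}\sum_{m\in I_N}\delta_{T^m\omega_0}$; it is $T$-invariant. Put $E=\{\omega:\omega(0)=1\}$, so $\nu(E)=d^*(X)$. Every finite Boolean combination $Z$ of the sets $T^{-k}E$ is clopen. Hence $\nu(Z)$ is the limit along the subsequence of the proportion of $m\in I_N$ with $T^m\omega_0\in Z$. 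In particular, $\nu(Z)>0$ forces some $m$ with $T^m\omega_0\in Z$. I will apply this to
\[
Z=\bigcap_{j<L}\ \bigcup_{y\in F}T^{-(j-y)}E,
\]
with $F\subseteq Y$ finite. A point $m$ with $T^m\omega_0\in Z$ gives, for each $j<L$, some $y\in F$ with $m+j-y\in X$. Thus $m+[0,L)\subseteq X+F\subseteq X+Y$.

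\textbf{Step 2 (mean ergodic theorem for $\mu_n$).} Let $U$ be the Koopman operator $Uf=f\circ T$ on $L^2(\nu)$, and set $A_n=\sum_y\mu_n(y)U^{-y}$. For any $f$ with spectral measure $\sigma_f$ on $\T$,
\[
\|A_nf-Pf\|^2=\int_{\T\setminus\{0\}}|\widehat\mu_n(\theta)|^2\,d\sigma_f(\theta),
\]
up to the sign convention, where $P$ is the projection onto $U$-invariant functions. The Hartman hypothesis and dominated convergence then give $A_nf\to Pf$ in $L^2$. Take $f_j=U^{j}1_E=1_{T^{-j}E}$. Since $P$ commutes with $U$, we get $Pf_j=P1_E=:h$ for all $j$. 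Moreover $h\ge0$ and $\int h\,d\nu=\nu(E)>0$, so $\nu(\{h>0\})>0$.

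\textbf{Step 3 (conclude).} Choose $\eta>0$ with $\nu(\{h>\eta\})>0$. For large $n$, each of the $L$ functions $A_nf_j$ is within $\epsilon$ of $h$ in $L^2$. By Chebyshev, $\nu\big(\{h>\eta\}\cap\bigcap_{j<L}\{A_nf_j>\eta/2\}\big)>0$ once $\epsilon$ is small relative to $\eta$, $L$, and $\nu(\{h>\eta\})$. Next truncate $\mu_n$ to a finite set $F\subseteq\operatorname{supp}\mu_n\subseteq Y$ carrying mass greater than $1-\eta/4$. Since $0\le f_j\le1$, the truncated averages still exceed $\eta/4>0$ on that positive-measure set. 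Wherever $\sum_{y\in F}\mu_n(y)f_j\circ T^{-y}>0$, the point lies in $\bigcup_{y\in F}T^{-(j-y)}E$, up to the sign bookkeeping, which I will fix consistently with Step 1. Hence $\nu(Z)>0$, and Step 1 finishes the proof.

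The main obstacle I expect is Step 1's transfer. The correspondence must handle sets built from unions inside intersections, not just intersections, so I will use the explicit generic-point/clopen-cylinder construction rather than the black-box inequality $d^*(\bigcap)\ge\nu(\bigcap)$. The sign conventions linking $T^{-k}E$ to translates of $X$ also need care, but they are routine.
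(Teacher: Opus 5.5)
Your proof is correct. It takes a genuinely different route from the paper, which gives no argument of its own: it simply invokes Griesmer's Theorem 1.4(2) with $A=Y$, $B=X$, $\nu_n=\mu_n$, after observing that $\mu_n(Y)=1$ gives $d_\mu(Y)=\limsup_n\mu_n(Y)=1$.

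You instead prove the needed case from scratch, in two pieces.
\begin{itemize}
\item A correspondence principle built from a weak$^*$ limit of empirical measures on $\{0,1\}^{\Z}$. Since $Z$ is clopen, this transfers the union-inside-intersection set $Z$ directly, and no correspondence inequality is needed.
\item A mean ergodic theorem for the averages $A_n=\sum_y\mu_n(y)U^{-y}$. It follows from $\|A_nf-Pf\|^2=\int_{\T\setminus\{0\}}|\widehat\mu_n|^2\,d\sigma_f$ and dominated convergence.
\end{itemize}

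The details close up as follows.
\begin{itemize}
\item With $(T\omega)(k)=\omega(k+1)$ one has $U^{-y}U^{j}1_E=1_{T^{-(j-y)}E}$. This is exactly the set appearing in $Z$, so no sign bookkeeping actually remains.
\item You do not even need $h\ge 0$. Since $h$ is real and $\int h\,d\nu=\nu(E)>0$, the set $\{h>0\}$ already has positive measure.
\end{itemize}

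The citation buys brevity. Your route buys a self-contained argument that shows exactly where each hypothesis enters:
\begin{itemize}
\item Hartman uniformity makes the $\mu_n$-averages of all the shifts $1_{T^{-j}E}$, $0\le j<L$, converge simultaneously to the single invariant function $P1_E$.
\item Full mass on $Y$ is what lets you truncate to a finite $F\subseteq Y$ at a cost of only $\eta/4$.
\end{itemize}

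Your argument also covers a slightly weaker hypothesis. The $L^2$ convergence holds along the whole sequence, so you only need one sufficiently large $n$ with $\mu_n(Y)>1-\eta/8$. Hence the same proof works under $\limsup_n\mu_n(Y)=1$, which is precisely the form of the hypothesis in Griesmer's theorem as the paper applies it.
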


\begin{proof}
This follows from \cite[Theorem 1.4(2)]{Griesmer12} applied with $A=Y$, $B=X$, and $\nu_n=\mu_n$. The assumption that each $\mu_n$ is supported on $Y$ implies that $\mu_n(Y)=1$ for every $n$, and hence
\[
 d_\mu(Y):=\limsup_{n\to\infty}\mu_n(Y)=1.
\]
Together with $d^*(X)>0$, this verifies all the conditions required by \cite[Theorem 1.4(2)]{Griesmer12}, which then tells us that $A+B$ is thick.
\end{proof}

The next lemma allows us to construct Hartman uniformly distributed sequences of probability measures with certain support properties needed for our application of Lemma \ref{Lem:thick-sumset}.

\begin{lem}\label{lem:Hartman-H_1-H_2}
Let $D\subseteq\N$ and suppose that $H_2(D)$ is countable. Then $H_1(D)=\{0\}$ if and only if for every $N\in\N$ there is a Hartman uniformly distributed sequence $\{\mu_r\}_{r\ge1}$ of finitely supported probability measures on $\Z$ such that $\operatorname{supp}\mu_r\subseteq\FS(D\cap[N,\infty))$ for every $r\ge1$ and $\min\operatorname{supp}\mu_r\to\infty$ as $r\to\infty$.
\end{lem}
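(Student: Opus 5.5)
The plan is to prove the two implications separately. The direction ``Hartman sequence $\Rightarrow H_1(D)=\{0\}$'' is elementary and does not use countability of $H_2(D)$. For the converse I would build each $\mu_r$ as a convolution of finitely many pieces, each supported on subset sums of its own block of $D\cap[N,\infty)$, with the blocks pairwise disjoint. Then the support of the convolution lies in $\FS(D\cap[N,\infty))$, and $|\widehat{\mu_r}(\theta)|\le\min_i|\widehat{\nu_i}(\theta)|$ over the pieces $\nu_i$, since every $|\widehat{\nu_i}|\le 1$. So it is enough to kill each relevant $\theta$ with one piece. Pushing all blocks into $[L_r,\infty)$ with $L_r\to\infty$ gives $\min\operatorname{supp}\mu_r\ge L_r\to\infty$.

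\emph{Necessity.} Suppose $0\ne\theta\in H_1(D)$. I would choose $N$ with $\sum_{d\in D,\,d\ge N}\|d\theta\|<1/8$. By the triangle inequality, every $n\in\FS(D\cap[N,\infty))$ then satisfies $\|n\theta\|<1/8$, so $\operatorname{Re}\e^{2\pi i n\theta}\ge\cos(\pi/4)$. Hence $\operatorname{Re}\widehat{\mu_r}(\theta)\ge 1/\sqrt2$ for every $\mu_r$ supported there, which contradicts Hartman uniform distribution.

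\emph{Sufficiency, $\theta\notin H_2(D)$.} For a finite block $F\subseteq D$, take the Bernoulli piece
\[
\beta_F:=\delta_{\min F}*\mathop{\ast}_{d\in F\setminus\{\min F\}}\tfrac12(\delta_0+\delta_d).
\]
Its support lies in $\FS(F)$, because the forced summand $\min F$ prevents the empty sum. Its Fourier transform has modulus $\prod_{d}|\cos(\pi d\theta)|\le\exp\bigl(-c\sum_{d\in F\setminus\{\min F\}}\|d\theta\|^2\bigr)$. So $\widehat{\beta_F}(\theta)$ is small as soon as $\sum_{d\in F}\|d\theta\|^2$ is large. The blocks would be $F_r=D\cap[L_r,M_r]$ with $M_r$ growing very fast relative to $L_r$, in the hope that $\sum_{d\in F_r}\|d\theta\|^2\to\infty$ for every $\theta\notin H_2(D)$.

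\emph{Sufficiency, $\theta\in H_2(D)\setminus\{0\}$.} These $\theta$ form a countable set $\{\theta_1,\theta_2,\dots\}$, and for each of them $\sum\|d\theta_i\|^2<\infty$ but $\sum\|d\theta_i\|=\infty$. So along a tail, $\|d\theta_i\|$ is small while the signed increments $\pm\|d\theta_i\|$ sum to $\pm\infty$ in at least one sign class. Using only elements of that sign class, the partial sums $s_j$ of a long finite run $d_1<d_2<\cdots$ in a fresh block satisfy that $s_j\theta_i$ winds around $\T$ many times with tiny steps. I would take the weighted average $\gamma=\sum_j w_j\delta_{s_j}$ with $w_j\propto\|d_{j+1}\theta_i\|$. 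This is a Riemann sum for $\int\e^{2\pi i x}\,dx$ over many full turns, so $|\widehat\gamma(\theta_i)|$ is as small as desired. Its support lies in $\FS$ of the block. In $\mu_r$ I would include such pieces for $\theta_1,\dots,\theta_r$, each made $\le 1/r$ at its own $\theta_i$, on disjoint blocks beyond $L_r$, and convolve with $\beta_{F_r}$.

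The main obstacle is the uncountable part. Hartman uniform distribution is pointwise, but for a fixed lower endpoint $L$ the rate at which $\sum_{d\in D\cap[L,M]}\|d\theta\|^2\to\infty$ as $M\to\infty$ is not uniform in $\theta$. So it is unclear that a single choice of $(L_r,M_r)$ forces divergence for every $\theta\notin H_2(D)$ simultaneously. What I would try first is a diagonal choice: take $M_r$ so large that $\sum_{D\cap[L_r,M_r]}\|d\theta\|^2\ge r$ holds off a small exceptional set $E_r$. Then I would arrange, through a Borel--Cantelli/compactness argument on the closed sets $\{\theta:\sum_{D\cap[L,M]}\|d\theta\|^2\le K\}$, that any $\theta$ lying in infinitely many $E_r$ has $\sum_{d\in D}\|d\theta\|^2<\infty$, i.e. lies in $H_2(D)$. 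Such $\theta$ are then handled by the countable mechanism above. If that fails, I would replace $\mu_r$ by averages of $\beta_{F}$ over many nested blocks, so that slow divergence still produces cancellation.
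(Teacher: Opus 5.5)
There is a genuine gap, in the case $\theta\notin H_2(D)$ of the main direction. The other parts of your proposal are sound: the easy direction, the disjoint-block convolution framework, and your treatment of the countably many $\theta_i\in H_2(D)\setminus\{0\}$. Your Riemann-sum average over partial sums is a valid substitute for the paper's factor $\tfrac12(\delta_0+\delta_{s})$ with $s\theta_i$ close to $\tfrac12$. The case $\theta\notin H_2(D)$, however, you leave open, and the repair you sketch cannot work in general.

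Because you insist that the Bernoulli block $F_r=D\cap[L_r,M_r]$ start at $L_r\to\infty$, you need $\sum_{d\in F_r}\|d\theta\|^2\to\infty$ for all $\theta\notin H_2(D)$ at once. Consider the set
\[
 \bigcap_{R\ge1}\,\bigcup_{r\ge R}\Bigl\{\theta\in\T:\sum_{d\in F_r}\|d\theta\|^2<\tfrac1{16}\Bigr\}.
\]
It is a $G_\delta$ set, since each window sum is continuous in $\theta$. It contains $H_2(D)$, since for $\theta\in H_2(D)$ the window sums are bounded by tails of a convergent series, which tend to $0$ as $L_r\to\infty$.

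Now suppose $H_2(D)$ is dense in $\T$. The hypotheses allow this: take $D=\{F_n\}\cup\{F_n+m_n\}$ with $F_n$ the Fibonacci numbers and $m_n\le F_n$ chosen so that $\|m_n\varphi\|\asymp 1/n$. This $D$ has bounded ratios and $H_1(D)=\{0\}$, while $H_2(D)\supseteq\{b\varphi:b\in\Z\}$. In that case Baire's theorem puts uncountably many $\theta\notin H_2(D)$ in the set above, no matter how the $M_r$ are chosen. At each such $\theta$, for infinitely many $r$,
\[
|\widehat{\beta_{F_r}}(\theta)|\ge 1-\tfrac{\pi^2}{2}\cdot\tfrac1{16}>0 .
\]
So your claim that points lying in infinitely many exceptional sets $E_r$ belong to $H_2(D)$ is false. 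Nothing else in your $\mu_r$ is designed to help at these $\theta$.

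The missing idea is that the Bernoulli factor need not live far out at all. The lemma only asks that $\operatorname{supp}\mu_r\subseteq\FS(D\cap[N,\infty))$ for the fixed $N$ and that $\min\operatorname{supp}\mu_r\to\infty$. The latter is achieved by a single forced summand. The paper proceeds as follows:
\begin{enumerate}
\item it takes $\nu_r=\mathop{*}_{n=N}^{L_r}\tfrac12(\delta_0+\delta_{d_n})$, with the starting index $N$ fixed and $L_r\to\infty$;
\item it places the $H_2$-pieces on index blocks beyond $L_r$;
\item it convolves with $\delta_{d_{m_r}}$ for an index $m_r$ beyond everything used.
\end{enumerate}
Then every support point is at least $d_{m_r}\to\infty$. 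For $\theta\notin H_2(D)$,
\[
 |\widehat{\mu_r}(\theta)|\le\exp\Bigl(-\pi\sum_{n=N}^{L_r}\|d_n\theta\|^2\Bigr)\to0
\]
simply because the series started at the fixed index $N$ diverges at $\theta$. The convergence is pointwise, and no uniformity in $\theta$ is needed. Your own $\beta_F$ already contains this device: force a far-out element instead of $\min F$, and let the Bernoulli part start at $N$.
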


\begin{proof}
Note that countability of $H_2(D)$ implies that $D$ is infinite. For the sufficiency part, let $\{\mu_r\}_{r\ge1}$ be a Hartman uniformly distributed sequence of finitely supported probability measures with the asserted properties. Fix
$\theta\in H_1(D)$ and choose $N\in\N$ so large that
\[
 \sum_{d\in D\cap[N,\infty)}\|d\theta\|<\frac{1}{6}.
\]
Every $m\in\FS(D\cap[N,\infty))$ then satisfies
$\|m\theta\|<1/6$, and consequently
\[\text{Re}\big(\widehat\mu_r(\theta)\big)=\sum_{m\in\Z}\mu_r(m)\cos\big(2\pi m\theta\big)>\frac{1}{2}\]
for all $r\ge 1$, thanks to $\operatorname{supp}\mu_r\subseteq\FS(D\cap[N,\infty))$. Since $\{\mu_r\}_{r\ge1}$ is Hartman uniformly distributed, this forces $\theta=0$. Hence, $H_1(D)=\{0\}$.

To prove the necessity part, suppose that $H_1(D)=\{0\}$. Fix $N\in\N$ and enumerate elements of $D$ as a strictly increasing sequence $D=\{d_n\}_{n\ge1}$. Since $H_2(D)\setminus\{0\}$ is countable, we may enumerate it as
\[
 H_2(D)\setminus\{0\}=\{\theta_j:1\le j<k\},
\]
where $k\in\N\cup\{\infty\}$. For every $1\le j<k$, choose $x_n^{(j)}\in[-1/2,1/2)$ such that $x_n^{(j)}=d_n\theta_j\pmod* 1$. Then $\big|x_n^{(j)}\big|=\|d_n\theta_j\|$.
Since $\theta_j\in H_2(D)$, the series
$\sum_n\big|x_n^{(j)}\big|^2$ converges, and in particular
$x_n^{(j)}\to0$ as $n\to\infty$. On the other hand, $\theta_j\ne0$ and
$H_1(D)=\{0\}$ imply $\sum_n\big|x_n^{(j)}\big|=\infty$.

We claim that given $1\le j<k$, $K>0$ and $\epsilon\in(0,1/2)$, there is a nonempty finite set $I\subseteq\N_{>K}$ such that
\begin{equation}\label{eq:sum-half-approx}
 \left\|\sum_{n\in I}d_n\theta_j-\frac12\right\|
 <\epsilon.
\end{equation}
Indeed, let $K_1>K$ be an integer such that $\big|x_n^{(j)}\big|<\epsilon$ for all $n\ge K_1$. Then at least one of the two subseries
\[
 \sum_{\substack{n>K_1\\x_n^{(j)}>0}}x_n^{(j)},
 \qquad
 \sum_{\substack{n>K_1\\x_n^{(j)}<0}}(-x_n^{(j)})
\]
diverges. It suffices to take the convergent one, choose the unique integer $K_2>K_1$ for which its partial sum over $K_1\le n\le K_2$ lies in $[1/2,1/2+\epsilon)$, and declare $I=[K_1,K_2]\cap\N$.

We now construct the required probability measures $\{\mu_r\}_{r\ge1}$ using \eqref{eq:sum-half-approx}. Fix arbitrary sequences $\{\epsilon_r\}_{r\ge1}\subseteq(0,1/4)$ and $\{L_r\}_{r\ge1}\subseteq \N_{\ge N}$ such that $\epsilon_r\to0$ and $L_r\to\infty$ as $r\to\infty$. For each $r\ge1$, let $k_r:=\min\{k,r\}$. Define the probability measure
\[
\nu_r:=\mathop{*}_{n=N}^{L_r}\frac{\delta_0+\delta_{d_n}}2,
\]
where $\delta_m$ denotes the Dirac measure at $m$ for every $m\in\Z$. Equivalently, $\nu_r$ is the distribution of $\sum_{n=N}^{L_r}d_n\mathbf{X}_n$, where the $\mathbf{X}_n$ are independent Bernoulli random variables taking the values $0$ and $1$ with probability $1/2$ each. Successively for $j=1,2,\ldots,k_r-1$, we apply \eqref{eq:sum-half-approx} with 
$
 K=\max(\{L_r\}\cup\bigcup_{1\le\ell<j}I_{r,\ell})
$ and $\epsilon=\epsilon_r$ to obtain pairwise disjoint finite sets $I_{r,j}\subseteq\N_{>L_r}$, such that for each $1\le j<k_r$,
\begin{equation}\label{eq:s_{r,j}-1/2-approx}
\left\|s_{r,j}\theta_j-\frac12\right\|
 <\epsilon_r,    
\end{equation}
where
\[
 s_{r,j}:=\sum_{n\in I_{r,j}}d_n.
\]
Choose an arbitrary integer
\[
 m_r>\max\left(\{L_r\}\cup
       \mathop{\bigcup}_{j=1}^{k_r-1}I_{r,j}\right)
\]
and define
\[
 \mu_r:=\delta_{d_{m_r}}*\nu_r*
        \mathop{*}_{j=1}^{k_r-1}
        \frac{\delta_0+\delta_{s_{r,j}}}{2}.
\]
Equivalently, $\mu_r$ is the distribution of \[d_{m_r}+\sum_{n=N}^{L_r}d_n\mathbf{X}_n+\sum_{j=1}^{k_r-1}\mathbf{Y}_j\sum_{n\in I_{r,j}}d_n,\] 
where the $\mathbf{X}_n,\mathbf{Y}_j$ are independent Bernoulli random variables taking the values $0$ and $1$ with probability $1/2$ each. It is clear that $\mu_r$ is finitely supported with $\operatorname{supp}\mu_r\subseteq\FS(\{d_n:n\ge N\})$.
Moreover, 
\[
 \min\operatorname{supp}\mu_r\ge d_{m_r}>d_{L_r}
 \to\infty
\]
as $r\to\infty$.

It remains to verify Hartman uniformity.  For an arbitrary $\theta\in\T$, Fourier multiplicativity leads to
\begin{align}
 \widehat\mu_r(\theta)
 &=\e^{2\pi i d_{m_r}\theta}
   \prod_{n=N}^{L_r}\frac{1+\e^{2\pi i d_n\theta}}2
   \prod_{j=1}^{k_r-1}\frac{1+\e^{2\pi i s_{r,j}\theta}}2.
 \label{eq:Fourier-mu_r}
\end{align}
First suppose that $\theta\notin H_2(D)$. Then
\[\big|\widehat\mu_r(\theta)\big|\le\prod_{n=N}^{L_r}|\cos(\pi d_n\theta)|
\le\exp\left(-\pi\sum_{n=N}^{L_r}\| d_n\theta\|^2\right)
 \to0
\]
as $r\to\infty$, where we have used the inequality 
\[|\cos(\pi t)|\le 1-\pi t^2\le \e^{-\pi t^2}\] 
for $t\in[-1/2,1/2]$, together with $\sum_n\| d_n\theta\|^2=\infty$.

Now suppose that $\theta\in H_2(D)\setminus\{0\}$, say $\theta=\theta_{j_0}$ with $0\le j_0< k$. For every $r>j_0$, we have $k_r-1\ge j_0$, so that \eqref{eq:s_{r,j}-1/2-approx} and \eqref{eq:Fourier-mu_r} yield
\[
 \big|\widehat\mu_r(\theta)\big|\le|\cos(\pi s_{r,j_0}\theta_{j_0})|
 \le\pi\epsilon_r.
\]
In particular, $\widehat\mu_r(\theta)\to0$ as $r\to\infty$.

This shows that $\{\mu_r\}_{r\ge1}$ is Hartman uniformly distributed. The proof is therefore complete.
\end{proof}

We are now ready to prove Theorem \ref{thm:strong-3set} by packaging the preceding results in this section.

\begin{proof}[Proof of Theorem \ref{thm:strong-3set}]
Let $E\subseteq A$ be finite and put $A'=A\setminus E$, $B_j'=B_j\setminus E$ for $j=1,2$, and $C'=C\setminus E$. By \eqref{eq:Delta(S-T)} we have
\[
  \Delta(B_1'),\ \Delta(B_2')<\infty.
\]
In addition, $H_1(C')=\{0\}$, since $H_1(C)=\{0\}$ and deleting finitely many nonnegative terms does not affect divergence. It suffices to show that $A'$ is complete.

Suppose first that $\Delta(C)<\infty$. Then $\Delta(C')<\infty$ as well. Note that $H_1(C')=\{0\}$ implies \eqref{eq:C-irrational-L1divergence} for $\theta\in\R\setminus\Q$ with $C$ replaced by $C'$, and, together with
Proposition \ref{prop:modular}, gives \eqref{eq:C-modular} with $C'$ in place of $C$. Corollary \ref{cor:3set} applied to 
\[A'=B_1'\cup B_2'\cup C'\] 
shows that $A'$ is complete. 

Now suppose that $H_2(C)$ is countable. Then $H_2(C')$ is also countable. By Lemma \ref{lem:burr-erdos3.2}, $\FS(B_1')$ and $\FS(B_2')$ are syndetic. In particular, $d^*(\FS(B_1'))>0$.
Lemma \ref{lem:Hartman-H_1-H_2} therefore supplies a sequence of Hartman uniformly distributed
probability measures supported on $\FS(C')$, and
Lemma \ref{Lem:thick-sumset} then makes $ \FS(B_1')+\FS(C')$ thick. It follows that $\FS(B_1')+\FS(B_2')+\FS(C')$ is cofinite. Since $B_1',B_2',C'$ are pairwise disjoint, we have 
\[\FS(A')\supseteq\FS(B_1')+\FS(B_2')+\FS(C').\] 
In particular, $\FS(A')$ is cofinite. Hence, $A'$ is complete. 
\end{proof}

\begin{rmk}
The two hypotheses on $C$ in Theorem \ref{thm:strong-3set}, namely $\Delta(C)<\infty$ and countability of $H_2(C)$, characterize different phenomena and do not imply each other even under $H_1(C)=\{0\}$. Using Lemma \ref{lem:bounded-ratio} in the next section, it is easy to see that $C_1:=\{3^n+n:n\in\N\}$ has trivial $H_1$-spectrum and countable $H_2$-spectrum but infinite $\Delta$-value. On the other hand, the set 
\[C_2:=\bigcup_{k\ge 1}\big\{n_k,2n_k,\ldots,2^{k+2}n_k\big\},\]
where the sequence $\{n_k\}_{n\ge1}$ is defined recursively via $n_1=1$ and $n_{k+1}=2^{k+1}(2^{k+2}+1)n_k+1$ for $k\ge1$, satisfies $H_1(C_2)=\{0\}$ and $\Delta(C_2)=1$, but $H_2(C_2)$ is uncountable. We leave the details to the interested reader.
\end{rmk}

With Theorem \ref{thm:strong-3set} at hand, the task of proving Theorem \ref{thm:main} reduces to the construction of a partition of $A$ with the specified properties.

\section{Spectral countability and deletion}\label{S:spectral-deletion}
In this section we gather a few technical lemmas needed for the proof of Theorem \ref{thm:main}. Our first lemma below ensures countability of the $H_q$-spectrum of any bounded-ratio set. This countability phenomenon is classical and goes back to Eggleston \cite{Eggleston52}. Erd\H{o}s and Taylor \cite{ErdosTaylor57} later obtained more general results. We include a short direct proof adapted to the present formulation.

\begin{lem}\label{lem:bounded-ratio}
Fix $\lambda>1$, and let $\{a_n\}_{n\ge1}$ be an unbounded sequence of positive integers such that
\begin{equation}\label{eq:bounded-ratio}
  a_{n+1}\le\lambda a_n,
  \quad\forall n\ge1.
\end{equation}
Then 
\[
  E(a):=\{\theta\in\T:\|a_n\theta\|\to0\text{ as }n\to\infty\}
\]
is countable. Consequently $H_q(\{a_n:n\ge1\})$ is also countable for every $q>0$.
\end{lem}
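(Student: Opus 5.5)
The plan is to show that every $\theta\in E(a)$ is determined by a finite amount of data, namely an initial index together with one integer, and then to bound how many such data there are.

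\emph{Setup.} Fix $\theta\in E(a)$ and pick $\epsilon>0$ small in terms of $\lambda$, say $\epsilon<1/(4(\lambda+1))$. Choose $n_0$ with $\|a_n\theta\|<\epsilon$ for all $n\ge n_0$. For $n\ge n_0$ write
\[
a_n\theta=p_n+x_n,\qquad p_n\in\Z,\quad |x_n|<\epsilon .
\]

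\emph{Key step: $p_{n+1}$ is determined by $p_n$.} Take a real representative $\theta\in[0,1)$. Then
\[
\frac{p_{n+1}+x_{n+1}}{a_{n+1}}=\theta=\frac{p_n+x_n}{a_n}.
\]
Hence $p_{n+1}=\dfrac{a_{n+1}}{a_n}(p_n+x_n)-x_{n+1}$, and so
\[
\Big|p_{n+1}-\frac{a_{n+1}}{a_n}p_n\Big|\le \frac{a_{n+1}}{a_n}|x_n|+|x_{n+1}|<(\lambda+1)\epsilon<\tfrac14 ,
\]
using the hypothesis $a_{n+1}\le\lambda a_n$. Therefore $p_{n+1}$ is the unique integer nearest to $a_{n+1}p_n/a_n$. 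By induction the whole sequence $(p_n)_{n\ge n_0}$ is determined by the pair $(n_0,p_{n_0})$.

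\emph{Recovering $\theta$.} Since $\{a_n\}$ is unbounded, we can take a subsequence with $a_n\to\infty$. Along it,
\[
\theta=\lim \frac{p_n}{a_n},
\]
because $|x_n|/a_n\to0$. So $\theta$ is also determined by $(n_0,p_{n_0})$.

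\emph{Counting.} With $\theta\in[0,1)$ we have $0\le p_{n_0}\le a_{n_0}$, so for each $n_0$ there are only finitely many possible values of $p_{n_0}$. Thus $E(a)$ is a countable union of finite sets, hence countable.

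\emph{Consequence for $H_q$.} If $\sum_n\|a_n\theta\|^q<\infty$, then $\|a_n\theta\|\to0$, so $H_q(\{a_n\})\subseteq E(a)$. If the $a_n$ are not distinct this needs a little care: the series over the set $\{a_n\}$ counts each value once, and a repeated value can only repeat finitely often, because otherwise the sequence would have to return to that value after growing, and the bounded-ratio condition does not prevent this. To handle it, pass to the set $S=\{a_n\}$ listed increasingly. The increasing enumeration of $S$ still satisfies a bounded-ratio condition. Indeed, for any element $s$ of $S$, the original sequence is unbounded, so it must exceed $s$ eventually; the first time it does so, it takes a value at most $\lambda s$. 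Hence the next element of $S$ after $s$ is at most $\lambda s$. Applying the main argument to this enumeration gives $H_q(S)\subseteq E(\text{enumeration})$, which is countable.

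\emph{Main obstacle.} The only delicate point is the key step: $\epsilon$ must be chosen relative to $\lambda$ so that the nearest integer is unique, and the real lift of $\theta$ must be fixed before comparing $p_n$ and $p_{n+1}$.
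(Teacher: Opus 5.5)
Your proof is correct, but it gets finiteness of the sets $E_N=\{\theta:\|a_n\theta\|<\epsilon\text{ for all }n\ge N\}$ by a different mechanism from the paper.

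\emph{The paper's route.} It proves a separation estimate: distinct points $\theta,\phi$ of $E_N$ satisfy $\psi=\|\theta-\phi\|\ge 1/(4\lambda a_N)$. If not, take the first $n>N$ with $a_n\psi\ge 1/(4\lambda)$. The ratio bound forces $a_n\psi<1/4$. Hence $\|a_n(\theta-\phi)\|=a_n\psi\ge1/(4\lambda)>2\epsilon$, which contradicts the triangle inequality.

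\emph{Your route.} You prove rigidity instead. The nearest integers $p_n$ obey a recursion that does not depend on $\theta$: $p_{n+1}$ is the nearest integer to $a_{n+1}p_n/a_n$. So each $\theta\in E_N$ is recovered from the single integer $p_N\in\{0,\ldots,a_N\}$ as $\lim p_n/a_n$ along a subsequence with $a_n\to\infty$. This is a Pisot-type argument.

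\emph{Comparison.} Your version gives the explicit bound $|E_N|\le a_N+1$ and a reconstruction formula. The paper's version is a purely metric pigeonhole argument with no recursion or limits. When you write yours up, make the injection explicit: $E(a)=\bigcup_N E_N$, and $\theta\mapsto p_N(\theta)$ is injective on $E_N$. Do this rather than letting $n_0$ depend on $\theta$ without fixing a choice.

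\emph{The $H_q$ step.} Delete the sentence asserting that a value can repeat only finitely often. It is false and contradicts itself: with $\lambda=2$, the unbounded sequence $1,2,1,2,4,1,2,4,8,\ldots$ takes every value infinitely often.

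The reduction you then carry out is right, and it is actually needed. In this example $\theta=1/2$ lies in $H_q(\{a_n\})$ but not in $E(a)$. So the paper's one-line inclusion $H_q\subseteq E(a)$ tacitly assumes each value occurs only finitely often. That is harmless in the paper, whose applications use increasing sequences.

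Passing to the increasing enumeration of $S=\{a_n\}$ fixes this. Just phrase the ratio step precisely: if $s=a_m$, the first $n>m$ with $a_n>s$ has $a_{n-1}\le s$, hence $a_n\le\lambda s$.
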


\begin{proof}
The second assertion follows from the first, since the terms of a convergent series eventually approach 0, so $H_q(a)\subseteq E(a)$. To prove the first assertion, fix 
\[
  0<\epsilon<\frac{1}{8\lambda},
\]
and define 
\[
  E_{N}(a):=\{\theta\in\T:\|a_n\theta\|\le\epsilon
  \text{ for every }n\ge N\}
\]
for every $N\in\N$. We claim that $E_{N}(a)$ is finite. Suppose that $\theta,\phi\in E_{N}(a)$ are distinct and put $\psi=\|\theta-\phi\|>0$. We show that
\begin{equation}\label{eq:psi}
  \psi\ge\frac{1}{4\lambda a_N}.
\end{equation}
Once this is proved, we know that $|E_{N}(a)|\le 4\lambda a_N$ and that
\[E(a)\subseteq \bigcup_{N=1}^{\infty}E_{N}(a)\]
is countable. So it is sufficient to verify \eqref{eq:psi}.

Assume to the contrary that \eqref{eq:psi} is false. Since $\{a_n\}_{n\ge1}$ is unbounded, we may choose the least $n>N$ such that
\[a_n\psi\ge\frac{1}{4\lambda}.\]
By minimality of $n$ we also have
\[a_{n-1}\psi<\frac{1}{4\lambda}.\]
Thanks to \eqref{eq:bounded-ratio}, these two inequalities combine to give
\[\frac{1}{4\lambda}\le a_n\psi<\frac{1}{4}.\]
Since $a_n\in\N$ and $a_n\psi\in(0,1/4)$, we have $\|a_n(\theta-\phi)\|=\|a_n\psi\|=a_n\psi\ge1/(4\lambda)$. Hence, the triangle inequality gives
\[
\frac{1}{4\lambda}\le\|a_n(\theta-\phi)\|\le\|a_n\theta\|+\|a_n\phi\|\le2\epsilon<\frac{1}{4\lambda},
\]
which is absurd. This verifies \eqref{eq:psi}.
\end{proof}

Next, we need the following deletion lemma which allows one to delete a collection of finite sets of prescribed sizes while retaining the divergence property. It will be applied in the next section with $r_k=r$ for all sufficiently large $k$ and each $f_j(x)$ taking the form of $\|\theta_j x\|$ with some $\theta_j\in\T\setminus\{0\}$.

\begin{lem}\label{lem:countable-deletion}
Let $\{X_k\}_{k\ge1}$ be a sequence of pairwise disjoint sets such that $r_k<|X_k|<\infty$ for all $k\ge1$, where $r_k\in\Z_{\ge0}$, and set $X=\bigcup_{k\ge1}X_k$. Let $J\subseteq\N$ be countable.  For every $j\in J$, let $f_j:X\to[0,\infty)$ be a function satisfying
\begin{equation}\label{eq:weight-divergence}
  \sum_{k=1}^{\infty}\frac{|X_k|-r_k}{|X_k|}\sum_{x\in X_k}f_j(x)=\infty.
\end{equation}
Then there exist sets $D_k\subseteq X_k$, with $|D_k|=r_k$, such that
\begin{equation}\label{eq:retained-divergence}
  \sum_{k=1}^{\infty}\sum_{x\in X_k\setminus D_k}f_j(x)=\infty,
  \quad\forall j\in J.
\end{equation}
\end{lem}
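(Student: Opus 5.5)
We will prove Lemma \ref{lem:countable-deletion} by a probabilistic averaging argument combined with a diagonal (block-by-block) construction that handles the countably many functions $f_j$ one at a time, infinitely often.

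The basic observation is the following. If $D_k$ is chosen uniformly at random among the $r_k$-subsets of $X_k$, then each $x\in X_k$ survives with probability $(|X_k|-r_k)/|X_k|$. Hence
\[
 \E\sum_{x\in X_k\setminus D_k}f_j(x)=\frac{|X_k|-r_k}{|X_k|}\sum_{x\in X_k}f_j(x).
\]
Thus for each fixed $j$ and each finite block of indices $K_1\le k\le K_2$, there is a choice of $D_k$ for $K_1\le k\le K_2$ whose retained sum is at least the expected value. Since the $X_k$ are finite, this reduces to a finite averaging argument, with no measure theory needed. What matters is that a good choice exists for a single $j$ at a time on each block. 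Ensuring that a single choice works simultaneously for all $j$ is the only real obstacle.

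To overcome this, fix an enumeration $j_1,j_2,\ldots$ of $J$ in which every element of $J$ appears infinitely often. This is possible since $J$ is countable; if $J$ is empty, take any $D_k$. We then construct consecutive finite blocks of indices $[K_{s-1}+1,K_s]$ inductively, with $K_0=0$. At stage $s$, the divergence hypothesis \eqref{eq:weight-divergence} for $j=j_s$ lets us pick $K_s>K_{s-1}$ with
\[
 \sum_{k=K_{s-1}+1}^{K_s}\frac{|X_k|-r_k}{|X_k|}\sum_{x\in X_k}f_{j_s}(x)\ge1,
\]
since the tail of a divergent series of nonnegative terms is still divergent. By the averaging observation applied on this block with $f_{j_s}$, we can choose $D_k\subseteq X_k$ with $|D_k|=r_k$ for $k$ in the block so that the retained sum of $f_{j_s}$ over the block is at least $1$. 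In this way every $D_k$ is defined exactly once, because the blocks partition $\N$.

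Finally, for each $j\in J$ there are infinitely many stages $s$ with $j_s=j$. Each such stage contributes at least $1$ to $\sum_k\sum_{x\in X_k\setminus D_k}f_j(x)$, and all terms are nonnegative, so this sum diverges, which is \eqref{eq:retained-divergence}.

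The hypothesis $r_k<|X_k|$ is used only to guarantee that the weights $(|X_k|-r_k)/|X_k|$ are positive and that the random-subset averaging makes sense. The argument is otherwise routine.
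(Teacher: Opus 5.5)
Your proposal is correct and follows the paper's proof essentially step for step: the same enumeration of $J$ with every index repeated infinitely often, the same consecutive finite blocks on which the weighted sum for $f_{j_s}$ is at least $1$, and the same conclusion from nonnegativity. The only difference is how $D_k$ is chosen on a block: the paper takes $D_k$ to be the $r_k$ elements of $X_k$ with the smallest $f_{j_s}$-values, which is an explicit choice that attains the same lower bound your averaging argument guarantees exists.
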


This result has a simple probabilistic interpretation: For each $k\in\N$, let $D_k\subseteq X_k$ be an $r_k$-element subset chosen uniformly at random. Then the probability that a given $x\in X_k$ is not selected is
\[\PP(x\in X_k\setminus D_k)=1-\frac{r_k}{|X_k|}=\frac{|X_k|-r_k}{|X_k|}.\]
For $j\in J$, define the random variable
\[Y_{j}:=\sum_{k=1}^{\infty}\sum_{x\in X_k\setminus D_k}f_j(x)=\sum_{k=1}^{\infty}\sum_{x\in X_k}f_j(x)1_{x\notin D_k},\]
and put
\[E:=\bigcap_{j\in J}\{Y_j=\infty\}.\]
Lemma \ref{lem:countable-deletion} asserts that if $\E(Y_j)=\infty$ for every $j\in J$, then $E\ne\emptyset$. 

\begin{proof}[Proof of Lemma \ref{lem:countable-deletion}]
The case $J=\emptyset$ is vacuous. So we suppose that $J\ne\emptyset$. Since $J$ is countable, we can choose a sequence $\{i_s\}_{s\ge 1}$ of elements of $J$, repetitions allowed, such that every $j\in J$ occurs infinitely often. 

We build $D_k$ recursively. Set $K_1=1$. Suppose that at stage $s$ the number $K_s$ has been defined and the sets $D_k~(k<K_s)$ have been chosen. Consider $i_s$ in the sequence. By \eqref{eq:weight-divergence}, we can find $L_s\ge K_s$ such that
\[
   \sum_{k=K_s}^{L_s}\frac{|X_k|-r_k}{|X_k|}\sum_{x\in X_k}f_{i_s}(x)\ge 1.
\]
For each $k\in[K_s,L_s]$, let $D_k$ consist of the $r_k$ elements of $X_k$
with the least $f_{i_s}$-values. Thus
\[\frac{1}{|X_k|-r_k}\sum_{x\in X_k\setminus D_k}f_{i_s}(x)\ge\frac{1}{|X_k|}\sum_{x\in X_k}f_{i_s}(x),\]
from which it follows that
\[\sum_{k=K_s}^{L_s}\sum_{x\in X_k\setminus D_k}f_{i_s}(x)\ge\sum_{k=K_s}^{L_s}\frac{|X_k|-r_k}{|X_k|}\sum_{x\in X_k}f_{i_s}(x)\ge 1.\]
Now set $K_{s+1}=L_s+1$ and repeat this process for $i_{s+1}$. Continuing in this way, the consecutive finite intervals $[K_s,L_s]~(s\in\N)$ form a partition of $[1,\infty)$, and each set $D_k\subseteq X_k$ has $|D_k|=r_k$. Since every $j\in J$ appears infinitely often in the sequence $\{i_s\}_{s\ge 1}$, given every $c>0$ and $j\in J$ there exists $s_0=s_0(c,j)\in\N$ such that $j$ occurs at
least $\lceil c\rceil$ times among $i_1,\ldots,i_{s_0}$ and thus
\[\sum_{k=1}^{L_{s}}\sum_{x\in X_k\setminus D_k}f_{j}(x)\ge c,\quad\forall s\ge s_0.\]
Hence, \eqref{eq:retained-divergence} holds for every $j\in J$.
\end{proof}

Finally, we record the following elementary estimate responsible for the constants in \eqref{eq:constants}.

\begin{lem}\label{lem:Delta(S)}
Let $\rho>1$, $r\in\N$, and $S\subseteq\N$. For each $k\ge0$, let 
\[
m_k:=\big|S\cap(\rho^k,\rho^{k+1}]\big|,
\]
and suppose that
\begin{equation}\label{eq:m_k}
 \sum_{j=0}^{r-1}m_{k+j}\rho^j \ge \rho(\rho^r-1)    
\end{equation}
for every sufficiently large $k$, then $\Delta(S)<\infty$. 
\end{lem}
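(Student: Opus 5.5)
We must show $\Delta(S)<\infty$, that is, $s-\sum_{t\in S,\,t<s}t$ is bounded above uniformly in $s\in S$. The natural plan is to compare, for $s\in(\rho^{K},\rho^{K+1}]$, the quantity $s\le\rho^{K+1}$ with a lower bound for the sum of elements of $S$ lying in earlier $\rho$-adic intervals. An element of $S\cap(\rho^i,\rho^{i+1}]$ exceeds $\rho^i$. Hence
\[
\sum_{\substack{t\in S\\ t<s}}t\ \ge\ \sum_{i=k_0}^{K-1}m_i\rho^i\ +\ (\text{elements of }S\cap(\rho^K,s)),
\]
where $k_0$ is chosen so that \eqref{eq:m_k} holds for all $k\ge k_0$. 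The last term is nonnegative and will simply be dropped. So it suffices to show that $\sum_{i=k_0}^{K-1}m_i\rho^i\ge\rho^{K+1}-C$ for a constant $C$ independent of $K$.

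To get this, split the index range $[k_0,K-1]$ into consecutive blocks of length $r$, working backwards from $K-1$. Set $k=K-r,\,K-2r,\dots$ while $k\ge k_0$. Multiplying \eqref{eq:m_k} by $\rho^k$ gives
\[
\sum_{j=0}^{r-1}m_{k+j}\rho^{k+j}\ \ge\ \rho^{k+1}(\rho^r-1)=\rho^{k+r+1}-\rho^{k+1}.
\]
Summing over the blocks $k=K-r,K-2r,\dots,K-\ell r$ telescopes to
\[
\sum_{i=K-\ell r}^{K-1}m_i\rho^i\ \ge\ \rho^{K+1}-\rho^{K-\ell r+1}.
\]
Here $\ell$ is the largest integer with $K-\ell r\ge k_0$, so $K-\ell r<k_0+r$. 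Therefore the error term $\rho^{K-\ell r+1}$ is at most $\rho^{k_0+r}$, a constant independent of $K$. This gives
\[
s-\sum_{\substack{t\in S\\ t<s}}t\le\rho^{K+1}-\rho^{K+1}+\rho^{k_0+r}=\rho^{k_0+r}
\]
whenever $K\ge k_0+r$.

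The finitely many $s\in S$ with $s\le\rho^{k_0+r+1}$ contribute a bounded quantity anyway. Hence $\Delta(S)\le\max\{\rho^{k_0+r+1},\rho^{k_0+r}\}<\infty$.

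The only point needing care is the bookkeeping at the boundary. First, the blocks must be aligned to end exactly at $K-1$ so that the telescoping produces $\rho^{K+1}$ and not a smaller power. Second, the lowest index must be handled so that it stays $\ge k_0$, where the hypothesis is valid. Backward alignment handles both. Note also that if $S$ were finite the hypothesis would fail for large $k$, since the right side of \eqref{eq:m_k} is positive. So $S$ is infinite, though this plays no role beyond consistency with the definition of $\Delta$.
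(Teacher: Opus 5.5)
Your proof is correct and follows essentially the same route as the paper. Both bound $\sum_{t<s}t$ below by $\sum_{i=k_0}^{K-1}m_i\rho^i$, group the indices into length-$r$ blocks aligned backward from $K-1$, and apply the hypothesis to each block to get $\rho^{K+1}$ minus a bounded error; your explicit telescoping to $\rho^{K+1}-\rho^{K-\ell r+1}\ge\rho^{K+1}-\rho^{k_0+r}$ simply makes the paper's $O_{\rho,k_0,r}(1)$ term concrete.
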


\begin{proof}
Suppose that \eqref{eq:m_k} holds for every $k\ge k_0$. If $s\in S\cap(\rho^k,\rho^{k+1}]$ with $k>k_0$, then 
\begin{align*}
\sum_{\substack{t\in S\\t<s}}t\ge\sum_{\ell=k_0}^{k-1}m_{\ell}\rho^\ell
&\ge\sum_{1\le \ell\le (k-k_0)/r}\rho^{k-r\ell}\sum_{j=0}^{r-1}m_{k-r\ell+j}\rho^j\\
&\ge\rho(\rho^r-1)\sum_{1\le \ell\le (k-k_0)/r}\rho^{k-r\ell} \\
&=\rho(\rho^r-1)\left(\frac{\rho^{k-r}}{1-\rho^{-r}}+O_{\rho,k_0,r}(1)\right)\\
&=\rho^{k+1}+O_{\rho,k_0,r}(1).
\end{align*}
Hence
\[
  s-\sum_{\substack{t\in S\\t<s}}t
  \le
  \rho^{k+1}-\left(\rho^{k+1}+O_{\rho,k_0,r}(1)\right)
  \ll_{\rho,k_0,r}1.
\]
This proves $\Delta(S)<\infty$.
\end{proof}

\section{Resolution of Erd\H{o}s's conjecture: Proofs of Theorems \ref{thm:main} and \ref{thm:complete-decomposition}}\label{S:Pf-main}
We are now in a position to deliver the proof of Theorem \ref{thm:main} based on our three-component strong-completeness criterion (Theorem \ref{thm:strong-3set}). Before embarking on the proof, we make an additional observation on the growth of $q_A(n)$.

\begin{lem}\label{lem:q_A}
Let $\rho>1$, $h\in\N$, and $A\subseteq \N$. Suppose that $A=B\cup C$ is a partition such that $C$ is strongly complete and 
\[
\big|B\cap(\rho^k,\rho^{k+1}]\big|\ge h
\]
for every sufficiently large $k$. Then
\[
\lim_{n\to\infty} \frac{q_{A\setminus F}(n)}
 {n^{h\log_\rho2}}=\infty
\]
for every finite subset $F\subseteq A$.
\end{lem}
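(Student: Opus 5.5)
We count representations by splitting each summand set into a part drawn from $B$ and a part drawn from $C$. Fix a finite $F\subseteq A$ and put $B'=B\setminus F$, $C'=C\setminus F$. Since $C$ is strongly complete, $C'$ is complete, so there is $n_0$ with $m\in\FS(C')$ for every integer $m\ge n_0$. For any finite subset $E\subseteq B'$ with $n-\sum_{b\in E}b\ge n_0$, the number $n-\sum_{b\in E}b$ has a representation by distinct elements of $C'$. Together with $E$ this gives a representation of $n$ by elements of $A\setminus F$, because $B'$ and $C'$ are disjoint. Distinct $E$ give distinct representations, since the $B'$-part of each representation is exactly $E$. Hence
\[
q_{A\setminus F}(n)\ \ge\ \#\Big\{E\subseteq B'\ \text{finite}:\ \sum_{b\in E}b\le n-n_0\Big\}.
\]

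To bound this count, take every subset $E$ of $B'\cap[1,y]$ with $y:=(n-n_0)/|B'\cap[1,y]|$, or more simply with $y$ chosen so that $\sum_{b\in B'\cap[1,y]}b\le n-n_0$. A slack of one or two $\rho$-adic blocks handles this. The dyadic-type hypothesis gives $|B\cap(\rho^k,\rho^{k+1}]|\ge h$ for $k\ge k_0$. Removing the finite set $F$ changes counts only in finitely many blocks, so $|B'\cap(\rho^k,\rho^{k+1}]|\ge h$ for $k\ge k_1$.

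We bound the sum of $B'\cap[1,\rho^{K}]$ crudely by $|B'\cap[1,\rho^K]|\rho^K$. This is not good enough, because blocks may contain many elements. So instead we choose, for each $k\in[k_1,K)$, exactly $h$ elements of $B'\cap(\rho^k,\rho^{k+1}]$, and call the resulting set $B''$. Then
\[
\sum_{b\in B''}b\le h\sum_{k<K}\rho^{k+1}\le \frac{h\rho}{\rho-1}\rho^{K}.
\]
Given $n$, let $K$ be maximal with $\frac{h\rho}{\rho-1}\rho^K\le n-n_0$. Then every $E\subseteq B''$ qualifies, and $|B''|=h(K-k_1)$. This gives
\[
q_{A\setminus F}(n)\ge 2^{h(K-k_1)}\gg_{\rho,h,k_1} 2^{h\log_\rho n}=n^{h\log_\rho 2},
\]
since $K=\log_\rho n+O(1)$. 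This yields only a lower bound of order $n^{h\log_\rho2}$, not a ratio tending to infinity.

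The main obstacle is gaining the extra factor that tends to infinity. For this we use the freedom inside $C'$ to take an arbitrary finite subset of it. By the argument in the introduction, for each fixed $j$ we can pick $j$ elements $c_1,\dots,c_j\in C'$ and let $C''=C'\setminus\{c_1,\dots,c_j\}$. This set is still complete, with threshold $n_0(j)$. We then let $E$ range over subsets of $B''\cup\{c_1,\dots,c_j\}$ and complete the remainder from $C''$. All $n_0$-type constants depend only on $j$, so the same computation gives
\[
q_{A\setminus F}(n)\ge 2^{j}\,c_{\rho,h,F}\,n^{h\log_\rho2}
\]
for all $n\ge n_1(j)$, where $c_{\rho,h,F}>0$ does not depend on $j$. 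The one point to check is that this constant is uniform in $j$. We ensure this by placing the extra elements on top of $B''$ with a separate budget, for example by requiring $\sum_{b\in E\cap B''}b\le (n-n_0(j))/2$ and absorbing the finitely many $c_i$ once $n$ is large. Letting $j\to\infty$ then gives $\liminf_{n} q_{A\setminus F}(n)/n^{h\log_\rho2}\ge 2^{j}c$ for every $j$, so the limit is $\infty$.
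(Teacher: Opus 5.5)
Your argument is correct and is essentially the paper's. Both take a subset of $B\setminus F$ with about $h\log_\rho n$ elements and total sum at most roughly $n/2$, complete each of its subsets from $C\setminus F$, and obtain the divergent extra factor from strong completeness of $C$. The difference is minor: the paper quotes \eqref{eq:q_A} to bound the number of completions below by $\min_{m\ge n/2}q_{C\setminus F}(m)\to\infty$, whereas you re-derive that fact inline by moving $j$ elements of $C\setminus F$ into the free part; your explicit choice of $h$ elements per block also makes the cardinality bound cleaner than the paper's ``maximal cardinality'' set $B_n$.
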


\begin{proof}
Given any finite $F\subseteq A$, there exists $n_0\in\N$ such that $n\in\FS(C\setminus F)$ for every $n\ge n_0$. For each $n\ge 2n_0$, let $B_n\subseteq B\setminus F$ be a finite set with maximal cardinality, such that
\[
\sum_{b\in B_n}b\le \frac{n}{2}.
\]
Then 
\[n-\sum_{d\in D}d\in\FS(C\setminus F)\]
for every $n\ge 2n_0$ and every $D\subseteq B_n$. Since
\[|B_n|=h\log_{\rho}n+O_{\rho,h,F}(1),\]
we conclude that
\[
q_{A\setminus F}(n)\gg_{\rho,h,F} 2^{h\log_{\rho}n}\min_{m\ge n/2}q_{C\setminus F}(m)
\]
for every $n\ge 2n_0$. In view of \eqref{eq:q_A} and the identity $2^{h\log_{\rho}n}=n^{h\log_{\rho}2}$, this proves the asserted growth rate for $q_{A\setminus F}(n)$.
\end{proof}

Now we prove Theorem \ref{thm:main}.

\begin{proof}[Proof of Theorem \ref{thm:main}]
Fix $\rho>1$ and $k_0\in \N$. Suppose that $A\subseteq\N$ satisfies \eqref{eq:L1-divergence} and the inequality $|A\cap(\rho^k,\rho^{k+1}]|\ge M\ge M_\rho$ for all $k\ge k_0$, where $M_\rho$ is defined as in \eqref{eq:constants}. Without loss of generality, we may replace $A$ with $A\setminus[1,\rho^{k_0}]$.

We first prove the base case $M=M_{\rho}$ of the theorem. Let $A_k:=A\cap(\rho^k,\rho^{k+1}]$ for $k\ge k_0$. Then $|A_k|\ge M_\rho$. Divide $\N_{\ge k_0}$ into even and odd classes 
\[E:=\{k\ge k_0: 2\mid k\},\quad O:=\{k\ge k_0: 2\nmid k\},\]
and define
\[
 A_E:=\bigcup_{k\in E}A_k,
 \qquad A_O:=\bigcup_{k\in O}A_k.
\]
Choose an arbitrary element from $A_k$ for each $k\in O$ to form a strictly increasing sequence $S_O$ of bounded ratios
less than $\rho^3$. By Lemma \ref{lem:bounded-ratio}, $H_1(S_O)$ is countable. Since $S_O\subseteq A_O$, we know that $H_1(A_O)$ is countable as well. In view of \eqref{eq:L1-divergence}, we find
\begin{equation}\label{eq:even-divergence}
 \sum_{k\in E}\sum_{a\in A_k}\| a\theta\|=\infty,\quad\forall \theta\in H_1(A_O)\setminus\{0\}.
\end{equation}
Since 
\[
 \frac{|A_k|-(M_\rho-1)}{|A_k|}\ge\frac1{M_\rho},
\]
\eqref{eq:even-divergence} implies
\[
  \sum_{k\in E}\frac{|A_k|-(M_\rho-1)}{|A_k|}\sum_{a\in A_k}\| a\theta\|=\infty,\quad\forall \theta\in H_1(A_O)\setminus\{0\}.
\]
Applying Lemma \ref{lem:countable-deletion} to $X_k=A_k$ for $k\in E$ with $r_k=M_\rho-1$ and $f_{\theta}(a)=\|a\theta\|$ for $\theta\in H_1(A_O)\setminus\{0\}$, we find $D_k\subseteq A_k$ for $k\in E$ with $|D_k|=M_\rho-1<|A_k|$ such that
\begin{equation}\label{eq:C_E-divergence}
   \sum_{c\in C_E}\|c\theta\|=\sum_{k\in E}\sum_{a\in A_k\setminus D_k}\|a\theta\|=\infty,
  \quad\forall \theta\in H_1(A_O)\setminus\{0\},
\end{equation}
where
\[C_E:=\bigcup_{k\in E}(A_k\setminus D_k).\]
By \eqref{eq:C_E-divergence}, we see that
\begin{equation}\label{eq:H_1(C_E)-H_1(A_O)}
 H_1(C_E)\cap H_1(A_O)=\{0\}.
\end{equation}

Now choose an arbitrary element from $A_k\setminus D_k$ for each $k\in E$ to form a strictly increasing sequence $S_E$ of bounded ratios less than $\rho^3$. Again, by Lemma \ref{lem:bounded-ratio}, $H_1(S_E)$ is countable, and so is $H_1(C_E)$. Since \eqref{eq:H_1(C_E)-H_1(A_O)} implies
\[ \sum_{k\in O}\sum_{a\in A_k}\| a\theta_j\|=\infty,\quad\forall \theta\in H_1(C_E)\setminus\{0\},\]
Lemma \ref{lem:countable-deletion} once more applied to $X_k=A_k$ for $k\in O$, with $r_k=M_\rho-1$ and $f_{\theta}(a)=\|a\theta\|$ for $\theta\in H_1(C_E)\setminus\{0\}$, produces $D_k\subseteq A_k$ for $k\in O$ with $|D_k|=M_\rho-1<|A_k|$ such that
\begin{equation}\label{eq:C_O-divergence}
   \sum_{c\in C_O}\|c\theta_j\|=\sum_{k\in O}\sum_{a\in A_k\setminus D_k}\|a\theta\|=\infty,\quad\forall \theta\in H_1(C_E)\setminus\{0\},
\end{equation}
where 
\[C_O:=\bigcup_{k\in O}(A_k\setminus D_k).\]

Now we construct the partition needed for the application of Theorem \ref{thm:strong-3set}, which will allow us to conclude that $A$ is strongly complete. Let
\[
x_{\rho}:=\left\lfloor \frac{M_{\rho}-1}{2}\right\rfloor,\qquad
y_{\rho}:=\left\lceil \frac{M_{\rho}-1}{2}\right\rceil.
\]
Then $x_{\rho}+y_{\rho}=M_{\rho}-1$. For each $k\ge k_0$, divide $D_k$ into two subsets
\[
  D_k=D_{1,k}\cup D_{2,k},
\]
with 
\[
|D_{1,k}| = \begin{cases}
 x_{\rho}, & \text{if } k \in E, \\
 y_{\rho}, & \text{if } k\in O,
\end{cases}
\qquad\text{and}\qquad
|D_{2,k}| = \begin{cases}
 y_{\rho}, & \text{if } k \in E, \\
 x_{\rho}, & \text{if } k\in O,
\end{cases}
\]
and define
\[
  B_i=\bigcup_{k\ge k_0}D_{i,k}
  \quad(i=1,2),
  \qquad
  C=C_E\cup C_O.
\]
Then $A=B_1\cup B_2\cup C$ is a partition of $A$. To verify the required conditions in Theorem \ref{thm:strong-3set}, we need to prove
\begin{equation}\label{eq:double-Delta}
  \Delta(B_1),\ \Delta(B_2)<\infty
\end{equation}
and 
\begin{equation}\label{eq:C-L2-divergence}
H_1(C)=\{0\},\quad H_2(C)\text{ is countable}.
\end{equation}

For \eqref{eq:double-Delta}, note that 
if $m_{i,k}:=|B_i\cap (\rho^k,\rho^{k+1}]|$ for every $k\ge k_0$, then
\[
m_{i,k}+m_{i,k+1}\rho\ge y_{\rho}+x_{\rho}\,\rho.
\]
If $M_{\rho}=2u_{\rho}+1$, then $x_{\rho}=y_{\rho}=u_{\rho}\ge\rho(\rho-1)$, so that
\[m_{i,k}+m_{i,k+1}\ge \rho(\rho^2-1);\]
if $M_{\rho}=2v_{\rho}$, then $x_{\rho}=v_{\rho}-1$ and $y_{\rho}=v_{\rho}$, which implies
\[m_{i,k}+m_{i,k+1}\ge (1+\rho)v_{\rho}-\rho\ge\rho(\rho^2-1).\]
In either case, Lemma \ref{lem:Delta(S)} applied to each $B_i$ confirms \eqref{eq:double-Delta}.

To verify \eqref{eq:C-L2-divergence}, observe that 
\[H_1(C)=H_1(C_E)\cap H_1(C_O),\]
which, in conjunction with \eqref{eq:C_O-divergence}, implies that $H_1(C)=\{0\}$. Moreover, since $C\cap[\rho^k,\rho^{k+1}]\ne\emptyset$ for every $k\ge k_0$, so that when enumerated as a strictly increasing sequence, $C$ has bounded ratios less than $\rho^2$. By Lemma \ref{lem:bounded-ratio}, $H_2(C)$ is countable.

Invoking Theorem \ref{thm:strong-3set} and \eqref{eq:q_A} completes the proof of the base case $M=M_{\rho}$ of Theorem \ref{thm:main}.

Next, consider the general case $M\ge M_{\rho}$. For every $k\ge k_0$, choose an arbitrary $c_k\in A_k$, and put $C_0=\{c_k\}_{k\ge k_0}$. The same argument shows that $H_1(C_0)$ is countable, and an application of Lemma \ref{lem:countable-deletion} to $X_k=A_k':=A_k\setminus\{c_k\}$ for $k\ge k_0$, with $r_k=M-M_{\rho}<|A_k'|$ and $f_{\theta}(a)=\|a\theta\|$ for $\theta\in H_1(C_0)\setminus\{0\}$, yields sets $U_k\subseteq A_k'$ with $|U_k|=M-M_{\rho}$ such that
\begin{equation}\label{eq:T-divergence}
\sum_{k\ge k_0}\sum_{a\in A_k'\setminus U_k}\|a\theta\|=\infty,
  \quad\forall \theta\in H_1(C_0)\setminus\{0\}.        
\end{equation}
Let
\[
  U:=\bigcup_{k\ge k_0}U_k,
  \qquad
  V:=\bigcup_{k\ge k_0}(A_k\setminus U_k).
\]
Then $A=U\cup V$ is a partition of $A$. Note that 
\[
\big|U\cap (\rho^k,\rho^{k+1}]\big|=M-M_{\rho},
  \qquad
\big|V\cap (\rho^k,\rho^{k+1}]\big|\ge M_{\rho},
\]
for $k\ge k_0$. Since 
\[
V=C_0\cup\bigcup_{k\ge k_0}(A_k'\setminus U_k),
\]
\eqref{eq:T-divergence} implies
\[
H_1(V)=H_1(C_0)\cap H_1(A_k'\setminus U_k)=\{0\}.
\]
Thus, the base case $M=M_{\rho}$ of Theorem \ref{thm:main} that we just proved shows that $V$ is strongly complete. Applying Lemma \ref{lem:q_A} with $B=U$, $C=V$, and $h=M-M_{\rho}$ proves the general case $M\ge M_{\rho}$ of the theorem.
\end{proof}

\begin{rmk}\label{rmk:M_rho*}
Recall the definition of $M_{\rho}^{\ast}$ given by \eqref{eq:M_rho*}. It would be interesting to pin down its exact value. Theorem \ref{thm:main} shows that $M_{\rho}^{\ast}\le M_{\rho}$. On the other hand, it is possible to prove $M_{\rho}^{\ast}\ge u_{\rho}$ when $\rho\ge2$. To see this, suppose first that $\rho=2$ and consider the set $A=\{2^k+1:k\in\N\}$. Clearly, $|A\cap[2^{k},2^{k+1}]|=u_{\rho}=1$ for all $k\in\N$. In addition, if $\theta\in H_1(A)$, then $\|(2^k+1)\theta\|\to0$ and $\|(2^{k+1}+1)\theta\|\to0$ as $k\to\infty$. The triangle inequality gives
\[\|\theta\|=\|2(2^k+1)\theta-(2^{k+1}+1)\theta\|\le 2\|(2^k+1)\theta\|+\|(2^{k+1}+1)\theta\|\to0\]
as $k\to\infty$. So $\theta=0$. Hence, $H_1(A)=\{0\}$. However, $A$ is incomplete as $|A\cap[1,2^{k+1}]|=k$ implies $|\FS(A)\cap[1,2^{k+1}]|\le 2^{k}-1$, resulting in at least $2^k+1$ unrepresented numbers in $[1,2^{k+1}]$. 

In the case $\rho>2$, let $r=u_{\rho}-1\ge2$ and $I_k=(\rho^k,\rho^{k+1}]$ for $k\in\N$. Since $r<\rho(\rho-1)$, we may choose $\epsilon>0$ so small that
\begin{equation}\label{eq:rho-epsilon}
	\rho-3\epsilon>\frac{r(1+\epsilon)}{\rho-1}.    
\end{equation}
Suppose that $\epsilon\rho^k\ge r+1$ for every $k>k_0$ and fix $m\in\N$. For $1\le k\le k_0$, let $A_k=I_k\cap\N$; for $k>k_0$, let $A_k$ consist of $r$ consecutive elements in $(\rho^k,(1+\epsilon)\rho^k]$ when $k\not\equiv k_0\pmod*{m}$ and of $r$ consecutive elements in $((\rho-\epsilon)\rho^k,\rho^{k+1}]$ when $k\equiv k_0\pmod*{m}$. Put
\[
A=\bigcup_{k\ge 1}A_k.
 \]
 For every $k>k_0$ with $k\equiv k_0\pmod*{m}$, \eqref{eq:rho-epsilon} implies that the sum of all elements of $A$ not exceeding $(\rho-\epsilon)\rho^k$ is at most
  \[
   r\sum_{\ell=1}^{k-m}\rho^{\ell+1}+r(1+\epsilon)\sum_{\ell=k-m+1}^{k-1}\rho^{\ell}<\left(\frac{r\rho^{2-m}}{\rho-1}+\frac{r(1+\epsilon)}{\rho-1}\right)\rho^k<\left(\rho-2\epsilon\right)\rho^k,
   \]
 provided that $m$ is sufficiently large in terms of $\rho$ and $\epsilon$. Consequently, $\FS(A)$ misses every integer in $[(\rho-2\epsilon)\rho^k,(\rho-\epsilon)\rho^k]$. Hence, $A$ is incomplete. Furthermore, $A$ contains infinitely many pairs of consecutive integers, which leads to $H_1(A)=\{0\}$ as before.
 
 By a more complicated argument, we are able to show that a random set $A$, formed by drawing $M$ elements uniformly from $(\rho^k,\rho^{k+1}]$ and independently for each $k$, is strongly complete almost surely when $M\ge u_{\rho}$ and incomplete almost surely when $M<u_{\rho}$. This suggests that $u_{\rho}$ may be very close to the true value of $M_{\rho}^{\ast}$.
\end{rmk}

\begin{rmk}\label{rmk:M_2*}
The optimal case $M_2^{\ast}=2$ would imply that the set $A_{\alpha,\beta}$ defined by \eqref{eq:A_{alpha,beta}} is strongly complete under Hegyv\'{a}ri's condition that $\alpha\not\sim\beta$ and at least one of them is not a dyadic rational. Indeed, without loss of generality, we may assume that $\alpha$ is not a dyadic rational. For any $x>0$ and $K\ge0$, let
\[U_{K}(x):=\{\lfloor 2^kx\rfloor:k\ge K\}.\]
The condition that $\alpha\not\sim\beta$ ensures that $U_0(\alpha)\cap U_0(\beta)$ is finite. After rescaling by powers of 2 and isolating finitely many elements, we may assume
\[A_{\alpha,\beta}=U_{k_0}(\alpha)\cup U_{k_0}(\beta)\cup B\]
is a partition of $A_{\alpha,\beta}$, where $k_0\in\N$ can be arbitrarily large, $B\subseteq \N$ is finite depending on $k_0$, and $\alpha,\beta\in(1/2,1]$ are distinct with $\alpha$ not a dyadic rational. If $k_0$ is large enough, then $\lfloor 2^{k+1}\alpha\rfloor,\lfloor 2^{k+1}\beta\rfloor\in(2^{k},2^{k+1}]$ are distinct for $k\ge k_0$, so that
\[\big|A_{\alpha,\beta}\cap(2^{k},2^{k+1}]\big|\ge2,\quad\forall k\ge k_0.\] 
Moreover, the condition that $\alpha'$ is not a dyadic rational implies that $\lfloor 2^{k+1}\alpha'\rfloor=2\lfloor 2^{k}\alpha'\rfloor+1$ for infinitely many $k$, from which $H_1(A)=\{0\}$ follows for $A_{\alpha,\beta}$ by a routine application of the triangle inequality. Thus our assumption that $M_2^{\ast}=2$ implies that $A_{\alpha,\beta}$ is strongly complete.
\end{rmk}

The construction of the partition in the proof of Theorem \ref{thm:main} also yields the following strong-precompleteness result.

\begin{prop}\label{prop:strong-precomplete}
	Fix $\rho>1$ and suppose that $H_1(A)=\{0\}$ and
	\[
	|A\cap(\rho^k,\rho^{k+1}]|\ge u_\rho+1
	\]
	for every sufficiently large $k$. Then, for every finite $F\subseteq A$, the set $\FS(A\setminus F)$ is both thick and syndetic.
\end{prop}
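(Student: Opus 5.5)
The plan is to reuse the partition construction from the proof of Theorem \ref{thm:main}, but with only two components: a set $B$ with $\Delta(B)<\infty$, and a set $C$ containing one element per $\rho$-adic block with $H_1(C)=\{0\}$. Write $A_k:=A\cap(\rho^k,\rho^{k+1}]$ for $k\ge k_0$, where $|A_k|\ge u_\rho+1$. After discarding $A\cap[1,\rho^{k_0}]$ (harmless, since $F$ is arbitrary), the aim is to choose one element $c_k\in A_k$ for each $k$. We then set $C:=\{c_k\}_{k\ge k_0}$ and $B:=A\setminus C$.

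Since every block of $B$ has at least $u_\rho\ge\rho(\rho-1)$ elements, Lemma \ref{lem:Delta(S)} with $r=1$ gives $\Delta(B)<\infty$. Since $C$ meets every block, it has bounded ratios (less than $\rho^2$), so Lemma \ref{lem:bounded-ratio} makes $H_2(C)$ countable.

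The main obstacle is choosing the $c_k$ so that $H_1(C)=\{0\}$, and here I would run the even/odd trick from the proof of Theorem \ref{thm:main} with $r_k=|A_k|-1$. The relevant weight factor is
\[
\frac{|A_k|-r_k}{|A_k|}=\frac{1}{|A_k|},
\]
so Lemma \ref{lem:countable-deletion} needs $\sum_k |A_k|^{-1}\sum_{a\in A_k}\|a\theta\|=\infty$. This is automatic when $|A_k|$ is bounded. If $|A_k|$ is unbounded, I would first trim each $A_k$ to exactly $u_\rho+1$ elements; I expect this to be the delicate point.

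To trim safely, apply Lemma \ref{lem:countable-deletion} once with $r_k=|A_k|-(u_\rho+1)$. The factor is then $\ge (u_\rho+1)/|A_k|$, which may still be small, so the direct route fails. Instead I would simply keep the full blocks $A_k$ in the first step and choose $c_k$ to maximize $\|c_k\theta\|$ greedily along the schedule in that lemma's proof. Concretely, at each stage, on the chosen range of $k$, pick the $c_k$ that maximizes $f_{i_s}$. This requires $\sum_k\max_{a\in A_k}\|a\theta\|=\infty$, which follows from $\sum_k\sum_{a\in A_k}\|a\theta\|=\infty$ only if $|A_k|$ is bounded. To avoid this issue, I would instead let the odd and even blocks play asymmetric roles, as follows.

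First pick arbitrary $s_k\in A_k$ for odd $k$. These form a bounded-ratio sequence (ratios less than $\rho^3$), so $H_1(A_O)\subseteq H_1(\{s_k\})$ is countable, where $A_O:=\bigcup_{k\text{ odd}}A_k$. Hypothesis $H_1(A)=\{0\}$ then forces $\sum_{k\text{ even}}\sum_{a\in A_k}\|a\theta\|=\infty$ for each of the countably many $\theta\in H_1(A_O)\setminus\{0\}$.

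The even-block elements $c_k$ then have to be chosen so that their series still diverges at all these $\theta$. I would handle this by allowing $C$ to contain more than one element per block when needed: take $C_E$ to be the union over even $k$ of $A_k$ minus exactly $u_\rho$ elements, and place those $u_\rho$ elements in $B$. With $r_k=u_\rho$ the factor in Lemma \ref{lem:countable-deletion} is at least $1/(u_\rho+1)$, so the lemma applies. The resulting $C_E$ contains a bounded-ratio subsequence, so $H_1(C_E)$ is countable, and by construction $H_1(C_E)\cap H_1(A_O)=\{0\}$. Repeating the argument on the odd blocks against $H_1(C_E)\setminus\{0\}$ produces $C_O$, and $C:=C_E\cup C_O$ satisfies
\[
H_1(C)=H_1(C_E)\cap H_1(C_O)=\{0\}.
\]
Still $|B\cap A_k|=u_\rho$ for every $k$, and $C$ meets every block, so $\Delta(B)<\infty$ and the countability of $H_2(C)$ survive.

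With the partition in hand, fix a finite $F$ and put $B'=B\setminus F$ and $C'=C\setminus F$. By \eqref{eq:Delta(S-T)} and Lemma \ref{lem:burr-erdos3.2}, $\FS(B')$ is syndetic. Since $\FS(A\setminus F)\supseteq\FS(B')$, it is syndetic as well. For thickness, Lemma \ref{lem:Hartman-H_1-H_2} gives Hartman uniformly distributed measures supported on $\FS(C')$, using that $H_2(C')$ is countable and $H_1(C')=\{0\}$. Lemma \ref{Lem:thick-sumset}, applied with $X=\FS(B')$ (so $d^*(X)>0$), then shows that $\FS(B')+\FS(C')$ is thick. Because $B'$ and $C'$ are disjoint, this sumset lies in $\FS(A\setminus F)$, which is therefore thick.
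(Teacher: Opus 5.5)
Your final construction is exactly the paper's proof, and it is correct: you apply Lemma \ref{lem:countable-deletion} first to the even blocks and then to the odd blocks with $r_k=u_\rho$, put all deleted elements into a single set $B$ with $|B\cap A_k|=u_\rho$, and then combine Lemmas \ref{lem:Hartman-H_1-H_2} and \ref{Lem:thick-sumset}. The differences are cosmetic: you build one partition of $A$ and remove $F$ from each piece, taking syndeticity from $\FS(B\setminus F)$, whereas the paper rebuilds the partition for $A\setminus F$ and takes syndeticity directly from $\Delta(A\setminus F)<\infty$; the abandoned detours (one element per block, trimming, greedy choice) should simply be deleted.
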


\begin{proof}
	Fix a finite $F\subseteq A$ and put $A'=A\setminus F$. Let $k_0\in\N$ be an integer such that $k_0>\max F$ and $|A\cap(\rho^k,\rho^{k+1}]|\ge u_\rho+1$ for every $k\ge k_0$. Then $A'\cap(\rho^k,\rho^{k+1}]$ also contains at least $u_\rho+1$ elements for each $k\ge k_0$, which implies $\Delta(A')<\infty$ by Lemma \ref{lem:Delta(S)}. Hence, $\FS(A')$ is syndetic by Lemma \ref{lem:burr-erdos3.2}.
	
	Furthermore, repeating the proof of Theorem \ref{thm:main} above, but with $A'$ in place of $A$, with $r_k=u_\rho$ in the two applications of Lemma \ref{lem:countable-deletion}, and without splitting $D_k$ into two subsets, we obtain a partition $A'=B\cup C$ with $B,C$ infinite, such that $\Delta(B)<\infty$, $H_1(C)=\{0\}$, and $H_2(C)$ is countable. By Lemma \ref{lem:burr-erdos3.2}, $\FS(B)$ is syndetic, so $d^*(\FS(B))>0$. Lemmas \ref{Lem:thick-sumset} and \ref{lem:Hartman-H_1-H_2} together show that $\FS(B)+\FS(C)$ is thick. Therefore, $\FS(A')\supseteq\FS(B)+\FS(C)$ is also thick.
\end{proof}

The proof of Proposition \ref{prop:strong-precomplete} shows that a direct upgrade of Theorem \ref{thm:strong-3set} to a two-component strong-completeness criterion would immediately improve Theorem \ref{thm:main} with $u_\rho+1$ in place of $M_{\rho}$, and in particular, giving $M_2^*\le 3$, but such a direct upgrade seems difficult to attain. On the other hand, the next proposition affirms that there is no hope of deducing $M_2^*\le 4$ from Theorem \ref{thm:strong-3set}, suggesting that a genuinely different mechanism for strong completeness may be required.

\begin{prop}\label{prop:four-point-architecture}
	There is a strongly complete set $A\subseteq\N$ such that
	\[
	\big|A\cap(2^k,2^{k+1}]\big|=4
	\]
	for every $k\ge 3$, but there are no pairwise disjoint infinite subsets $B_1,B_2,C\subseteq A$ satisfying
	\[
	\Delta(B_1),\Delta(B_2)<\infty
	\qquad\text{and}\qquad
	H_1(C)=\{0\}.
	\]
\end{prop}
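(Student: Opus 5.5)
The plan is to build $A$ explicitly, block by block, so that the constraint $\Delta(B_i)<\infty$ is very expensive in every dyadic block, while $A$ still carries a little arithmetic "mixing" that makes it strongly complete. I would put all four elements of $A\cap(2^k,2^{k+1}]$ close to the right endpoint. The concrete first choice is
\[
A_k=\{2^{k+1},\ 2^{k+1}-1,\ 3\cdot 2^{k-1},\ 3\cdot2^{k-1}-1\}\qquad(k\ge3).
\]
This has a \emph{rigid} part $R=\{2^{j},3\cdot 2^{j}\}$ and a \emph{perturbed} part $P=\{2^{j}-1,3\cdot2^{j}-1\}$. The point of the rigid part is that every dyadic rational $\theta$, in particular $\theta=\tfrac12$, lies in $H_1(R)$, since all but finitely many terms vanish. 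So if $C\subseteq R\cup(\text{finite set})$, then $H_1(C)\ne\{0\}$. I am prepared to tune the offsets if the counting below needs more slack.

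\emph{Step 1: the budget computation.} For $S\subseteq A$ write $m_\ell(S)=|S\cap(2^\ell,2^{\ell+1}]|$. Since every element of $A_\ell$ is at least $3\cdot2^{\ell-1}-1$ and at most $2^{\ell+1}$, the condition $\Delta(S)<\infty$ tested at the largest element of $S\cap A_k$ forces, roughly,
\[
\sum_{\ell<k}\big(\text{sum of }S\cap A_\ell\big)\ \ge\ 2^{k+1}-O(1).
\]
I would normalize this as in the recursion $x_{k+1}=\tfrac12 x_k+\tfrac12(\text{normalized mass of }S\cap A_k)$ and need $x_k\ge1-O(2^{-k})$. The $-1$ offsets and the factor $\tfrac34$ on the $3\cdot2^{\ell-1}$ elements create an unbounded accumulated deficit unless $S$ takes normalized mass at least $1$ in essentially every block, with strictly more infinitely often. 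Summing over $B_1$ and $B_2$ shows that $B_1\cup B_2$ must use "almost all" of the mass of $A$.

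The step I expect to be the \emph{main obstacle} is Step 2: upgrading Step 1 to the structural statement that $C\cap P$ is finite. The goal is to show that whenever $C$ takes a perturbed element $2^{j}-1$ or $3\cdot2^j-1$, one of the $B_i$ incurs a definite deficit that can never be repaid. If this fails for my first choice of $A_k$, I would first try sharpening the structure, for instance replacing $3\cdot 2^{k-1}$ by elements even closer to $2^{k+1}$, so that the mass constraint is tight and each $B_i$ needs exactly two elements per block. Then $B_1\cup B_2$ exhausts all four elements of almost every block and $C$ is essentially finite, contradicting that $C$ is infinite, or giving $H_1(C)\ni\tfrac12$ trivially.

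\emph{Step 3: strong completeness of $A$.} This cannot go through Theorem \ref{thm:strong-3set}, by design. I would instead argue directly, in the Burr--Erd\H{o}s style. For finite $F$, the set $A\setminus F$ still has $\Delta<\infty$ by \eqref{eq:Delta(S-T)} and Lemma \ref{lem:Delta(S)}, so $\FS(A\setminus F)$ is syndetic by Lemma \ref{lem:burr-erdos3.2}. Next I would show by induction on $k$ that $\FS\big((A\setminus F)\cap[1,2^{k+1}]\big)$ contains every integer in an interval $[c,\ \Sigma_k-c]$, where $\Sigma_k$ is the total sum and $c$ is a constant depending only on $F$. The inductive step adds $A_{k+1}$ and uses that the new elements include two pairs differing by $1$. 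These pairs fill the gaps left by the syndetic structure and shift the interval by amounts up to $2^{k+2}\le\Sigma_k-2c$. That inequality follows from the same near-tight mass computation as in Step 1. The induction then gives cofiniteness of $\FS(A\setminus F)$.
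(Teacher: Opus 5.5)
There is a genuine gap: your set does admit the forbidden decomposition, so Step~2 cannot succeed, and the fallback you mention moves in the wrong direction. With your blocks $A_k=\{2^{k+1},2^{k+1}-1,3\cdot2^{k-1},3\cdot2^{k-1}-1\}$, take
\[
B_1=\{2^{k+1}:k\ge3\},\qquad B_2=\{2^{k+1}-1:k\ge3\}\cup\{3\cdot2^{k-1}:k\ge4\text{ even}\},
\]
and $C=A\setminus(B_1\cup B_2)$. These sets are pairwise disjoint and infinite. $B_1$ is a tail of the binary basis, so $\Delta(B_1)=16$. In $B_2$ the offsets $-1$ only create a deficit linear in $k$, and the inserted elements repay it. For $b=2^{k+1}-1$ with $k\ge4$ one finds $b-\sum_{b'\in B_2,\,b'<b}b'\le k+12-3\cdot2^{k-2}$. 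For $b=3\cdot2^{k-1}$ the same quantity is at most $k+13-2^{k-1}$. So $\Delta(B_2)<\infty$. Finally, $C$ contains the consecutive integers $3\cdot2^{k-1}-1$ and $3\cdot2^{k-1}$ for every odd $k$, so $\|\theta\|\le\|(3\cdot2^{k-1}-1)\theta\|+\|3\cdot2^{k-1}\theta\|$ forces $H_1(C)=\{0\}$. Since $C$ also has bounded ratios, Theorem~\ref{thm:strong-3set} applies to your $A$; it lies exactly in the regime the proposition is meant to exclude. The flaw is the conclusion of Step~1. $\Delta(B_i)<\infty$ only asks that $B_i$ carry about $2^{\ell+1}$ in each block $\ell$ (plus occasional extra to absorb $O(1)$ offsets), i.e.\ normalized mass about $1$. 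So $B_1\cup B_2$ needs about $2$, while your blocks carry $7/2$. Pushing the elements toward $2^{k+1}$ raises the total to $4$ and only enlarges the slack left for $C$.

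What is needed is the opposite arrangement, and this is what the paper does. In most blocks the four elements sit near the \emph{left} endpoint, $\{2^k+2,2^k+4,2^k+6,2^k+8\}$, so that $\sum_{a\in A,\,a\le2^K}a=(4+o(1))2^K$. Only along a sparse set of test blocks $K$ are all four elements placed at $\{2^{K+1}-6,2^{K+1}-4,2^{K+1}-2,2^{K+1}\}$. The paper takes $K\in\{j^2:j\ge3\}$, so earlier test blocks contribute only $o(2^K)$. At a test block, the least element of each $B_i$ above $2^K$ is at least $2^{K+1}-6$. Hence $\Delta(B_i)<\infty$ forces $\sum_{b\in B_i,\,b\le2^K}b\ge2^{K+1}-O(1)$, and $B_1\cup B_2$ must absorb all but $o(2^K)$ of the mass below $2^K$. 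Every element is even except one odd element $2^{K-1}+1$ in the block immediately preceding each test block. Then $H_1(C)=\{0\}$ forces $C$ to contain infinitely many of these (otherwise $\tfrac12\in H_1(C)$), yet each one costs $2^{K-1}$, which exceeds the slack. So your ``rigid part'' idea survives as a parity obstruction. The difference is that the mass constraint must be tight only at sparse test blocks, with the elements $C$ cannot do without placed right before them. Strong completeness of that set then follows rigorously from Corollary~\ref{cor:3-bounded-gaps}, i.e.\ from the paired-switching Lemma~\ref{lem:paired-complete}. Its hypotheses hold: four elements per block is at least $u_2+1=3$, tails have gcd $1$ because of $2^{K-1}+1$ and $2^{K-1}+2$, and gaps equal to $2$ occur infinitely often. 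Your Step~3 induction gestures at this mechanism but never produces the initial long interval $[c,\Sigma_k-c]$, which is where the arithmetic difficulty lies.
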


\begin{proof}
	Let $\Qq:=\{j^2:j\ge3\}$.  For $k\ge3$, define
	\[
	A_k:=
	\begin{cases}
		\{2^{k+1}-6,2^{k+1}-4,2^{k+1}-2,2^{k+1}\},
		&\text{if }k\in\Qq,\\
		\{2^k+1,2^k+2,2^k+4,2^k+6\},
		&\text{if }k+1\in\Qq,\\
		\{2^k+2,2^k+4,2^k+6,2^k+8\},
		&\text{otherwise},
	\end{cases}
	\qquad
	\]
	and put
	\[A:=\bigcup_{k\ge3}A_k.\]
	The three cases of $A_k$ are disjoint and all contained in $(2^k,2^{k+1}]$. In particular, 
	\[
	\big|A\cap(2^k,2^{k+1}]\big|=|A_k|=4,\qquad\forall k\ge3.
	\]
	Note also that the odd numbers in $A$ are exactly of the form $2^k+1$ with $k+1\in\Qq$ and that every $A_k$ contains at least three consecutive even numbers. For each integer $K\ge3$, let
	\[
	S_K:=\sum_{a\in A\cap[1,2^K]}a.
	\]
	It is easy to see that if $K\in\Qq$ is sufficiently large, then
	\begin{equation}\label{eq:S_K}
		S_K=4\sum_{\substack{k=3\\k,k+1\notin\Qq}}^{K-1}\left(2^k+O(1)\right)+4\sum_{\substack{k\in\Qq\\k<K}}\left(2^k+O(1)\right)=(4+o(1))2^K.    
	\end{equation}
	
	% Let $\theta\in H_1(A)\setminus\{0\}$. For each $k\ge3$, choose even numbers $x_k,y_k\in A_k$ with $y_k-x_k=2$. Then
	% \[
	%  \|2\theta\|
	%  \le\|x_k\theta\|+\|y_k\theta\|
	%  \to0
	% \]
	% as $k\to\infty$. Hence $2\theta=0$ in $\T$, or equivalently, $\theta=1/2$ in $\T$. The odd numbers in $A$ are exactly of the form $2^k+1$ with $k+1\in\Qq$. So
	% \[
	% \sum_{a\in A}\|a\theta\|\ge\sum_{\substack{k\ge3\\k+1\in\Qq}}\|(2^k+1)\theta\|=\infty,
	% \]
	% contradicting $\theta\in H_1(A)\setminus\{0\}$. It follows that $H_1(A)=\{0\}$.
	
	Suppose, to the contrary, that there were pairwise disjoint infinite $B_1,B_2,C\subseteq A$ with the asserted properties. In particular, the condition $H_1(C)=\{0\}$ forces $C$ to contain infinitely many of the odd numbers $2^k+1$ with $k+1\in\Qq$, since otherwise $1/2\in H_1(C)$.
	
	Let $B:=B_1\cup B_2$. Fix an arbitrarily large $K\in\Qq$ for which $2^{K-1}+1\in C$, and put, for each $i=1,2$,
	\[
	b_i:=\min\big(B_i\cap(2^K,\infty)\big),
	\]
	which exists since $B_i$ is infinite. Then $b_i\ge 2^{K+1}-6$. By the definition of $\Delta(B_i)$,
	\[
	\sum_{\substack{b\in B_i\\b\le2^K}}b
	=\sum_{\substack{b\in B_i\\b<b_i}}b
	\ge b_i-\Delta(B_i)
	\ge 2^{K+1}-6-\Delta(B_i).
	\]
	Therefore
	\begin{equation}\label{eq:sum-B-2^K}
		\sum_{b\in B\cap[1,2^K]}b
		\ge \left(2^{K+1}-6-\Delta(B_1)\right)+\left(2^{K+1}-6-\Delta(B_2)\right)=4\cdot2^K-O(1).
	\end{equation}
	On the other hand, since $B_1,B_2,C$ are pairwise disjoint and $2^{K-1}+1\in C$, \eqref{eq:S_K} gives
	\[
	\sum_{b\in B\cap[1,2^K]}b
	\le S_K-\left(2^{K-1}+1\right)
	=\left(\frac{7}{2}+o(1)\right)2^K,
	\]
	contradicting \eqref{eq:sum-B-2^K} for sufficiently
	large $K$. Hence, no such $B_1,B_2,C$ exist.
	
	Finally, strong completeness of $A$ is an immediately consequence of Corollary \ref{cor:3-bounded-gaps} below.
\end{proof}

We end this section with a proof of Theorem \ref{thm:complete-decomposition} based on Theorem \ref{thm:main}.
\begin{proof}[Proof of Theorem \ref{thm:complete-decomposition}]
Since (2) trivially implies (3), the equivalence between (1)--(3) will follow from (1)$\implies$(2) and (3)$\implies$(1).

We first prove the easier direction (3)$\implies$(1). Assume (3). Since $A_1$ is strongly complete and $A_1\subseteq A$, we know that $A$ is also strongly complete. In particular, $H_1(A)=\{0\}$, or equivalently, \eqref{eq:L1-divergence} holds. Moreover, \eqref{eq:A_j-growth} implies that 
\begin{equation}\label{eq:A_j-m_k}
\big|A_j\cap(\rho^k,\rho^{k+1}]\big|=\left(\pi_j+o_j(1)\right)m_k,\qquad\forall j\ge1.    
\end{equation}
Since $m_k\ge1$ for sufficiently large $k$, we have $A_j\cap(\rho^k,\rho^{k+1}]\ne\emptyset$ for sufficiently large $k$. Fixing $J\in\N$ and summing \eqref{eq:A_j-m_k} over $1\le j\le J$ yield
\[
\left(1+o_J(1)\right)m_k\ge\left(\sum_{j=1}^{J}\pi_j+o_J(1)\right)m_k=\sum_{j=1}^{J}\big|A_j\cap(\rho^k,\rho^{k+1}]\big|\ge J,
\]
which implies $m_k\ge(1+o_J(1))J$ for sufficiently large $k$ in terms of $J$. Since $J$ is arbitrary, $m_k\to\infty$ as $k\to\infty$, proving (1).

Now we prove (1)$\implies$(2) by arguing probabilistically. Fix $\{\pi_j\}_{\ge1}$ and choose $k_0\in\N$ so that $m_k\ge2$ for every $k\ge k_0$. Put
\[
 A^*:=\bigcup_{k\ge k_0}X_k.
\]
Since $H_1(A)=\{0\}$ and $A\setminus A^*$ is finite, we have $H_1(A^*)=\{0\}$. 

Suppose that $\{n_{j,k}\}_{j\ge1}$ is a sequence of nonnegative integers summing up to $m_k$ such that
\begin{equation}\label{eq:n_{j,k}/m_k}
 \lim_{k\to\infty}\frac{n_{j,k}}{m_k}=\pi_j,\qquad\forall j\ge1.
\end{equation}
Of course, only finitely many $n_{j,k}$ are nonzero for any fixed $k$. For each $k\ge k_0$, independently choose a partition 
\[
 X_k=\bigcup_{j\ge1}A_{j,k},\quad\text{where}\quad|A_{j,k}|=n_{j,k},
\]
uniformly at random among all the 
\[
\binom{m_k}{n_{1,k},n_{2,k},\ldots}:=\frac{m_k!}{\prod_{j\ge1}n_{j,k}!} 
\]
possible partitions of labeled elements of $X_k$ into sets of sizes $n_{1,k},n_{2,k},\ldots$. Conditionally on the partition, independently choose a singleton $C_{j,k}$
of each nonempty $A_{j,k}$ and put $B_{j,k}:=A_{j,k}\setminus C_{j,k}$. If $A_{j,k}=\emptyset$, set $B_{j,k}=C_{j,k}=\emptyset$. Let
\begin{equation}\label{eq:B_j-C_j}
B_j:=\bigcup_{k\ge k_0}B_{j,k} \qquad\text{and}\qquad C_j:=\bigcup_{k\ge k_0}C_{j,k}.
\end{equation}
Fix $j\ge1$. Since $\pi_j>0$ and $m_k\to\infty$ as $k\to\infty$, \eqref{eq:n_{j,k}/m_k} implies that there is an integer $K_j\ge k_0$ such that 
\begin{equation}\label{eq:n_{j,k}-m_k}
n_{j,k}-1\ge\frac{\pi_j}{2}(m_{k}-1)\ge 1    
\end{equation}
whenever $k\ge K_j$. In particular, $C_j$, when enumerated as a strictly increasing sequence, has bounded ratios, so Lemma \ref{lem:bounded-ratio} makes $H_1(C_j)$ countable. 

Put $c_0:=2(1-\e^{-1/2})$, so that $1-\e^{-t}\ge c_0t$ for $t\in[0,1/2]$. If $S$ is an $r$-element subset of a finite set $Y$, where $|Y|=R$, chosen uniformly at random and $f:Y\to[0,1/2]$, then Maclaurin's inequality on mean values of elementary symmetric polynomials gives
\begin{align}\label{eq:app-Maclaurin}
 \E\left[\exp\left(-\sum_{y\in S}f(y)\right)\right]
 =\frac{1}{\binom{R}{r}}\sum_{\substack{S\subseteq Y\\|S|=r}}
       \prod_{y\in S}\e^{-f(y)}\nonumber
 &\le\left(\frac{1}{R}\sum_{y\in Y}\e^{-f(y)}\right)^r\\
 &\le\exp\left(-c_0\frac{r}{R}\sum_{y\in Y}f(y)\right).
\end{align}
Let $\theta\in H_1(C_j)\setminus\{0\}$. Since
$H_1(A^*)=\{0\}$, we have
\begin{equation}\label{eq:X_k-C_{j,k}-L1-divergence}
\sum_{k\ge k_0}\,\sum_{a\in X_k\setminus C_{j,k}}\|a\theta\|=\infty.    
\end{equation}
Conditional on $C_{j,k}$, the set $B_{j,k}$ can be thought of as an $(n_{j,k}-1)$-element subset of $X_k\setminus C_{j,k}$ chosen uniformly at random. By \eqref{eq:n_{j,k}-m_k},
\[
\frac{|B_{j,k}|}{|X_k\setminus C_{j,k}|}
 =\frac{n_{j,k}-1}{m_k-1}\ge\frac{\pi_j}{2}
\]
for $k\ge K_j$. Thus for any $K\ge K_j$, independence and \eqref{eq:app-Maclaurin} applied with $Y=X_k\setminus C_{j,k}$, $S=B_{j,k}$, and $f(y)=\|y\theta\|$, imply 
\begin{align*}
\E\left[
 \left.\exp\left(-\sum_{k=K_j}^K\sum_{a\in B_{j,k}}\|a\theta\|\right)\right|C_j\right]
&=\prod_{k=K_j}^{K}\E\left[
 \left.\exp\left(-\sum_{a\in B_{j,k}}\|a\theta\|\right)\right|C_{j,k}\right]\\
 &\le\prod_{k=K_j}^{K}\exp\left(-c_0\frac{n_{j,k}-1}{m_k-1}\sum_{a\in X_k\setminus C_{j,k}}\|a\theta\|\right)\\
 &\le\exp\left(-c_0\frac{\pi_j}{2}\sum_{k=K_j}^{K}\sum_{a\in X_k\setminus C_{j,k}}\|a\theta\|\right), 
\end{align*}
Combining this with Markov's inequality, we have, for every fixed $L>0$,
\begin{align*}
\PP\left(\left.\sum_{k=K_j}^{\infty}\sum_{a\in B_{j,k}}\|a\theta\|<L\,\right|C_j\right)&\le \PP\left(\left.\sum_{k=K_j}^K\sum_{a\in B_j}\|a\theta\|<L\,\right|C_j\right)\\
&\le e^L\,\E\left[
 \left.\exp\left(-\sum_{k=K_j}^L\sum_{a\in B_{j,k}}\|a\theta\|\right)\right|C_j\right]\\
&\le\exp\left(L-c_0\frac{\pi_j}{2}\sum_{k=K_j}^{K}\sum_{a\in X_k\setminus C_{j,k}}\|a\theta\|\right).
\end{align*}
In view of \eqref{eq:X_k-C_{j,k}-L1-divergence}, we obtain, by first letting $K\to\infty$ and then $L\to\infty$,
\[
\PP\left(\left.\sum_{k=K_j}^{\infty}\sum_{a\in B_{j,k}}\|a\theta\|<\infty\,\right|C_j\right)=0.
\]
Since this holds for every $\theta\in H_1(C_j)\setminus\{0\}$, the union bound yields 
\begin{equation}\label{eq:conditional-C_j-divergence}
\PP\left(\left.\sum_{k=K_j}^{\infty}\sum_{a\in B_{j,k}}\|a\theta\|=\infty,~\forall\theta\in H_1(C_j)\setminus\{0\}\right|C_j\right)=1.    
\end{equation}
Let $E_j$ be the event that $H_1(B_j\cup C_j)=\{0\}$. Then
\[
E_j=\bigcap_{q=2}^{\infty}\bigcap_{L=1}^{\infty}\bigcup_{k=k_0}^{\infty}\left\{\min_{\substack{\theta\in\T\\\|\theta\|\ge1/q}}\,\sum_{k=k_0}^{K}\sum_{a\in A_{j,k}}\|a\theta\|>L\right\},
\]
where we observe that $A_{j,k}=(B_j\cup C_j)\cap X_k$. In particular, $E_j$ is measurable. Since $H_1(B_j\cup C_j)=H_1(B_j)\cap H_1(C_j)$, the event $E_j|\,C_j$ contains the event in \eqref{eq:conditional-C_j-divergence}. Hence
$\PP(E_j|\,C_j)=1$. Averaging over all the choices for $C_j$, we get $\PP(E_j)=1$. In other words, 
\[
H_1(B_j\cup C_j)=\{0\}     
\]
occurs almost surely. Also, 
\[
\lim_{k\to\infty}|(B_j\cup C_j)\cap X_k|=\lim_{k\to\infty}|A_{j,k}|=\lim_{k\to\infty}n_{j,k}=\infty
\]
for each $j\ge1$. By Theorem \ref{thm:main},
$B_j\cup C_j$ is strongly complete almost surely for each $j\ge1$.

Now we construct the desired partition of $A$. For $j\ge1$ and $k\ge k_0$, let
\[
 \Pi_0:=0,\qquad \Pi_j:=\sum_{i=1}^{j}\pi_i,
 \qquad
 n_{j,k}:=\lceil m_k\Pi_j\rceil-\lceil m_k\Pi_{j-1}\rceil.
\]
Then
\[
\sum_{j\ge1}n_{j,k}=\lim_{J\to\infty}\sum_{j=1}^{J}n_{j,k}=\lim_{J\to\infty}\lceil m_k\Pi_J\rceil=m_k.    
\]
In addition,
\begin{equation}\label{eq:approx-n_{j,k}/m_k}
 \big|n_{j,k}-\pi_jm_k\big|<1,
\end{equation}
so \eqref{eq:n_{j,k}/m_k} is fulfilled. Thus the discussion above shows that for each $j\ge1$, there is a realization of the sets $B_j,C_j$ in \eqref{eq:B_j-C_j} such that for $j\ge1$ and $k\ge k_0$, $A_j^*:=B_j\cup C_j$ is strongly complete,
\[
|A_j^*\cap X_k|=|(B_j\cup C_j)\cap X_k|=n_{j,k}
\]
satisfies \eqref{eq:approx-n_{j,k}/m_k}, and 
\[
A^*=\bigcup_{j\ge1}A_j^*
\]
is a partition. For each $0\le k<k_0$, choose an arbitrary partition 
\[
X_k=\bigcup_{j\ge1}A_{j,k},\quad\text{where}\quad|A_{j,k}|=n_{j,k},
\]
and put
\begin{align*}
A_1&:=(A\cap\{1\})\cup A_1^*\cup\bigcup_{k=1}^{k_0-1}A_{1,k},\\
A_j&:=A_j^*\cup\bigcup_{k=1}^{k_0-1}A_{j,k},\qquad\forall j\ge2.
\end{align*}
Then 
\[
A=\bigcup_{j\ge1}A_j
\]
is a partition into strongly complete sets $A_j$ satisfying \eqref{eq:A_j-growth}. This verifies (1)$\implies$(2).
\end{proof}

\section{An ordered-block generalization of Theorem \ref{thm:main}}\label{S:block-ext-main}
The $\rho$-adic intervals in the condition \eqref{eq:growth} produce the bounded-ratio sequences $S_O,S_E$, which lead to countable spectra and retained divergence, while ensuring the reservoirs $B_1,B_2$ receive sufficiently many elements so that they have bounded $\Delta$-values. Once having these two features isolated, we can extend Theorem \ref{thm:main}, replacing these intervals with more general windows. 

\begin{thm}\label{thm:ordered-blocks}
Let $A\subseteq\N$ satisfy $H_1(A)=\{0\}$, and let
$\{A_k\}_{k\ge 1}$ be a sequence of finite nonempty subsets of $A$ such that 
\[
  \max A_k<\min A_{k+1},\quad\forall k\ge 1,
\]
and $A\setminus\bigcup_{k\ge 1}A_k$ is finite. For each $k\ge 1$, define
\[
  \alpha_k:=\min A_k,\qquad\beta_k:=\max A_k,
\]
and assume that there exist $k_0,r\in\N$ and sequences of nonnegative integers $\{r_{1,k}\}_{k\ge k_0}$ and $\{r_{2,k}\}_{k\ge k_0}$, each positive for infinitely many $k$, such that 
\[
r_{1,k}+r_{1,k}=r,\quad\forall k\ge k_0,
\]
\[
 |A_k|\ge r+1,\quad\forall k\ge k_0,
\]
\begin{equation}\label{eq:ordered-ratio}
\sup_{k\ge k_0}\frac{\alpha_{k+1}}{\alpha_k}<\infty,
\end{equation}
and
\[
 \sup_{k\ge k_0}
 \left(\beta_k-\sum_{\ell=k_0}^{k-1}r_{i,\ell}\alpha_\ell\right)<\infty,\quad\forall i=1,2.
\]
Then $A$ is strongly complete.
\end{thm}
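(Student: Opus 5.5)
We mimic the proof of Theorem \ref{thm:main}, reading the typo $r_{1,k}+r_{1,k}=r$ as $r_{1,k}+r_{2,k}=r$, and aim for a partition $A=B_1\cup B_2\cup C$ to which Theorem \ref{thm:strong-3set} applies. First discard the finitely many elements of $A$ outside $\bigcup_{k\ge k_0}A_k$. This is harmless because strong completeness is insensitive to finite deletions, and $H_1$ is unchanged. Split the indices $k\ge k_0$ into even and odd classes $E,O$, and put $A_E=\bigcup_{k\in E}A_k$, $A_O=\bigcup_{k\in O}A_k$.

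\textbf{Step 1: countable spectra.} Pick one element from each $A_k$ with $k\in O$. Successive picks lie in $A_k$ and $A_{k+2}$, so their ratio is at most $\beta_{k+2}/\alpha_k\le\alpha_{k+3}/\alpha_k$, which is bounded by \eqref{eq:ordered-ratio}. Lemma \ref{lem:bounded-ratio} then makes $H_1(A_O)$ countable.

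\textbf{Step 2: first deletion.} Since $H_1(A)=\{0\}$, for each $\theta\in H_1(A_O)\setminus\{0\}$ the series $\sum_{k\in E}\sum_{a\in A_k}\|a\theta\|$ diverges. Because $|A_k|\ge r+1$, the weights satisfy $(|A_k|-r)/|A_k|\ge 1/(r+1)$. Lemma \ref{lem:countable-deletion} therefore gives $D_k\subseteq A_k$ for $k\in E$ with $|D_k|=r$ such that $C_E=\bigcup_{k\in E}(A_k\setminus D_k)$ satisfies $H_1(C_E)\cap H_1(A_O)=\{0\}$.

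\textbf{Step 3: second deletion.} $C_E$ again meets every $A_k$ with $k\in E$, so $H_1(C_E)$ is countable by the same ratio argument. Applying Lemma \ref{lem:countable-deletion} on the odd blocks gives $D_k$ for $k\in O$, and we set $C=C_E\cup C_O$. Then $H_1(C)=\{0\}$, and since $C$ meets every $A_k$, bounded ratios give countability of $H_2(C)$ via Lemma \ref{lem:bounded-ratio}.

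\textbf{Step 4: the reservoirs.} Split each $D_k$ as $D_{1,k}\cup D_{2,k}$ with $|D_{i,k}|=r_{i,k}$, which is possible because $r_{1,k}+r_{2,k}=r=|D_k|$, and set $B_i=\bigcup_k D_{i,k}$. Each $B_i$ is infinite because $r_{i,k}>0$ infinitely often.

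\textbf{Step 5: bounding $\Delta(B_i)$.} This replaces Lemma \ref{lem:Delta(S)}. Take $s\in B_i\cap A_k$. Every element of $D_{i,\ell}$ is at least $\alpha_\ell$, so the elements of $B_i$ preceding $s$ include all of $B_i\cap A_\ell$ for $k_0\le\ell<k$, and hence
\[
\sum_{t\in B_i,\,t<s}t\ \ge\ \sum_{\ell=k_0}^{k-1}r_{i,\ell}\alpha_\ell .
\]
Using $s\le\beta_k$, we get $s-\sum_{t<s}t\le \beta_k-\sum_{\ell=k_0}^{k-1}r_{i,\ell}\alpha_\ell$, which is uniformly bounded by hypothesis. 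Thus $\Delta(B_i)<\infty$.

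\textbf{Conclusion and main obstacle.} Theorem \ref{thm:strong-3set} now yields strong completeness of $A$. I expect no serious obstacle. The only delicate points are the bookkeeping in Step 5 (elements of the same block smaller than $s$ only help) and checking that the ratio bound in Steps 1 and 3 really follows from \eqref{eq:ordered-ratio}. That bound uses $\beta_k<\alpha_{k+1}$ to control gaps across skipped blocks.
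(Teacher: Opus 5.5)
Your proposal is correct and follows the paper's proof. You discard the finitely many elements outside $\bigcup_{k\ge k_0}A_k$, rerun the even/odd double application of Lemma \ref{lem:countable-deletion} from the proof of Theorem \ref{thm:main} with \eqref{eq:ordered-ratio} supplying the bounded-ratio sequences, split each $D_k$ into pieces of sizes $r_{1,k},r_{2,k}$, bound $\Delta(B_i)$ directly by $\beta_k-\sum_{\ell=k_0}^{k-1}r_{i,\ell}\alpha_\ell$, and invoke Theorem \ref{thm:strong-3set}, exactly as the paper does (reading the typo as $r_{1,k}+r_{2,k}=r$ is the intended meaning), while your explicit bound $\beta_{k+2}/\alpha_k\le\alpha_{k+3}/\alpha_k$ spells out a ratio estimate the paper leaves implicit.
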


% To see why Theorem \ref{thm:ordered-blocks} subsumes the qualitative part of Theorem \ref{thm:main}, take $A_k=A\cap(\rho^k,\rho^{k+1}]$ for $k\ge 1$, and set $r=u_\rho$. Then \eqref{eq:A_k-growth} is exactly \eqref{eq:growth} with $M=M_{\rho}$, while $\alpha_{k+1}/\alpha_k<\rho^2$. Furthermore, since $u_{\rho}\ge\rho(\rho-1)$, we find that
% \[\beta_k-r\sum_{\ell=k_0}^{k-1}\alpha_\ell\le\rho^{k+1}-\rho(\rho-1)\sum_{\ell=k_0}^{k-1}\rho^\ell=\rho^{k+1}-\rho\big(\rho^{k}-\rho^{k_0}\big)=\rho^{k_0+1}.\]
% Thus, all the conditions in Theorem \ref{thm:ordered-blocks} are fulfilled, so Theorem \ref{thm:main} follows.

\begin{proof}
It is enough to work with
$A^*=\bigcup_{k\ge k_0}A_k$, since $A\setminus A^*$ is finite and $H_1(A^*)=\{0\}$. Arguing as in the proof of Theorem \ref{thm:main} with the observation that \eqref{eq:ordered-ratio} makes bounded-ratio sequences possible everywhere they are needed, we obtain a partition $A^*=B_1\cup B_2\cup C$ such that $H_1(C)=\{0\}$, $H_2(C)$ is countable, and each block $B_i\cap A_k~(k\ge k_0)$ contains $r_{i,k}$ elements. Setting
\[
  \Delta_0=\max_{i=1,2}\sup_{k\ge k_0}\left(\beta_k-\sum_{\ell=k_0}^{k-1}r_{i,\ell}\alpha_\ell\right)<\infty,
\]
we have
\[
  b-\sum_{\substack{b'\in B_i\\b'<b}}b'\le b-\sum_{\ell=k_0}^{k-1}\,\sum_{b'\in B_i\cap A_{\ell}}b'\le\beta_k-\sum_{\ell=k_0}^{k-1}r_{i,\ell}\alpha_\ell\le\Delta_0
\]
for every $b\in B_i\cap A_k$, where $i=1,2$ and $k\ge k_0$. So $\Delta(B_1),\ \Delta(B_2)<\infty$. Theorem \ref{thm:strong-3set} then implies that $A^*$ is strongly complete, and hence so is $A$.
\end{proof}

\begin{cor}\label{cor:consecutive-blocks}
Let $k_0,r\in\N$, and let $\{b_k\}_{k\ge 1}$ be a strictly increasing sequence of positive integers satisfying $b_k+2r<b_{k+1}$ for all $k\ge k_0$, 
\[\sup_{k\ge k_0}\frac{b_{k+1}}{b_k}<\infty,\]
and
\[
\sup_{k\ge k_0}\left(b_k-r\sum_{\ell=k_0}^{k-1}b_\ell\right)<\infty.
\]
Then
\[
  A=\bigcup_{k\ge k_0}\{b_k,b_k+1,\ldots,b_k+2r\}
\]
is strongly complete.
\end{cor}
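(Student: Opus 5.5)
The plan is to apply Theorem \ref{thm:ordered-blocks} with the blocks $A_k:=\{b_k,b_k+1,\ldots,b_k+2r\}$ for $k\ge k_0$, discarding the finitely many elements of $A$ below $b_{k_0}$ if necessary. Since $b_k+2r<b_{k+1}$, these blocks are disjoint and ordered, with $\max A_k<\min A_{k+1}$. Each block has $|A_k|=2r+1$ elements. We also have $\alpha_k=b_k$ and $\beta_k=b_k+2r$.

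The first step is to choose the splitting parameters. We take $r_{1,k}=r_{2,k}=r$, so that the total deletion count $r_{1,k}+r_{2,k}=2r$ in Theorem \ref{thm:ordered-blocks} plays the role of its ``$r$''. Then $|A_k|=2r+1$ meets the requirement of exceeding $2r$ by one, and each $r_{i,k}$ is positive for every $k$. The ratio condition \eqref{eq:ordered-ratio} is exactly the hypothesis $\sup b_{k+1}/b_k<\infty$. For the reservoir condition we compute
\[
\beta_k-\sum_{\ell=k_0}^{k-1}r_{i,\ell}\alpha_\ell=b_k+2r-r\sum_{\ell=k_0}^{k-1}b_\ell .
\]
By hypothesis this is bounded above uniformly in $k\ge k_0$, for both $i=1,2$.

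The remaining hypothesis is $H_1(A)=\{0\}$, which I expect to be the only point needing an argument. Each block contains consecutive integers $b_k,b_k+1$. Let $\theta\in H_1(A)$. Then $\|b_k\theta\|\to0$ and $\|(b_k+1)\theta\|\to0$. By the triangle inequality,
\[
\|\theta\|\le\|b_k\theta\|+\|(b_k+1)\theta\|\to0,
\]
so $\theta=0$. This uses only that the terms of a convergent series tend to zero along the infinitely many blocks.

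Theorem \ref{thm:ordered-blocks} then gives that $A$ is strongly complete. There is no real obstacle beyond bookkeeping. The one care point is to read the theorem's partition condition as $r_{1,k}+r_{2,k}$ equal to the constant total, here $2r$, rather than as the $r$ of the corollary.
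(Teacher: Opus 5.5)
Your proposal is correct and is essentially the paper's proof: you apply Theorem~\ref{thm:ordered-blocks} to the blocks $A_k=\{b_k,\ldots,b_k+2r\}$ with $r_{1,k}=r_{2,k}=r$, so the theorem's total is $2r$ and $|A_k|=2r+1$, and you verify $H_1(A)=\{0\}$ by the triangle inequality on the consecutive pair $b_k,b_k+1$. Your explicit checks of the ratio and reservoir hypotheses, and your reading of the theorem's condition as $r_{1,k}+r_{2,k}=2r$ (the displayed $r_{1,k}+r_{1,k}=r$ is a typo), spell out what the paper leaves implicit, as does the harmless reindexing of the blocks to start at $k_0$.
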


\begin{proof}
This follows from Theorem \ref{thm:ordered-blocks} applied to 
\[
  A_k=\{b_k,b_k+1,\ldots,b_k+2r\},
\]
once we verify $H_1(A)=\{0\}$, or equivalently, \eqref{eq:L1-divergence}. But if the series in \eqref{eq:L1-divergence} converged at some $\theta\in\T\setminus\{0\}$, then $\|b_k\theta\|\to0$ and $\|(b_k+1)\theta\|\to0$ as $k\to\infty$, so that
\[
  \|\theta\|\le\|b_k\theta\|+\|(b_k+1)\theta\|\to0
\]
as $k\to\infty$, which would imply $\theta=0$, a contradiction.
\end{proof}

Taking $r=1$ and $b_k=2^k+1$ in Corollary \ref{cor:consecutive-blocks} gives the strongly complete set
\[
  A=\bigcup_{k\ge2}\big\{2^k+1,2^k+2,2^k+3\big\}
\]
is strongly complete. However, this is not covered by Theorem \ref{thm:main} for any fixed $\rho>1$. To see this, note that
\[
  \big|A\cap[1,\rho^{k}]\big|=(3\log_2\rho)\,k+O(1),\quad\forall k\ge1.
\]
If \eqref{eq:growth} held for $A$ and some $\rho>1$, then it would give
\[\big|A\cap[1,\rho^{k}]\big|\ge M_{\rho}\,k+O(1)\]
for sufficiently large $k$. Hence $M_\rho\le 3\log_2\rho$,
which is impossible, since $M_\rho\ge2>3\log_2\rho$ for $1<\rho<2^{2/3}$, $M_\rho\ge3>3\log_2\rho$ for $2^{2/3}\le\rho<2$, and $M_\rho\ge2\rho(\rho-1)+1>3\log_2\rho$ for $\rho\ge2$. 

For a second family, fix $t>0$, $r\in\N$, and
$\alpha\in(1,r+1)$. With $b_k=\lfloor t\alpha^k\rfloor$, we have 
\[
 b_k-r\sum_{\ell<k}b_\ell
 =t\alpha^k\left(1-\frac{r}{\alpha-1}\right)+O(k),
\]
which is bounded above because $\alpha<r+1$. Hence, Corollary \ref{cor:consecutive-blocks} implies that
\[
 \bigcup_{k\ge 1}
 \{\lfloor t\alpha^k\rfloor,
   \lfloor t\alpha^k\rfloor+1,\ldots,
   \lfloor t\alpha^k\rfloor+2r\}
\]
is strongly complete. This is much weaker than Theorem \ref{thm:poly-talpha^n} that we will establish in the next section.

\section{Polynomially perturbed ray sets: Proof of Theorem \ref{thm:poly-talpha^n}}\label{S:poly-talpha^n}

We dedicate this section to a proof of Theorem \ref{thm:poly-talpha^n} on strong completeness of the polynomially perturbed single-ray set
\[
 A_{t,\alpha,\Pp}:=\N\cap\{\lfloor t\alpha^n\rfloor+P_i(n):n\in\N,1\le i\le k\},
\] 
where $t,\alpha>0$ and $\Pp=\{P_1,\ldots,P_k\}$ is a set of integer-valued polynomials. The proof does not follow directly from Theorem \ref{thm:main} or Theorem \ref{thm:ordered-blocks}; instead, it combines Graham's theorem on completeness of polynomial sequences with a paired switching argument. We mention that Burr \cite{Burr79} studied a different problem in which the polynomially perturbed sequences also have polynomial growth, whereas our sequence $\{\lfloor t\alpha^n\rfloor\}_{n\in\N}$ grows exponentially.

For every $N\in\N$ and every sequence $S=\{s_j\}_{j\ge1}$ of positive integers, define
\[
 \FS_{N}(S):=\left\{\sum_{j\in J}s_j:
     \emptyset\ne J\subseteq \{j\ge N\}\text{ finite}\right\}.
\]

We begin with the following lemma, partly inspired by \cite[Lemma 5]{Graham64a}, in which a set of disjoint pairs provides a key ingredient for the construction of thick subset sums needed for strong completeness.

\begin{lem}\label{lem:paired-complete}
Let $B\subseteq\N$ be infinite with $\Delta(B)<\infty$. For $n\in\N$, let $x_n<y_n$ be positive integers such that the pairs $\{x_n,y_n\}$ are pairwise disjoint and disjoint from $B$, and put $s_n=y_n-x_n$ and $S=\{s_n\}_{n\ge1}$. Suppose that there is some $d\in\N$ such that the following hold:
\begin{enumerate}
 \item for every $N\in\N$, the set $\FS_{N}(S)$ contains arbitrarily long arithmetic progressions with common difference $d$;
 \item for every $N\in\N$,
 \[
  \gcd\big(\{b\in B:b\ge N\}\cup\{d\}\big)=1.
 \]
\end{enumerate}
Then
\[
 A=B\cup\{x_n,y_n:n\ge1\}
\]
is strongly complete.
\end{lem}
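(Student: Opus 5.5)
Fix a finite $F\subseteq A$ and put $A'=A\setminus F$. We will show that $\FS(A')$ is cofinite by writing it as a syndetic set plus a thick set, using \cite[Lemma 1]{Graham64a}, as in Corollary \ref{cor:3set}. Deleting $F$ removes only finitely many pairs $\{x_n,y_n\}$ and finitely many elements of $B$. So we may choose $N_0$ with $x_n,y_n\notin F$ for all $n\ge N_0$, and set $B'=B\setminus F$. By \eqref{eq:Delta(S-T)} we still have $\Delta(B')<\infty$, and hypotheses (1) and (2) are unchanged for $B'$ and for pairs with index at least $N_0$.

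\textbf{Step 1 (the switching trick).} Put $X=\sum_{n\in I}x_n$ for a finite index set $I\subseteq\N_{\ge N_0}$. Choosing, for each $n\in I$, either $x_n$ or $y_n$ gives subset sums $X+\sum_{n\in J}s_n$ for every $J\subseteq I$, and these lie in $\FS(\{x_n,y_n:n\in I\})$. By hypothesis (1), for each $L$ there is a finite $I_L\subseteq\N_{\ge N_0}$ (a union of the finitely many index sets realizing the progression) such that $\FS(\{s_n:n\in I_L\})$ contains an arithmetic progression of length $L$ with difference $d$. Shifting by $X_L=\sum_{n\in I_L}x_n$, the set $\FS(\{x_n,y_n:n\ge N_0\})$ contains arithmetic progressions with difference $d$ of every length. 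We may also choose the index sets $I_L$ pairwise disjoint and with $\min I_L\to\infty$, since (1) holds for every $N$.

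\textbf{Step 2 (filling residues mod $d$ with $B'$).} By hypothesis (2) and Proposition \ref{prop:modular}, or directly, for every $N$ the set $\FS(B'\cap[N,\infty))$ meets every residue class mod $d$. Choose a finite $E\subseteq B'$ with $\FS(E)$ meeting all classes mod $d$. Then $\FS(E)$ plus a progression of length $L$ with difference $d$ contains an interval of length about $L-\max\FS(E)/d$. Hence $T:=\FS(E\cup\{x_n,y_n:n\ge N_0\})$ is thick. Note that $E$ must be taken from $B'$ and disjoint from the pairs, which holds by the disjointness assumptions.

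\textbf{Step 3 (syndetic part).} Since $\Delta(B'\setminus E)<\infty$ by \eqref{eq:Delta(S-T)}, Lemma \ref{lem:burr-erdos3.2} makes $\FS(B'\setminus E)$ syndetic. Then $T+\FS(B'\setminus E)\subseteq\FS(A')$ is cofinite, because the three pieces are disjoint. Thus $A'$ is complete for every finite $F$.

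The only delicate point is Step 2. We need $\FS(E)$ to hit all residues mod $d$ using elements of $B'$ only, because hypothesis (1) gives no control of $X_L$ mod $d$. This is exactly what (2) provides: taking the $\gcd$ of large elements of $B$ together with $d$ ensures that subset sums of a suitable finite subset cover $\Z/d\Z$. The modular covering argument from Proposition \ref{prop:modular} (the Appendix result), applied with modulus $d$, should suffice here. Once that covering is in place, the rest is routine bookkeeping.
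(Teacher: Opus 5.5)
Your proposal is correct and follows the paper's proof essentially step for step. Proposition \ref{prop:modular}, applied with the primes dividing $d$, yields a finite $E\subseteq B$ whose subset sums cover $\Z/d\Z$. The switching identity $\sum_{n\in I}x_n+\sum_{n\in J}s_n=\sum_{n\in I\setminus J}x_n+\sum_{n\in J}y_n$ turns the progressions from (1) into arbitrarily long intervals inside $\FS(E\cup\{x_n,y_n:n\ge1\})$. Finally, $\FS(B\setminus E)$ is syndetic by Lemma \ref{lem:burr-erdos3.2}, so the sum is cofinite. The only difference is cosmetic: you track the deleted finite set $F$ explicitly, discarding the finitely many pairs it touches, whereas the paper reduces at the outset to proving that $A$ itself is complete.
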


\begin{proof}
Since deleting finitely elements from $B$ and finitely many pairs $\{x_n,y_n\}$ affects neither the finiteness of $\Delta(B)$ by \eqref{eq:Delta(S-T)} nor the conditions (1) and (2), it suffices to show that $A$ is complete.

Applying Proposition \ref{prop:modular} in Appendix with $\Pp=\{p:p\mid d\}$, we see that (2) implies
\[\FS(B)\pmod*{d}=\Z/d\Z.\]
In particular, there is a finite set $E\subseteq B$ such that
\begin{equation}\label{eq:FS(E)-mod d}
\FS(E)\pmod*{d}=\Z/d\Z.
\end{equation}

Let $C=\{x_n,y_n:n\ge1\}$. We claim that
$\FS(E)+\FS(C)$, where the empty sum from $E$ is allowed, is thick. The proof is essentially the same as that of \cite[Lemma 5]{Graham64a}. Note that condition (1) supplies, for infinitely many $M\in\N$, arithmetic progressions
\[
\{a,a+d,\ldots,a+Md\}\subseteq \FS_1(S),  
\]
where $a=a_M\in\N$. Take a finite set $I\subseteq\N$ of indices such that
\begin{equation}\label{eq:progression-FS_1(S)}
\{a,a+d,\ldots,a+Md\}\subseteq\FS(\{s_n:n\in I\}),
\end{equation}
and let $X_I:=\sum_{n\in I}x_n$. If 
\[
 x\in[X_I+a+\max \FS(E),\,X_I+a+Md+\min \FS(E)],
\]
then
\[
 x-X_I\in[a+\max \FS(E),\,a+Md+\min \FS(E)].
\]
By \eqref{eq:FS(E)-mod d}, we can find $H\in\FS(E)$ and $k\in\Z$ such that
\[
 x-X_I-H=a+kd\in[a+\max \FS(E)-H,\,a+Md+\min \FS(E)-H]\subseteq[a,a+Md].
\]
According to \eqref{eq:progression-FS_1(S)}, there is a finite set $J\subseteq I$ such that 
\[x-X_I-H=a+kd=\sum_{n\in J}s_n,\]
which implies that
\[x=X_I+H+\sum_{n\in J}s_n=H+\sum_{n\in I\setminus J}x_n+\sum_{n\in J}y_n\in\FS(E)+\FS(C),\]
since the pairs $\{x_n,y_n\}$ are pairwise disjoint and disjoint from $E\subseteq B$. Hence
\[
 [X_I+a+\max \FS(E),\,X_I+a+Md+\min \FS(E)]\subseteq\FS(E)+\FS(C).
\]
This verifies our claim. 

Since $\Delta(B)<\infty$ implies $\Delta(B\setminus E)<\infty$ by \eqref{eq:Delta(S-T)}, Lemma \ref{lem:burr-erdos3.2} shows that $\FS(B\setminus E)$ is syndetic. It follows that
\[\FS(A)\supseteq\FS(B\setminus E)+\FS(E)+\FS(C)\]
is cofinite, meaning that $A$ is complete.
\end{proof}

Albeit not needed for the proof of Theorem \ref{thm:poly-talpha^n}, we record two interesting applications of Lemma \ref{lem:paired-complete}. The first concerns finite translates with primitive difference sets.

\begin{cor}\label{cor:4-point-translates}
Fix $\rho>1$, $k_0\in\N$, and a finite set $F\subseteq\Z$ such that $\gcd(F-F)=1$ and 
\[
|F|\ge \min\{u_\rho+2,2v_{\rho}\},
\]
where $u_{\rho},v_{\rho}$ are defined as in \eqref{eq:constants}. Let $\{b_k\}_{k\ge k_0}\subseteq\Z$ satisfy
$b_k+F\subseteq(\rho^k,\rho^{k+1}]$ for every $k\ge k_0$, and put
\[
 A:=\bigcup_{k\ge k_0}(b_k+F).
\]
Then $A$ is strongly complete. 
\end{cor}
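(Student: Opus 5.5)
The plan is to apply Lemma~\ref{lem:paired-complete} with $d=1$. In each chosen block $b_k+F$ we set aside one pair of elements as a pair $\{x_n,y_n\}$, and all remaining elements of $A$ go into $B$. The pairs are chosen so that every positive difference in $F-F$ occurs as a gap $s_n$ infinitely often. Then condition~(1) becomes a statement about numerical semigroups, and condition~(2) holds trivially because $d=1$. What remains is to check $\Delta(B)<\infty$ using Lemma~\ref{lem:Delta(S)}, and this is where the two alternatives in $|F|\ge\min\{u_\rho+2,2v_\rho\}$ come from.

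\emph{Choice of pairs.} Enumerate the unordered pairs $\{f,f'\}\subseteq F$ with $f<f'$ as $\pi_1,\dots,\pi_t$. Since $\gcd(F-F)=1$ forces $|F|\ge2$, we have $t\ge1$. Fix an infinite set of block indices $K\subseteq\N_{\ge k_0}$, specified in the next paragraph. Run through $k\in K$ in increasing order and assign the pairs $\pi_1,\dots,\pi_t$ cyclically. If $k$ receives $\pi=\{f,f'\}$, put $x=b_k+f$, $y=b_k+f'$, with gap $s=f'-f$. All elements of $A$ not used in pairs form $B$. The blocks $b_k+F$ lie in disjoint intervals $(\rho^k,\rho^{k+1}]$, so the pairs are pairwise disjoint and disjoint from $B$. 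Every $s\in(F-F)\cap\N$ then occurs infinitely often in $S$. Hence for each $N$, $\FS_N(S)$ contains $\{\sum_s c_s s: 0\le c_s\le L\}$ for every $L$, restricted to positive sums. Because $\gcd((F-F)\cap\N)=1$, the numerical semigroup generated by these gaps is cofinite, and taking $L$ large shows that $\FS_N(S)$ contains arbitrarily long intervals. This gives condition~(1) with $d=1$, and condition~(2) is then automatic.

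\emph{Finiteness of $\Delta(B)$.} Here we split into the two cases.
\begin{itemize}
\item If $|F|\ge u_\rho+2$, take $K=\N_{\ge k_0}$. Each interval $(\rho^k,\rho^{k+1}]$ contains $m_k\ge|F|-2\ge u_\rho\ge\rho(\rho-1)$ elements of $B$. Then \eqref{eq:m_k} holds with $r=1$, and Lemma~\ref{lem:Delta(S)} gives $\Delta(B)<\infty$.
\item If $|F|\ge 2v_\rho$, take $K$ to be the even integers $\ge k_0$. Then $B$ contains all of $b_k+F$ for odd $k$ and $|F|-2$ elements for even $k$. Hence for all large $k$,
\[
m_k+\rho\, m_{k+1}\ge(1+\rho)|F|-2\rho\ge 2(1+\rho)v_\rho-2\rho\ge 2\rho^3-2\rho\ge\rho(\rho^2-1).
\]
This is \eqref{eq:m_k} with $r=2$, and Lemma~\ref{lem:Delta(S)} again gives $\Delta(B)<\infty$.
\end{itemize}
In both cases $B$ is infinite. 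Lemma~\ref{lem:paired-complete} then shows that $A=B\cup\{x_n,y_n\}$ is strongly complete.

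The main point to get right is the choice of pairs. A single fixed pair $\{f,f'\}$ in every block would force $d=f'-f$, and then condition~(2) could fail, because a prime dividing $f'-f$ may divide all the remaining elements $b_k+g$. Cycling through all pairs avoids this by making $d=1$, using only $\gcd(F-F)=1$. The rest is the inequality bookkeeping in Lemma~\ref{lem:Delta(S)}.
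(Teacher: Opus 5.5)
Your proof is correct. For $|F|\ge u_\rho+2$ it is the paper's argument: you cycle through all doubletons of $F$ in every block, use the numerical-semigroup (Frobenius) fact to get condition (1) of Lemma~\ref{lem:paired-complete} with $d=1$, and apply Lemma~\ref{lem:Delta(S)} with $r=1$. You are also right that condition (2) is automatic once $d=1$, so the paper's check that $\gcd(B\cap[N,\infty))=1$ is superfluous.

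Your route genuinely differs in the case $|F|\ge 2v_\rho$. The paper simply invokes Theorem~\ref{thm:main}: it notes $|F|\ge 2v_\rho\ge M_\rho$ and derives $H_1(A)=\{0\}$ from $\gcd(F-F)=1$. You instead stay inside Lemma~\ref{lem:paired-complete}, take pairs only from even-indexed blocks, and verify \eqref{eq:m_k} with $r=2$.

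Your version is much more elementary. It avoids the three-component criterion entirely, with its use of Griesmer's thick-sumset lemma, the Hartman-uniform measures, countable $H_2$-spectra and the deletion lemma. It also makes the case split unnecessary. Your bound $m_k+\rho m_{k+1}\ge(1+\rho)|F|-2\rho$ is at least $\rho(\rho^2-1)$ as soon as $|F|\ge(\rho^3+\rho)/(\rho+1)=\rho^2-\rho+2-2/(\rho+1)$. That threshold lies below both $u_\rho+2$ and $2v_\rho$, so the even-block construction alone proves the corollary, sometimes under a strictly weaker hypothesis. For example, at $\rho=3/2$ it already handles $|F|=2$, whereas $\min\{u_\rho+2,2v_\rho\}=3$.

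In that case the paper's approach buys only brevity, given Theorem~\ref{thm:main}. It also illustrates that the general theorem uses nothing about the translate structure beyond $H_1(A)=\{0\}$.
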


\begin{proof}
Suppose first that $|F|\ge 2v_{\rho}$. Then $|A\cap(\rho^k,\rho^{k+1}]|=|F|\ge M_{\rho}$. By Theorem \ref{thm:main}, it suffices to verify $H_1(A)=\{0\}$. Let $\theta\in H_1(A)$. Then for every $t\in F$, we have $\|(b_k+t)\theta\|\to0$ as $k\to\infty$. Consequently, the triangle inequality gives
\[
  \|(t-t')\theta\|
  \le
  \|(b_k+t)\theta\|+\|(b_k+t')\theta\|
  \to0
\]
as $k\to\infty$ for every pair $t,t'\in F$. Hence, $\|(t-t')\theta\|=0$. Since this holds for every pair $t,t'\in F$ and $\gcd(F-F)=1$, we have $\theta=0$. Therefore $H_1(A)=\{0\}$.

Now consider the case $|F|\ge u_{\rho}+2$. Enumerate all the doubletons of $F$ as $F_1,\ldots,F_m$, where $m=\binom{|F|}{2}\ge3$. For each $k\ge k_0$, take the unique index $1\le i_k\le m$ with $i_k\equiv k\pmod*{m}$. Put
\begin{align*}
C&:=\bigcup_{k\ge k_0}(b_k+F_{i_k}),\\
B&:=\bigcup_{k\ge k_0}(b_k+F\setminus F_{i_k}),
\end{align*}
and $S:=\{s_k\}_{k\ge k_0}$ with $s_k=y_k-x_k$, where $x_k<y_k$ constitute $F_{i_k}$. 

Note that the assumption $\gcd(F-F)=1$ implies that
\[\gcd\left(\{s_k:k\ge K\}\right)=1,\quad\forall K\in\N,\]
since every positive difference in $F-F$ appears in $S$ infinitely many times. By the solution to the Frobenius coin problem, $\FS_K(S)$ contains all sufficiently large integers for every $K\in\N$.

Similarly, we have
\[\gcd\big(B\cap[N,\infty)\big)=1\]
for every $N\in\N$. Besides, since 
\[
|B\cap(\rho^k,\rho^{k+1}]|\ge u_{\rho},\quad\forall k\ge k_0,
\]
Lemma \ref{lem:Delta(S)} gives $\Delta(B)<\infty$. 

Finally, Lemma \ref{lem:paired-complete} applied to the partition $A=B\cup C$ with $d=1$ proves the corollary.
\end{proof}

\begin{rmk}
The quantity $2u_{\rho}+1$, which defines $M_{\rho}$ together with $2v_{\rho}$, is left out of the assumption on $|F|$ because it agrees with $u_{\rho}+2$ when $1<\rho\le(1+\sqrt{5})/2$ but is strictly larger when $\rho>(1+\sqrt{5})/2$.
\end{rmk}

The second application of Lemma \ref{lem:paired-complete} gives an improvement of Theorem \ref{thm:main} when the elements of $A$ have bounded gaps infinitely often.

\begin{cor}\label{cor:3-bounded-gaps}
Let $\rho>1$. A set $A\subseteq\N$ is strongly complete if it satisfies
\begin{enumerate}[label=(\roman*)]
    \item $|A\cap(\rho^k,\rho^{k+1}]|\ge u_{\rho}+1+1_{\rho>2}$ for all sufficiently large $k$;
    \item $\gcd(A\cap[N,\infty))=1$ for every $N\in\N$;
    \item assuming $A=\{a_1<a_2<\cdots\}$, 
    \[
    \liminf_{n\to\infty}(a_{n+1}-a_n)<\infty.
    \]
\end{enumerate}
\end{cor}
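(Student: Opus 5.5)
Condition (iii) gives a fixed gap value: there is some $g\in\N$ such that $a_{n+1}-a_n=g$ for infinitely many $n$. We apply Lemma \ref{lem:paired-complete} with $d=g$. The pairs $\{x_n,y_n\}$ will be a sparse subsequence of these consecutive pairs $\{a_{n},a_{n}+g\}$. Condition (1) of the lemma is then automatic: every $s_n$ equals $g$, so $\FS_N(S)$ contains $\{g,2g,\ldots,Mg\}$ for every $M$.

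The remaining task is to choose the pairs so that $B:=A\setminus\{x_n,y_n:n\ge1\}$ satisfies $\Delta(B)<\infty$ and condition (2), namely $\gcd(\{b\in B:b\ge N\}\cup\{g\})=1$ for all $N$.

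For $\Delta(B)$, select the pairs so that they lie in pairwise distinct, widely separated $\rho$-adic intervals. Concretely, pick pairs lying in intervals $(\rho^{k_j},\rho^{k_j+2}]$ with $k_{j+1}-k_j\to\infty$. Then $B$ keeps at least $u_\rho+1+1_{\rho>2}$ elements in every $\rho$-adic interval except boundedly many near each $k_j$, where it keeps at least $u_\rho-1+1_{\rho>2}$.

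\textbf{Main obstacle: the $\Delta(B)$ bound.} Lemma \ref{lem:Delta(S)} cannot be applied blindly here, since the hypothesis (\ref{eq:m_k}) with $r=1$ needs $m_k\ge\rho(\rho-1)$ in every interval. I would rerun its proof directly. The tail sum $\sum_{t<s}t$ computed with $m_\ell\ge u_\rho+1\ge\rho(\rho-1)+1$ carries an excess of order $\rho^{k}/(\rho-1)$. This excess should absorb the deficit of at most two elements (each of size at most $\rho^{k_j+2}$) lost near $k_j$, provided the excess accumulated from intervals since the previous deletion dominates. That is the role of the large separation together with the extra $1_{\rho>2}$.

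I would estimate it as follows. For $s\in(\rho^k,\rho^{k+1}]$, use $\sum_{\ell<k}m_\ell\rho^\ell$, where $m_\ell$ is the count of elements of $B$ in $(\rho^\ell,\rho^{\ell+1}]$. Subtract the contributions lost at the deleted intervals, which are at most $2\rho^{k_j+2}$ from the most recent deletion and geometrically smaller from earlier ones. Then check the result is at least $\rho^{k+1}-O(1)$. This requires comparing $(u_\rho+1)\rho^k/(\rho-1)$ against $\rho^{k+1}$ plus the deficit when $k$ is near $k_j$.

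If this numerical check fails for $k$ immediately after $k_j$, I would instead take each pair from an interval in which $A$ has surplus elements. Failing that, I would take pairs whose elements are not forced by the counting, for instance lying near the bottom of an interval so they are small.

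\textbf{Condition (2).} By (ii), for each $N$ the set $A\cap[N,\infty)$ has gcd $1$. Hence for each prime $p\mid g$ there are infinitely many $a\in A$ with $p\nmid a$. Since we remove only a sparse family of pairs, we can arrange that each such prime keeps infinitely many non-multiples of $p$ in $B$. To do this, choose the $k_j$ inductively, avoiding a pre-reserved infinite set of witnesses for each of the finitely many primes $p\mid g$. Then every element of $\gcd(\{b\in B:b\ge N\}\cup\{g\})$ divides $g$ and is coprime to each $p\mid g$, so the gcd is $1$.

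Finally, $B$ is infinite and disjoint from the pairs, so Lemma \ref{lem:paired-complete} applies and gives strong completeness of $A$.
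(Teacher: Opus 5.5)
Your architecture matches the paper's. You take a recurring gap $g$ from (iii), apply Lemma \ref{lem:paired-complete} with $d=g$ and every $s_n=g$, and set $B=A\setminus C$ for a sparse family $C$ of such pairs. Your handling of condition (2) is a valid alternative, provided witnesses and pairs are chosen interleaved: pick a witness beyond the last pair, then a pair beyond that witness. Reserving the witnesses in advance could block every pair. The paper instead takes all pairs $\{a_n,a_n+d\}$ with $a_n$ in one residue class $r\pmod{d}$ and keeps an infinite twin family of them inside $B$. Then a prime $p\mid d$ dividing the tail of $B$ divides $r$, hence all of $C$, contradicting (ii).

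The genuine gap is the $\Delta(B)$ bound, which you leave as a check with contingencies. For $\rho\ne2$ the check succeeds with $k_{j+1}-k_j\to\infty$, since the margin right after a deletion is $\bigl(\frac{(1+1_{\rho>2})\rho}{\rho-1}-2\bigr)\rho^{k_j}>0$. At $\rho=2$, the Erd\H{o}s case, it fails for pairs lying inside a single dyadic interval, and no amount of separation repairs it. Here $u_2=2$, so $B$ keeps $3$ elements per interval away from deletions. For $b\in(2^{k_j+1},2^{k_j+2}]$ your estimate reads
\[
 b-\sum_{\substack{b'\in B\\ b'<b}}b'\le 2^{k_j+2}-3\bigl(2^{k_j+1}-2^{k_0}\bigr)+2\cdot 2^{k_j}+2\sum_{i<j}2^{k_i}=2\sum_{i<j}2^{k_i}+O(1).
\]
The excess accumulated since the previous deletion only ties with the current deficit $2\cdot 2^{k_j}$, so earlier deficits are never absorbed.

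This is not merely a lossy estimate. Take $A\cap(2^\ell,2^{\ell+1}]=\{2^\ell+1,2^\ell+2,2^\ell+3\}$, except for $\ell$ in a very sparse set $T$, where it is $\{2^{\ell+1}-2,2^{\ell+1}-1,2^{\ell+1}\}$. For each $t\in T$, delete the bottom pairs at $k_{j-1}=t-L$ and $k_j=t-1$, with $L\to\infty$ slowly; this obeys your separation rule. Then $b=2^{t+1}-2$ satisfies $b-\sum_{b'<b}b'\ge 2^{t-L+1}-O(2^{t'}+t)\to\infty$, where $t'$ is the previous element of $T$. Your fallbacks do not rescue this. Surplus intervals need not exist, and you do not control where the gap-$g$ pairs sit; here they already lie at the bottom.

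What is missing is integrality combined with much stronger sparsity. When $\rho=2$, every element of $(2^\ell,2^{\ell+1}]$ is at least $2^\ell+1$, which contributes an extra negative term of order $3k$. Choosing the deletion indices super-exponentially sparse, with $3(\ell_j+1)\ge 2\sum_{i<j}2^{\ell_i}+2j$ (the paper's \eqref{eq:l_j}), then makes the bound uniform. The paper also separates the case where only finitely many selected pairs lie inside a single interval. In that case each interval loses at most one element, so $|B\cap(\rho^\ell,\rho^{\ell+1}]|\ge u_\rho+1_{\rho>2}\ge\rho(\rho-1)$, and Lemma \ref{lem:Delta(S)} applies directly.
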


\begin{proof}
Let $I_k=(\rho^k,\rho^{k+1}]$ for $k\ge1$. By (iii), some fixed $d\in\N$ occurs infinitely often as the gap between consecutive elements of $A$. Let $k_0\in\N$ be such that $\rho^{k_0}>d$ and (i) holds for every $k\ge k_0$. There is some residue class $r\pmod*{d}$ for which there are infinitely many pairs $a_n,a_{n+1}=a_n+d$ with $a_n>\rho^{k_0}$ and $a_n\equiv r\pmod{d}$. Each of such pairs $a_n,a_{n+1}$ either lies within some $I_k$ or crosses a boundary in the sense that $a_n\in I_k$ and $a_{n+1}\in I_{k+1}$ for some $k\ge k_0$. Select an infinite family $\Ff$ of these pairs, making sure that each $(\rho^k,\rho^{k+1}]$ intersects at most one pair. Further split $\Ff$ into two infinite subfamilies $C$ and $D$ and put $B:=A\setminus C$. We will make this split slightly more specific below in one place.

It is evident that the sequence $S$ of the pair differences in $C$ is just an infinite string of the same number $d$. Thus $\FS_N(S)=d\N$ for every $N\in\N$.

It is also easy to see that for every $N\in\N$,
 \[
  \gcd\big(\{b\in B:b\ge N\}\cup\{d\}\big)=1.
 \]
Indeed, suppose that a prime $p\mid d$ divides every element of $B\cap[N,\infty)$. Since $D\subseteq B$, we have $p\mid r$. So $p$ also divides every element in $C$. It follows that $p$
divides every element of $A\cap[N,\infty)$, contradicting (ii).

It remains to show $\Delta(B)<\infty$. Suppose that $b\in B\cap(\rho^k,\rho^{k+1}]$ for some $k\ge k_0$. Write $\N_{\ge k_0}=E_1\cup E_2$, where
\begin{align*}
E_1&:=\big\{\ell\ge k_0: \big|(\rho^{\ell},\rho^{\ell+1}]\cap C\big|\le1\big\},\\
E_2&:=\big\{\ell\ge k_0: \big|(\rho^{\ell},\rho^{\ell+1}]\cap C\big|=2\big\}.
\end{align*}
By (i), 
\[
|B\cap(\rho^{\ell},\rho^{\ell+1}]|\ge u_{\rho}+1+1_{\rho>2}-i\ge \rho(\rho-1)+1+1_{\rho>2}-i
\]
for each $\ell\in E_i$. 

If $\Ff$ contains only finitely many pairs lying within some $I_k$, the split can be arbitrary. Since $E_2$ is finite, we have
\[
 b-\sum_{\substack{b'\in B\\b'<b}}b'
 \le\rho^{k+1}-\rho(\rho-1)\sum_{\ell=k_0}^{k-1}\rho^\ell+O_{k_0,\rho}(1)\ll_{k_0,\rho}1.
\]
If $\Ff$ contains infinitely many pairs lying within some $I_k$, it is possible to choose a split with the additional property that $C$ only contains pairs lying within some $I_k$ and is so sparse that when $E_2=\{\ell_{j}\}_{j\ge1}$ is expressed as a strictly increasing sequence, 
\begin{equation}\label{eq:l_j}
 \left(\frac{(1+1_{\rho>2})\rho}{\rho-1}-2\right)\rho^{\ell_j}+1_{\rho\in\N}(u_{\rho}+1+1_{\rho>2})(\ell_j+1)\ge 2\sum_{i=1}^{j-1}\rho^{\ell_i}+2j1_{\rho\in\N},
 \qquad\forall j\ge1,
\end{equation}
which is possible because at least one of the coefficients of $\rho^{\ell_j}$ and $\ell_j+1$ is positive. It follows that
\begin{align*}
b-\sum_{\substack{t\in B\\t<b}}t&\le \rho^{k+1}
   -(u_{\rho}+1+1_{\rho>2})\sum_{\substack{\ell=k_0\\\ell\ne\ell_j}}^{k-1}(\rho^{\ell_j}+1_{\rho\in\N})
   -(u_{\rho}-1+1_{\rho>2})\sum_{k_0\le \ell_j<k}(\rho^{\ell_j}+1_{\rho\in\N})\\
&\le \rho^{k+1}
   -(u_{\rho}+1+1_{\rho>2})\sum_{\ell=k_0}^{k-1}(\rho^{\ell_j}+1_{\rho\in\N})
   +2\sum_{k_0\le \ell_j<k}(\rho^{\ell_j}+1_{\rho\in\N})\\
&\le c_0-\left(\frac{u_{\rho}+1+1_{\rho>2}}{\rho-1}-\rho\right)\rho^k-1_{\rho\in\N}(u_{\rho}+1+1_{\rho>2})k+2\sum_{k_0\le \ell_j<k}(\rho^{\ell_j}+1_{\rho\in\N})\\
&\le c_0-\frac{1+1_{\rho>2}}{\rho-1}\rho^k-1_{\rho\in\N}(u_{\rho}+1+1_{\rho>2})k+2\sum_{k_0\le \ell_j<k}(\rho^{\ell_j}+1_{\rho\in\N}),
\end{align*}
where 
\[
c_0:=\frac{u_{\rho}+1+1_{\rho>2}}{\rho-1}\rho^{k_0}+1_{\rho\in\N}(u_{\rho}+1+1_{\rho>2})k_0.
\]
The last expression above decreases with $k$ except possibly at some $k=\ell_{j_0}+1\ge k_0$, in which case, it is at most
\[
 c_0-\left(\frac{(1+1_{\rho>2})\rho}{\rho-1}-2\right)\rho^{\ell_{j_0}}-1_{\rho\in\N}(u_{\rho}+1+1_{\rho>2})(\ell_{j_0}+1)+2\sum_{j=1}^{j_0-1}\rho^{\ell_j}+2j_01_{\rho\in\N}\le c_0
\]
by \eqref{eq:l_j}. Hence $\Delta(B)<\infty$.

Thus all hypotheses of Lemma \ref{lem:paired-complete} hold, proving
strong completeness of $A$.
\end{proof}

Now we deduce Theorem \ref{thm:poly-talpha^n} from Lemma \ref{lem:paired-complete}. Recall that $\operatorname{Int}(\Z)$ denotes the ring of integer-valued polynomials and
\[
 \alpha_k:=\frac{k-1+\sqrt{(k-1)^2+8}}{2}
\]
for each $k\ge 2$. Observe that every $P\in\operatorname{Int}(\Z)$ is periodic modulo every $q\in\N$, since if $c\in\N$ makes $cP\in\Z[x]$, then $c(P(n+cq)-P(n))$ is divisible by $cq$ for every integer $n$.

\begin{proof}[Proof of Theorem \ref{thm:poly-talpha^n}]
Fix $t>0$, $1<\alpha<\alpha_k$, and $\Pp=\{P_1,\ldots,P_k\}\subseteq\operatorname{Int}(\Z)$, where $k\ge2$. Put $a_n:=\lfloor t\alpha^n\rfloor$ for $n\in\N$ and recall
\[
 d_{\Pp}:=\gcd\left(\{P_i(n)-P_j(n):n\in\Z,1\le i<j\le k\}\right).
\]

The implication (1)$\implies$(2) is immediate, since every strongly complete set has trivial $H_1$-spectrum as discussed in the introduction.

Next, we prove the equivalence of (2) and (3). Let
$\theta\in H_1(A_{t,\alpha,\Pp})$. As $n\to\infty$, 
\[
 \|(a_n+P_i(n))\theta\|\to0,\quad \forall 1\le i\le k,
\]
and hence for every distinct pair $i,j$, $\|(P_i(n)-P_j(n))\theta\|\to0$ as $n\to\infty$. This is impossible unless every $P_i$ is constant or $\theta\in\Q$. In either case, $\theta=a/b$ for some $a\in\Z$ and $b\in\N$ with $\gcd(a,b)=1$. Then for every distinct pair $i,j$, we have $b\mid a_n+P_i(n)$ and $b\mid P_i(n)-P_j(n)$ for all sufficiently large $n$. Since every $P_i$ is periodic modulo $b$, the second divisibility holds for every integer $n$, and hence $b\mid d_{\Pp}$. Now, if (3) holds, then $b=1$, and hence $\theta=0$ in $\T$, proving $H_1(A_{t,\alpha,\Pp})=\{0\}$. Conversely, if there is some prime $p\mid d_{\Pp}$ such that $p\mid a_n+P_1(n)$ for all sufficiently large $n$, then the same divisibility condition holds for $a_n+P_i(n)$ for every $1\le i\le k$, which implies that $1/p\in H_1(A_{t,\alpha,\Pp})\setminus\{0\}$. This verifies (2)$\iff$(3).  

It remains to prove (2)$\implies$(1). Let $Q:=P_1-P_2$ and
\[g:=\gcd(\{Q(n):n\in\Z\}).\]
Let $n_0\in\N$ and denote by $p_1,\ldots,p_{\ell}$ the distinct prime divisors of $g$. Condition (2) implies that for each $1\le m\le \ell$, there is some index $1\le i_m\le \ell$ such that
\[
 E_m:=\{n\ge n_0:p_m\nmid a_n+P_{i_m}(n)\}
\]
is infinite, since otherwise we would have $1/p_m\in H_1(A_{t,\alpha,\Pp})\setminus\{0\}$ for some $m$. Choose an integer $M>\max\{\ell,1\}$ so large that
\begin{equation}\label{eq:choice-m}
 \frac{k}{\alpha-1}-\frac{2}{\alpha(1-\alpha^{-M})}>1;
\end{equation}
such a choice is possible because when $1<\alpha<\alpha_k$, the expression on the left-hand side tends to 
\[ \frac{k}{\alpha-1}-\frac{2}{\alpha}=\frac{-\alpha^2+(k-1)\alpha+2}{\alpha(\alpha-1)}+1>1\]
as $M\to\infty$. For each $1\le m\le\ell$, let $c_m\pmod*{M}$ be a residue class to which infinitely many members of $E_m$ belong. Choose a residue class $r\pmod*{M}$ different from all the $c_m$, and put
\[
 R:=\{n\ge n_0:n\equiv r\pmod*{M}\}.
\]
Then $E_m\setminus R$ is infinite for every $1\le m\le \ell$.  

To show that $A_{t,\alpha,\Pp}$ is strongly complete, it suffices to prove that
\[A_{t,\alpha,\Pp}^*:=\bigcup_{i=1}^k\{a_n+P_i(n):n\ge n_0\}\]
is strongly complete, where $n_0$ is chosen so large that this is a partition of $A_{t,\alpha,\Pp}^*\subseteq\N$ such that the elements in each of its components are strictly increasing with $n$. The existence of such an $n_0$ is assured by the estimate $P(n)=O(n^D)=o(\alpha^n)$ for every $P\in\Pp$, where $D=\max_{P\in\Pp}\deg P$. In preparation for our application of Lemma \ref{lem:paired-complete}, define 
\[C:=\bigcup_{n\in R}\{x_n,y_n\}\]
and $B:=A_{t,\alpha,\Pp}^*\setminus C$, where 
\begin{align*}
x_n&:=\min\{a_n+P_1(n),a_n+P_2(n)\},\\
y_n&:=\max\{a_n+P_1(n),a_n+P_2(n)\},
\end{align*}
satisfy $y_n-x_n=|Q(n)|$. Then $A_{t,\alpha,\Pp}^*=B\cup C$ is a partition.

Let
\[
 h:=\gcd\{Q(n):n\equiv r\pmod*{M}\}=\gcd\{Q(r+Mt):t\in\mathbb Z\}.
\]
Choose a large $t_0\in\N$ and some $\epsilon\in\{-1,1\}$ such that $\epsilon Q(r+M(t+t_0))>0$ for all $t\ge0$. The polynomial
\[
Q^*(t):=\frac{\epsilon Q(r+M(t+t_0))}{h}\in\operatorname{Int}(\Z)
\]
is positive for $t\ge0$ and has positive leading coefficient and trivial fixed divisor. It follows that for every $T\ge 0$, $\FS_T(\{Q^*(t)\}_{t\ge0})$ contains all sufficiently large integers; see \cite{RothSzekeres54} and \cite[Theorem 1 and Concluding remarks]{Graham64a}. As a consequence, given any $N\in\N$, $\FS_N(\{y_n-x_n\}_{n\in R})=\FS_N(\{|Q(n)|\}_{n\in R})$ contains all sufficiently large multiples of $h$. So condition (1) of Lemma \ref{lem:paired-complete} is fulfilled with $d=h$.

We next verify condition (2) of Lemma \ref{lem:paired-complete}. Let $p\mid h$ be prime. If $p\mid g$, then $p=p_m$ for some $1\le m\le\ell$. Since $E_m\setminus R$ is infinite, $p\nmid a_n+P_{i_m}$ for infinitely many $n\notin R$. If $p\nmid g$, periodicity of $Q$ modulo $p$ gives infinitely many $n$ for which $p\nmid Q(n)$. Since $p\mid h$, these $n$ all lie outside $R$. Moreover, for each of these $n$ at least one of $a_n+P_1(n),a_n+P_2(n)$ is indivisible
by $p$. This proves
\[
 \gcd\big(\{b\in B:b\ge N\}\cup\{g\}\big)=1,\quad\forall N\in\N.
\]

Finally, we show that $\Delta(B)<\infty$. Let $b\in \{a_n+P_1(n),\ldots,a_n+P_k(n)\}$ for some $n\ge n_0$. Since
\[
k\sum_{s=n_0}^{n-1}\alpha^s-2\sum_{\substack{s=n_0\\n\in R}}^{n-1}\alpha^s
\ge k\sum_{s=n_0}^{n-1}\alpha^s-2\sum_{s\ge0}\alpha^{n-1-sM}\ge\alpha^n\left(\frac{k}{\alpha-1}
       -\frac{2}{\alpha(1-\alpha^{-M})}\right)+O(1),
\]
we have
\begin{align*}
b-\sum_{\substack{b'\in B\\b'<b}}b'&\le b-\left(\sum_{i=1}^k\sum_{s=n_0}^{n-1}(a_s+P_i(s))-\sum_{\substack{s=n_0\\s\in R}}^{n-1}(x_s+y_s)\right)\\
&\le t\alpha^n
       \left[1-\left(\frac{k}{\alpha-1}
       -\frac{2}{\alpha(1-\alpha^{-m})}\right)\right]
       +O\big(n^{D+1}\big),
\end{align*}
which is bounded above for $n\ge n_0$ by \eqref{eq:choice-m}. Therefore $\Delta(B)<\infty$.

Hence, Lemma \ref{lem:paired-complete} applied to $A_{t,\alpha,\Pp}^*=B\cup C$ proves (1).
\end{proof}

\begin{rmk}\label{rmk:H_q(A_{t,alpha,P})}
The proof of (2)$\iff$(3) above works perfectly with $H_q(A_{t,\alpha,\Pp})$ in place of $H_1(A_{t,\alpha,\Pp})$ for any $q>0$. In fact, it shows that if we define
\[
 d_N:=\gcd\big(\{d_{\Pp}\}\cup\{\lfloor t\alpha^n\rfloor+P_1(n):n\ge N\}\big)
\]
for each $N\in\N$, which is nondecreasing under divisibility and bounded by $d_{\Pp}$, and is therefore eventually equal to some constant $d_\infty\in\N$, then
\[
 H_q(A_{t,\alpha,\Pp})=\frac{1}{d_\infty}\Z/\Z\subseteq\T.
\]
\end{rmk}

\begin{rmk}
Theorem \ref{thm:poly-talpha^n} is false if $\alpha>k+1$. This follows from the simple observation that $\alpha>k+1$ leads to
\[
 \sum_{i-1}^k\sum_{s=1}^{n-1}(\lfloor t\alpha^s\rfloor+P_i(s))=\left(\frac{k}{\alpha-1}+o(1)\right)t\alpha^n
 <\min_{1\le i\le k}\{\lfloor t\alpha^n\rfloor+P_i(n)\}
\]
for all sufficiently large $n$. It would be interesting to determine the exact values of $t>0$ and $\alpha\in[\alpha_k, k+1]$ for which $A_{t,\alpha,\Pp}$ is strongly complete.
\end{rmk}

\section{Mixed power sets: Proof of Theorem \ref{thm:BS-3-sets}}\label{S:power-sets}
We close our discussion with a short proof of Theorem \ref{thm:BS-3-sets} based on Theorem \ref{thm:strong-3set}. Fix $k\in\N$ and a nonempty finite set $D\subseteq\N_{\ge2}$ with no two elements being powers of the same integer. Suppose that $D=D_1\cup D_2\cup C$ is a partition of $D$ into nonempty subsets such that $\gcd(C)=1$ and \eqref{eq:sum-D_i} holds for $i=1,2$. We show that 
\[
 \Pp_k(D)=\bigcup_{d\in D}\{d^n:n\ge k\}
\]
is strongly complete. 

Let $\Bb_i:=\Pp_k(D_i)$ for $i=1,2$ and put $\Cc:=\Pp_k(C)$.
Our hypothesis on $D$ guarantees that $\Pp_k(D)=\Bb_1\cup\Bb_2\cup\Cc$ is a partition. Moreover, if $b\in \Bb_i$, then we have
\[
 \sum_{\substack{n\ge k\\ d^n<b}}d^n
 \ge \frac{b-d^k}{d-1},\qquad\forall d\in D_i.
\]
Indeed, this is trivial when the sum on the left-hand side is empty. When the sum is nonempty, it is equal to
\[
\sum_{n=k}^{m}d^n=\frac{d^{m+1}-d^k}{d-1}\ge\frac{b-d^k}{d-1},
\]
where $m$ is the largest integer for which $d^m<b$. It follows by \eqref{eq:sum-D_i} that
\[
 b-\sum_{\substack{b'\in \Bb_i\\b'<b}}b'\le b\left(1-\sum_{d\in D_i}\frac1{d-1}\right)
   +\sum_{d\in D_i}\frac{d^k}{d-1}
 \le \sum_{d\in D_i}\frac{d^k}{d-1}.
\]
We have hence proved $\Delta(\Bb_i)<\infty$ for $i=1,2$.

Let $c_0=\min C$. Then every $c\in \Cc$ satisfies $c\ge c_0^k$, and the least element in $\Cc$ that is strictly greater than $c$ is at most $c_0c$. Consequently, when enumerated as a strictly increasing sequence, $\Cc$ has bounded successive ratios, and Lemma \ref{lem:bounded-ratio} then implies that $H_2(\Cc)$ is countable.

We next show that $H_1(\Cc)=\{0\}$. Suppose that
$\theta\in H_1(\Cc)$. Then $\|c^n\theta\|\to0$ as $n\to\infty$ for every $c\in C$. Let $r_n(c)\in\Z$ be such that $\|c^n\theta\|=|c^n\theta-r_n(c)|$. Then
\[
 |r_{n+1}(c)-cr_n(c)|
 \le \| c^{n+1}\theta\|
       +c\| c^n\theta\|<1
\]
for all sufficiently large $n$, and hence $r_{n+1}(c)=cr_n(c)$ for all sufficiently large $n$. This implies that $\|c^{n+1}\theta\|=c\|c^n\theta\|\to0$ eventually. So there exists some $N_c\in\N$ such that $c^n\theta=0$ in $\T$ for all $n\ge N_c$. Necessarily, $\theta\in\Q$. Suppose that $\theta=a/q$ for some $q\in\N$ and $a\in\Z$ with $\gcd(a,q)=1$. Then $q\mid c^{N_c}$ for every $c\in C$.
Hence, every prime divisor of $q$ divides every member of $C$. Since $\gcd(C)=1$, we have $q=1$ and therefore $\theta=0$ in $\T$, as desired.

Finally, Theorem \ref{thm:strong-3set} applied to the partition $\Pp_k(D)=\Bb_1\cup\Bb_2\cup\Cc$ implies that $\Pp_k(D)$ is strongly complete.

\section{Appendix}

In this appendix we show that \eqref{eq:C-modular} follows from \eqref{eq:rational-theta}, or equivalently, from \eqref{eq:gcd-tail}. This implication is contained in the following stronger result.

\begin{prop}\label{prop:modular}
Let $C\subseteq\N$ be an infinite set and define 
\[C_N(q):=(C\cap[N,\infty))\cup\{q\}\] 
for any $N,q\in\N$. Let $\Pp$ be a set of primes and put
\[\N_{\Pp}:=\{q\in\N:p\mid q\implies p\in\Pp\}.\]
Then $\gcd(C_N(q))=1$ for every $N\in\N$ and every $q\in\N_{\Pp}$ if and only if 
\[
  \FS(C\setminus D)+q\Z=\Z
\]
for every finite $D\subseteq C$ and every $q\in\N_{\Pp}$. 
\end{prop}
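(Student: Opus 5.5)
We prove the two implications separately. The ``if'' direction is short. Suppose $\gcd(C_N(q))=g>1$ for some $N$ and some $q\in\N_{\Pp}$. Take a prime $p\mid g$; since $p\mid q$, we have $p\in\Pp$, so $p\in\N_{\Pp}$. Let $D=C\cap[1,N)$, which is finite. Every element of $C\setminus D$ is divisible by $p$, so $\FS(C\setminus D)+p\Z\subseteq p\Z\ne\Z$. This contradicts the hypothesis applied with modulus $p$.

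For the ``only if'' direction, fix a finite $D$ and $q\in\N_{\Pp}$. Choose $N$ larger than $\max D$. It then suffices to show that $\FS(C\cap[N,\infty))$ hits every residue class mod $q$, for every $N$. We argue by induction on $q$ through its prime factorization, or more directly via subgroup growth in $\Z/q\Z$.

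\textbf{Step 1.} Set $C'=C\cap[N,\infty)$, which is still infinite. Since $C'$ is infinite, some residue $c\pmod*{q}$ is attained by infinitely many elements of $C'$. Using $q$ such elements, all $\equiv c$, their subset sums give every multiple $jc$ for $1\le j\le q$. In particular they cover the whole subgroup $\langle c\rangle$, including $0$.

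\textbf{Step 2.} Iterate this. Let $G_0=\{0\}$. Having found disjoint finite sets $F_1,\dots,F_i\subseteq C'$ whose subset sums (the empty sum allowed) cover a subgroup $G_i$ of $\Z/q\Z$, consider the quotient $(\Z/q\Z)/G_i\cong\Z/q_i\Z$ with $q_i\mid q$. If $q_i>1$, pick a prime $p\mid q_i$. Then $p\mid q$, so $p\in\Pp$, and $p\in\N_{\Pp}$. The hypothesis $\gcd(C_M(p))=1$ for every $M$ gives infinitely many elements of $C'$ that are not divisible by $p$. The residues of these elements are therefore not in the subgroup $p\Z/q\Z\supseteq G_i$; here I should double check that $G_i\subseteq p\Z/q\Z$, which holds because $p\mid q_i=[\Z/q\Z:G_i]$ means $G_i\le p\Z/q\Z$.

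Among these elements beyond $\max(F_1\cup\dots\cup F_i)$, pigeonhole gives infinitely many with one common residue $c\notin G_i$. Take $q$ of them as $F_{i+1}$. The subset sums of $F_1\cup\dots\cup F_{i+1}$ then cover $G_i+\{0,c,\dots,(q-1)c\}=G_i+\langle c\rangle=:G_{i+1}$, which strictly contains $G_i$. Since the chain of subgroups is finite, the process ends with $G=\Z/q\Z$.

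\textbf{Step 3.} The nonempty-sum requirement in $\FS$ remains. Every residue can be written as $r=s+jq\cdot c'$ by adding a full block of $q$ elements that are all $\equiv c'$. Equivalently, reserve one extra block $F_0$ of $q$ congruent elements; its full sum is $\equiv 0$ and is nonempty. This block is added to any representation, so every residue is hit by a nonempty sum.

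The main obstacle I expect is Step 2. The needed fact is that a prime dividing the index of $G_i$ is in $\Pp$, and that elements not divisible by that prime escape $G_i$. The point that makes this work is that $G_i\le p\Z/q\Z$ whenever $p$ divides the index. Everything else is pigeonhole.
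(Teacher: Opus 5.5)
Your proof is correct. The ``if'' direction is the paper's argument; the paper uses the modulus $d=\gcd(C_N(q))\in\N_{\Pp}$ itself rather than a prime $p\mid d$, which is immaterial.

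For the ``only if'' direction you take a different route. The paper's argument runs as follows:
\begin{itemize}
\item It first observes that every tail $C\cap[N,\infty)$ generates $\Z/q\Z$, since a proper subgroup has the form $d\Z/q\Z$ with $1<d\mid q$.
\item It lets $S_n$ be the set of subset sums (empty sum allowed) of the first $n$ elements of $C$ modulo $q$. This increasing chain stabilizes at some $S$ with $S+c_n=S$ for all large $n$.
\item Hence the stabilizer of $S$ contains the residues of a tail, so it equals $\Z/q\Z$. Since $0\in S$, this forces $S=\Z/q\Z$.
\item Finally, $0$ is obtained as a nonempty sum by pigeonholing the partial sums $0,c_1,c_1+c_2,\dots,c_1+\dots+c_q$.
\end{itemize}

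You instead grow the covered subgroup explicitly, using blocks of $q$ mutually congruent elements to capture a whole cyclic subgroup $\langle c\rangle$ at each step. Your observation that $G_i\le p\Z/q\Z$ for a prime $p$ dividing the index is the same subgroup fact the paper uses to get generation. You apply it one prime at a time to produce an escaping residue. Your reserved block $F_0$, whose total is $\equiv 0$ modulo $q$, replaces the paper's pigeonhole for the nonempty-sum issue.

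The paper's version is shorter and never needs infinitely many tail elements in a single residue class. Yours is more constructive. It shows that all residues are already represented by nonempty sums drawn from at most $q(\Omega(q)+1)$ tail elements, where $\Omega(q)$ is the number of prime factors of $q$ counted with multiplicity; this bounds the length of your chain $G_0<G_1<\cdots$.

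The one thing you should state explicitly is that $F_0$ is chosen disjoint from $F_1,\dots,F_m$. For example, take it from elements beyond $\max(F_1\cup\dots\cup F_m)$, which pigeonhole again allows.
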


\begin{proof}
The sufficiency part follows from the observation that if $\gcd(C_N(q))=d$, then $d\in\N_{\Pp}$ and $\FS(C\setminus D)\subseteq d\Z$ with $D=C\cap[1,N)$.

For the necessity part, it suffices to consider $C$ itself, for if $D\subseteq C$ is finite, then every tail of $C\setminus D$ has a trivial gcd with $q$ as well. The case $q=1$ is trivial. Fix $q\in\N_{\Pp}\cap[2,\infty)$ and write $G=\Z/q\Z$. Then for every $N\in\N$, $C\cap[N,\infty)$ modulo $q$ generates $G$, since otherwise the generated subgroup would have the form $d\Z/q\Z$ for some divisor $d>1$ of
$q$, and so every element of $C\cap[N,\infty)$ would be divisible by $d$, contradicting $\gcd(C_N(q))=1$.

We may enumerate elements of $C$ increasingly as $c_1<c_2<\cdots$. For each $n\in\N$, let $S_n\subseteq G$ be the set of subset sums of $c_1,\ldots,c_n$ modulo $q$, including the empty set sum $0$. Since $G$ is finite, the increasing sequence
\[
  S_1\subseteq S_2\subseteq\cdots\subseteq G
\]
eventually stabilizes. Thus there are $n_0\in\N$ and $S\subseteq G$ such that $S_n=S$ for every $n\ge n_0$. If $n>n_0$, then
\[
  S_n=S_{n-1}\cup(S_{n-1}+c_n),
\]
so $S+c_n\subseteq S$. Since $|S+c_n|=|S|$, we have $S+c_n=S$ for every $n>n_0$. Therefore, the stabilizer
\[
  G_S=\{g\in G:S+g=S\}
\]
is a subgroup of $G$ containing $C\cap[c_{n_0+1},\infty)$ modulo $q$. Since $C\cap[c_{n_0+1},\infty)$ modulo $q$ generates $G$, we conclude that $G_S=G$. As $0\in S$, this forces $S=G$. In particular, every nonzero $g\in G$ has a representation as a nonempty subset sum of $C$ modulo $q$.

Finally, we show that $0\in G$ can also be represented as a
nonempty subset sum of $C$ modulo $q$. To see this, choose any $q$ distinct elements of $C$ and consider the $q+1$ partial sums 
\[
  0,\ c_1,\ c_1+c_2,\ldots,\ c_1+\cdots+c_q
  \pmod*  q.
\]
By the pigeonhole principle, two of them are congruent, and their difference, which is a nonempty sum of distinct elements, is congruent to $0$ modulo $q$. 

This completes the proof.
\end{proof}

\medskip
\section*{AI disclosure}
ChatGPT 5.6 was used for proofreading the manuscript. It also suggested a core idea underlying the current shorter proof of Lemma \ref{lem:countable-deletion} which replaced the author's original probabilistic argument. The author takes full responsibility for the contents of the paper.

\end{document}